\newtheorem{Theorem}{Theorem}[section]
\newtheorem{Lemma}[Theorem]{Lemma}
\newtheorem{Proposition}[Theorem]{Proposition}
\newtheorem{Remark}[Theorem]{Remark}
\newtheorem{Example}[Theorem]{Example}
\newtheorem{Definition}[Theorem]{Definition}
\newtheorem{Question}[Theorem]{Question}
\newtheorem{Observation}[Theorem]{Observation}
\newtheorem{Notation}[Theorem]{Notation}
\def\fka{{\frak a}}
\def\fkb{{\frak b}}
\def\fkc{{\frak c}}
\def\fkm{{\frak m}}
\def\opn#1#2{\def#1{\operatorname{#2}}}
\opn\Spec{Spec}
\opn\Supp{Supp}
\opn\supp{supp}
\opn\Max{Max}
\opn\max{max}
\opn\Min{Min}
\opn\min{min}
\opn\Ass{Ass}
\opn\Assh{Assh}
\opn\Ann{Ann}
\opn\depth{depth}
\opn\rank{rank}
\opn\Mat{Mat}
\opn\Tot{Tot}
\opn\Sym{Sym}
\def\Rees{{\mathcal R}}
\opn\ord{ord}
\opn\div{div}
\opn\Div{Div}
\opn\cl{cl}
\opn\Cl{Cl}
\opn\Ker{Ker}
\opn\Coker{Coker}
\opn\Im{Im}
\opn\Hom{Hom}
\opn\Tor{Tor}
\opn\Ext{Ext}
\opn\End{End}
\opn\Fitt{Fitt}
\opn\Aut{Aut}
\opn\id{id}
\opn\nat{nat}
\opn\pff{pf}
\opn\Pf{Pf}
\opn\GL{GL}
\opn\SL{SL}
\opn\G{G}
\opn\E{E}
\opn\H{H}
\opn\M{M}
\opn\mod{mod}
\opn\ord{ord}
\opn\det{det}
\opn\Soc{Soc}
\opn\chara{char}
\opn\length{\ell}
\opn\pd{pd}
\opn\rk{rk}
\opn\projdim{proj\,dim}
\opn\injdim{inj\,dim}
\opn\rank{rank}
\opn\depth{depth}
\opn\grade{grade}
\opn\height{ht}
\opn\embdim{emb\,dim}
\opn\codim{codim}
\renewcommand{\tilde}{\widetilde}
\renewcommand{\bar}{\overline}
\title{Constructing indecomposable integrally closed modules over a two-dimensional regular local ring} 
\author{Futoshi Hayasaka}
\address{Department of Environmental and Mathematical Sciences, Okayama University, 
3-1-1 Tsushimanaka, Kita-ku, Okayama, 700-8530, JAPAN}
\email{hayasaka@okayama-u.ac.jp}
\keywords{integral closure, indecomposable module, monomial ideal, regular local ring}
\subjclass[2010]{Primary 13B22; Secondary 13H05}
\begin{document}

\maketitle

\begin{abstract}
In this article, we construct integrally closed modules of rank two 
over a two-dimensional regular local ring. The modules are explicitly constructed from 
a given complete monomial ideal with respect to a regular system of parameters. 
Then we investigate their indecomposability. 
As a consequence, we have a large class of indecomposable 
integrally closed modules whose Fitting ideal is not simple. 
This gives an answer to Kodiyalam's question. 
\end{abstract}

%%%%%%%%%%%%%%%%%%%%%%%%%%%%%%%%%%%%%%%%%%%%%%%%%%%%%
\section{Introduction}
%%%%%%%%%%%%%%%%%%%%%%%%%%%%%%%%%%%%%%%%%%%%%%%%%%%%%

The theory of complete (integrally closed) ideals in a regular local ring of dimension two 
was developed by Zariski in \cite{Za} and in \cite[Appendix 5]{ZS}. 
Zariski proved two structure theorems. The first main result is 
the product theorem. It asserts that 
the product of any two complete ideals in a two-dimensional regular local ring is again a complete ideal. 
The second main result is the unique factorization theorem. It asserts that any non-zero complete ideal
in a two-dimensional regular local ring can be expressed uniquely (except for ordering) 
as a product of simple complete ideals. 
Here an ideal is simple if it cannot be expressed as a product of two proper ideals. 
Since the classic work of Zariski, the theory has been attracting interest and 
has been generalized to more general situations. See for instance the papers \cite{Cut, HuSa, Li}. 
Among interesting results in this direction, a generalization of Zariski's product theorem to 
finitely generated torsion-free integrally closed modules was obtained by Kodiyalam 
in \cite{Ko}. 

The notion of integral closure of modules was introduced by Rees in \cite{Re}. 
Let $A$ be a Noetherian integral domain and let $M$ be a finitely generated torsion-free $A$-module. 
The integral closure of $M$, denoted by $\bar M$, 
is defined as a set of all elements 
$f \in M_K:=M \otimes_R K$ such that $f \in MV$ 
for every discrete valuation ring $V$ of 
$K$ containing $A$. Here $K$ is the quotient field of $A$, 
and $MV$ denotes the $V$-submodule of $M_K$ generated by $M$. 
The integral closure $\bar M$ is an $R$-submodule of $M_K$ containing $M$. 
The module $M$ is said to be integrally closed if $\bar M=M$. 

Let $R$ be a two-dimensional regular local ring with infinite residue field. 
Kodiyalam proved in \cite{Ko} that the product $MN$ of 
any two finitely generated torsion-free integrally closed $R$-modules $M$ and $N$ 
is again integrally closed
in the sense of Rees. Here the product $MN$ is the tensor product modulo $R$-torsion. 
Therefore, Kodiyalam's extension can be viewed as a natural generalization of Zariski's product theorem. 
Moreover, he proved that a certain Fitting ideal associated with an integrally closed module 
is again integrally closed. 
Let $F=M^{**}$ be the double $R$-dual of $M$. Then $F$ is free and 
it canonically contains $M$ with the quotient $F/M$ of finite length.
Thus, one can define the ideal $I(M)$ of $M$ as $I(M)=\Fitt_0(F/M)$. 
With this notation, Kodiyalam proved the following. 

\begin{Theorem}\label{kodi} $(${\rm Kodiyalam} \cite[Theorems 5.4, 5.7]{Ko}$)$
Let $(R, \fkm)$ be a two-dimensional regular local ring with the maximal ideal $\fkm$, 
infinite residue field $R/\fkm$. 
For a finitely generated torsion-free $R$-module $M$, we have the following. 
\begin{enumerate}
\item Suppose that $M$ is integrally closed. Then the ideal $I(M)$ of $M$ is again integrally closed. 
Furthermore, taking integral closure commutes with taking the ideal, that is, $I(\bar M)=\bar{I(M)}$. 
\item Suppose that $M$ has no free direct summand and $\bar{I(M)}$ is simple. 
Then $\bar M$ is an indecomposable $R$-module. 
In particular, there exist indecomposable integrally closed $R$-modules of arbitrary rank. 
\end{enumerate}
\end{Theorem}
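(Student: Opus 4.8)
\smallskip
\noindent\emph{Sketch of a proof.} The plan is to reduce part (1) to a single statement $(\ast)$ about top exterior powers, to prove $(\ast)$ (the crux) by a generic Bourbaki argument, and then to deduce part (2) quickly.

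\smallskip
\noindent\textit{Reduction of \textup{(1)}.} First I would identify $I(M)$ with the ideal $\bigwedge^{r}M\subseteq\bigwedge^{r}F\cong R$ ($r=\rank M$), i.e.\ the image of the canonical map $\bigwedge^{r}_{R}M\to\bigwedge^{r}_{R}F$: writing $M$ with $n$ generators and comparing with the resulting $r\times n$ presentation matrix of $F/M$, this image is exactly $\Fitt_{0}(F/M)$. For a discrete valuation ring $V$ between $R$ and its fraction field one has $\overline{M}V=MV$, free of rank $r$ over $V$, and $\bigwedge^{r}_{V}(MV)=(\bigwedge^{r}M)V$; hence any $f_{1}\wedge\cdots\wedge f_{r}$ with $f_{i}\in\overline{M}$ lies in $(\bigwedge^{r}M)V$ for every such $V$, which already gives $I(\overline{M})\subseteq\overline{I(M)}$ (over any Noetherian domain). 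Moreover $\bigwedge^{r}M\subseteq\bigwedge^{r}\overline{M}=I(\overline{M})\subseteq\overline{I(M)}$, so for every divisorial valuation $v$ of $R$ one gets $v(\bigwedge^{r}M)\ge v(I(\overline{M}))\ge v(\overline{I(M)})=v(\bigwedge^{r}M)$, i.e.\ $\bigwedge^{r}M$ and $I(\overline{M})$ take the same value at every $v$; consequently $\overline{I(M)}=\overline{I(\overline{M})}$. Both claims in (1) then follow from:

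\smallskip
\noindent $(\ast)$\quad \emph{if $N$ is a finitely generated torsion-free integrally closed module over a two-dimensional regular local ring, then $I(N)=\bigwedge^{r}N$ is an integrally closed ideal;}

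\smallskip
\noindent indeed, $(\ast)$ for $N=M$ gives that $I(M)$ is integrally closed, and $(\ast)$ for $N=\overline{M}$ gives $I(\overline{M})=\overline{I(\overline{M})}$, which with the previous line yields $I(\overline{M})=\overline{I(M)}$.

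\smallskip
\noindent\textit{Proof of $(\ast)$ --- the main obstacle.} Since $\bigwedge^{1}N=N$ is already an ideal, the rank-one case is trivial, so the content is entirely in lowering the rank, which I would do by a generic Bourbaki argument. Assume $r\ge 2$ and replace $R$ by the faithfully flat local ring extension $R'=R[z_{1},\dots,z_{r-1}]_{\fkm R[z_{1},\dots,z_{r-1}]}$, again a two-dimensional regular local ring with infinite residue field; then $N':=N\otimes_{R}R'$ is still integrally closed and $I(N')=I(N)R'$. For generic $z_{i}$ the module $N'$ contains a free submodule $F_{0}\cong (R')^{r-1}$ with $N'/F_{0}$ isomorphic to an $\fkm R'$-primary ideal $J$, and a computation with the defining exact sequence of the generic Bourbaki ideal should identify $\bigwedge^{r}N'$ with $J$, so that $I(N)R'=J$. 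It then suffices to show $J$ is integrally closed in $R'$: for then $I(N)R'$ is integrally closed, and faithful flatness --- $\overline{I(N)}\,R'\subseteq\overline{I(N)R'}=I(N)R'$, intersected with $R$ --- forces $I(N)$ integrally closed over $R$. Thus $(\ast)$ comes down to showing that \emph{the generic Bourbaki ideal of an integrally closed module over a two-dimensional regular local ring is integrally closed}, and this is where $\dim R=2$ is essential. I would attack it through Zariski's structure theory of complete ideals: an integrally closed module over a two-dimensional regular local ring should be contracted from the first quadratic transform $R[\fkm/x]$ for a general $x\in\fkm$ (using the infinite residue field), one transfers this to its Bourbaki ideal, and one propagates contractedness through every quadratic transformation sequence, which by Zariski's criterion forces completeness. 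The hardest point, I expect, is making the exterior power / Bourbaki ideal identification compatible with passage to quadratic transforms and controlling the infinitely many of them.

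\smallskip
\noindent\textit{Deduction of \textup{(2)}.} Suppose $\overline{M}=M_{1}\oplus M_{2}$ with $M_{1},M_{2}\ne 0$. Integral closure commutes with finite direct sums, so each $M_{i}$ is integrally closed, hence $I(M_{i})$ is integrally closed by $(\ast)$; and $\Fitt_{0}$ is multiplicative on direct sums, so $\overline{I(M)}=I(\overline{M})=I(M_{1})\,I(M_{2})$. As $\overline{I(M)}$ is simple it cannot be a product of two proper ideals, so one factor, say $I(M_{1})$, equals $R$; since $I(M_{1})=\Fitt_{0}(M_{1}^{**}/M_{1})\subseteq\Ann(M_{1}^{**}/M_{1})$, this forces $M_{1}^{**}/M_{1}=0$, so $M_{1}=M_{1}^{**}$ is reflexive, hence free over the two-dimensional regular local ring $R$. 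Thus $\overline{M}$ has a free direct summand $M_{1}$. Now the projection $p\colon\overline{M}\to M_{1}$ restricts to a map $M\to M_{1}$ with image $\fkb$; since $R$-linear maps preserve integral closure, $M_{1}=p(\overline{M})\subseteq\overline{\fkb}\subseteq\overline{M_{1}}=M_{1}$, so $\overline{\fkb}=M_{1}$. By the identity $I(\overline{N})=\overline{I(N)}$ (with $N=\fkb$) we get $\overline{I(\fkb)}=I(\overline{\fkb})=I(M_{1})=R$, so $I(\fkb)=R$; hence $\fkb^{**}/\fkb=0$, so $\fkb=\fkb^{**}$ is reflexive, hence free, hence integrally closed, whence $\fkb=\overline{\fkb}=M_{1}$. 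So $M\to M_{1}$ is onto, it splits, and $M_{1}$ is a free direct summand of $M$ --- contradicting the hypothesis. Therefore $\overline{M}$ is indecomposable. Finally, for the last assertion it is enough to exhibit, for each $r\ge 1$, an integrally closed $R$-module of rank $r$ with no free direct summand whose Fitting ideal is a simple complete ideal; such modules can be produced explicitly (for instance starting from a simple complete ideal such as $\fkm$), and then the indecomposability just proved applies.
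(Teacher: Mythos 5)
Your submission cannot be checked against an in-paper argument, because the paper does not prove Theorem \ref{kodi}: it is imported verbatim from Kodiyalam with the citation \cite[Theorems 5.4, 5.7]{Ko}, and Section 2 merely recalls the supporting machinery (Propositions \ref{2.2}--\ref{2.5} and the colength drop $\ell_R(R/I)>\ell_T(T/I^T)$). Judged on its own terms, your reduction of (1) to the single claim $(\ast)$ is correct and cleanly executed: the identification of $I(M)$ with the image of $\bigwedge^r M$ in $\bigwedge^r F\cong R$, the valuative inclusion $I(\overline{M})\subseteq\overline{I(M)}$, and the equality of divisorial values forcing $\overline{I(\overline{M})}=\overline{I(M)}$ are all sound. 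Your deduction of (2) is also essentially complete: multiplicativity of $\Fitt_0$ on direct sums, simplicity forcing one factor to be $R$, $\Fitt_0=R$ forcing reflexivity hence freeness in dimension two, and the splitting of $M\twoheadrightarrow M_1$ producing a free summand of $M$ all work (only the final ``arbitrary rank'' clause is waved at rather than exhibited).

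The genuine gap is that $(\ast)$ --- which carries the entire content of the theorem --- is asserted rather than proved. Your Bourbaki plan rests on two unestablished pivots. First, the identification $N'/F_0\cong I(N)R'$: the image of $g\mapsto f_1\wedge\cdots\wedge f_{r-1}\wedge g$ is generated by the $n$ elements $\det(f_1|\cdots|f_{r-1}|g_j)$, which are particular combinations of the maximal minors and a priori generate only a subideal (possibly just a reduction) of $I(N')$; one must also verify that $F_0$ is saturated in $N'$ for the quotient to equal that image. Second, and decisively, the claim that the resulting Bourbaki ideal is integrally closed is precisely the statement you set out to prove, transported to rank one, and you explicitly defer it (``the hardest point, I expect, is\dots''). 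Nothing in the sketch closes that loop. The route the surrounding paper is visibly equipped for avoids Bourbaki ideals entirely: induct on $\ell_R(F/M)$; an integrally closed $M$ is contracted from some $S=R[\frac{\fkm}{x}]$ (Proposition \ref{2.2}), whence $\mu_R(I(M))=\ord(I(M))+1$ and $I(M)$ is contracted; each $MT$ is integrally closed (Proposition \ref{2.4}) with $I(MT)=I(M)^T$ (Proposition \ref{2.5}) and strictly smaller colength, so by induction every first-quadratic transform of $I(M)$ is complete; and a contracted ideal all of whose transforms are complete is complete by the Zariski--Lipman--Huneke theory. To repair your write-up you should either carry out that induction or actually prove the two Bourbaki claims; as it stands, $(\ast)$ is a restatement of the difficulty, not a resolution of it.
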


A motivation of this article comes from the following question which 
can be found in the last paragraph in \cite[Example 5.8]{Ko}. 

\begin{Question}\label{ques}
{\rm 
Does the converse to Theorem \ref{kodi} (2) hold in the sense that 
an indecomposable integrally closed $R$-module $M$ of rank bigger than $1$ 
have a simple complete ideal $I(M)$?
}
\end{Question}

The purpose of this article is to give an answer to Question \ref{ques} by showing that 
there are numerous counterexamples and is to shed some light on a theory of integrally closed modules. 
In fact, we prove a stronger result 
which shows the ubiquity of indecomposable integrally closed modules of rank $2$ with the monomial 
Fitting ideal. Our results can be summarized as follows. 

\begin{Theorem}\label{main}
Let $(R, \fkm)$ be a two-dimensional regular local ring with the maximal ideal $\fkm$, 
infinite residue field $R/\fkm$. 
Let $x, y$ be a regular system of parameters for $R$ and let $I$ be 
an $\fkm$-primary complete monomial ideal 
with respect to $x, y$. Suppose that either 
\begin{enumerate}
\item $\ord(I) \geq 3$, or
\item $\ord(I)=2$ and $xy \notin I$
\end{enumerate}
is satisfied. Then there exists a finitely generated torsion-free indecomposable 
integrally closed $R$-module $M$ of rank $2$ 
with $I(M)=I$.  
\end{Theorem}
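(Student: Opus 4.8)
The plan is to realize $M$ as an explicit torsion‑free submodule of $F:=R^{2}$ built from the minimal monomial generators of $I$, to read off $I(M)=I$ from a presentation matrix, to prove integral‑closedness by a valuative argument exploiting the toric nature of the construction, and then to deduce indecomposability from a minimal‑generator count.

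Write the minimal monomial generators of $I$ as $x^{a_{0}},x^{a_{1}}y^{b_{1}},\dots ,x^{a_{k-1}}y^{b_{k-1}},y^{b_{k}}$ with $a_{0}>a_{1}>\cdots >a_{k}=0$ and $0=b_{0}<b_{1}<\cdots <b_{k}$. Under either hypothesis one has $k\ge 2$ (an $\fkm$‑primary complete monomial ideal of order $\ge 2$ has at least three minimal monomial generators), and, more to the point, the hypotheses pin down the behaviour of the staircase at its two ends, which is where they will be used. I would take $M\subseteq F$ to be the $R$‑submodule generated by an explicit list of vectors indexed by the corners of this staircase: a couple of ``corner'' generators assembled from the extremal data, together with ``connector'' generators mixing the two basis vectors $e_{1},e_{2}$ (and, when the staircase is long, some further monomial generators coming from its interior), the whole family arranged so that the $2\times 2$ minors of the resulting $2\times\mu$ generating matrix $P$ are, up to sign, exactly the minimal generators of $I$, every other minor being a monomial already lying in $I$. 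With such an $M$ in hand, $R^{2}/M=\operatorname{coker}P$ has finite length because the ideal $I_{2}(P)$ of those minors equals $I$ and is $\fkm$‑primary; hence $M$ has rank $2$, $M^{**}=R^{2}$, and $I(M)=\Fitt_{0}(R^{2}/M)=I_{2}(P)=I$.

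The crux is to show $M$ is integrally closed. Every generator of $M$ is an eigenvector for the torus $\mathbb{G}_{m}^{2}$ acting on $R$ through $x,y$ and trivially on $e_{1},e_{2}$, so $M$ is torus‑stable, and therefore so is $\overline M$. Hence in Rees's valuative criterion it is enough to test the monomial valuations $v_{(s,t)}$ attached to the slopes of the edges of the Newton polygon of $I$, together with the two coordinate valuations, and to check that an $f\in R^{2}$ lying in $MV$ for all of these $V$ must already lie in $M$. For each such $V$ the $V$‑lattice $MV$ has a transparent description, and the check becomes a finite comparison of exponent vectors against the staircase of $I$. This is the technical heart, and it is exactly here that the hypotheses on $\ord(I)$ are decisive: in the excluded range, $\ord(I)=2$ with $xy\in I$, the same recipe yields a module whose integral closure picks up a ``diagonal'' generator of the type $\fkm^{\ord(I)-1}(e_{1}+e_{2})$, forcing $\overline M$ to split off a direct summand; the conditions $\ord(I)\ge 3$, or $\ord(I)=2$ and $xy\notin I$, are precisely what rules this out and gives $\overline M=M$.

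For indecomposability, suppose $M\cong M_{1}\oplus M_{2}$ nontrivially. As the two ranks sum to $2$, each $M_{i}$ has rank $1$, so $M_{i}^{**}$ is reflexive of rank $1$ over the unique factorization domain $R$, hence free of rank $1$; thus $M_{i}$ is isomorphic to an ideal $J_{i}$ with $J_{i}^{**}=R$, i.e. $J_{i}=R$ or $J_{i}$ is $\fkm$‑primary, and, being a direct summand of the integrally closed module $M$, $J_{i}$ is itself integrally closed. So $J_{1},J_{2}$ are complete ideals (possibly $R$), and by functoriality of double‑dualization $I=I(M)=\Fitt_{0}(R/J_{1}\oplus R/J_{2})=J_{1}J_{2}$. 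Since $\mu(J)=\ord(J)+1$ for every complete $\fkm$‑primary ideal $J$ in a two‑dimensional regular local ring (and also for $J=R$), and since $\ord$ is additive on products,
\[
\mu(M)=\mu(J_{1})+\mu(J_{2})=\bigl(\ord(J_{1})+1\bigr)+\bigl(\ord(J_{2})+1\bigr)=\ord(J_{1}J_{2})+2=\ord(I)+2 .
\]
It therefore suffices to compute $\mu(M)=\ord(I)+1$ for the module produced above; this is a direct count with the explicit generators, valid under the stated hypotheses (in the excluded case the count gives $\ord(I)+2$, in accordance with the output then being decomposable). As $\ord(I)+1<\ord(I)+2$, the module $M$ admits no such decomposition and is indecomposable. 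The principal obstacle is the integral‑closedness step: simultaneously controlling $\overline M$ at all the monomial valuations and identifying exactly which monomial ideals escape the diagonal degeneracy; once that is settled, indecomposability falls out of the generator count.
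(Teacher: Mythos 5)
Your indecomposability argument rests on a premise that cannot hold: you propose to show $\mu_R(M)=\ord(I)+1$ and contrast this with $\mu_R(J_1)+\mu_R(J_2)=\ord(I)+2$. But by Kodiyalam's results (Propositions 2.2 and 2.3 of the paper), every integrally closed finitely generated torsion-free module is contracted, and a contracted module satisfies $\mu_R(M)=\ord(I(M))+\rank_R(M)$; so \emph{any} integrally closed rank-$2$ module with $I(M)=I$ has exactly $\ord(I)+2$ minimal generators, whether it is indecomposable or not (the paper's modules $M_k$ indeed have $\mu_R(M_k)=r+2$, Lemma 3.3). Hence either your $M$ is not integrally closed, or the generator count gives $\ord(I)+2$ and yields no contradiction; in no case can minimal generators distinguish $M$ from $J_1\oplus J_2$. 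The paper's route is entirely different: it exploits the \emph{second} Fitting ideal, showing $I_1(M_k)=(x,y^{\ell})$ for an explicit $\ell$, so that a decomposition $M_k\cong J_1\oplus J_2$ forces $J_1J_2=I$ and $J_1+J_2=(x,y^{\ell})$; combined with Zariski's unique factorization this forces $I$ to admit a simple factor $(x,y^{\ell})$ of order $1$, which can be ruled out for a suitable choice of $k$ (and, in the residual cases, by a colength comparison of $F/M_k$ against the putative summands). Some version of this second invariant, or the multiplicity computation used for $I=\fkm^r$, is indispensable.

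The integral-closedness step is also not established. Your reduction to finitely many monomial valuations is asserted, not proved: the connector generators such as $y^{k}e_1+xe_2$ are not eigenvectors for the torus acting trivially on $e_1,e_2$, so torus-stability of $\overline M$ needs a nontrivial grading on the basis, and even granting that, the claim that the edge valuations of the Newton polygon of $I$ suffice to compute $\overline M$ for a \emph{module} is precisely the content that must be proved --- it is not the standard statement for monomial ideals. The paper avoids this entirely by showing $M_k$ is contracted from $S=R[\fkm/(x+y)]$ and invoking the descent of integral closedness along contracted modules (Proposition 2.4), then checking $M_kS_Q$ at each localization, where the module splits as a free summand plus a transform ideal $I^{S_Q}$, which is complete because $I$ is. Finally, note that the full theorem requires a case analysis (whether $I$ has a simple factor $(x,y^{\ell})$ for some $1\le\ell\le r-1$, and if so for all such $\ell$), with the witness $k$ sometimes taken to be $r$ or $r+1$ rather than lying in $\{1,\dots,r-1\}$; your proposal does not engage with this, and a single uniform choice of $M$ will not work.
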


As a direct consequence, we have a large class of counterexamples to Question \ref{ques}. 
Indeed, if we consider a non-simple complete monomial ideal $I$ with $\ord(I)\geq 3$, 
e.g. $I=\fkm^{r}$ where $r \geq 3$ as the simplest case, 
then Theorem \ref{main} shows that one can find such a counterexample $M$ with $I(M)=I$. 
These modules are obtained quite explicitly 
from a given complete monomial ideal.

This article is organized as follows. 
In section 2, we collect basic facts from \cite{Ko} on integrally closed modules 
over a two-dimensional regular local ring. We also 
fix our notations we will use throughout this article. 
In section 3, we introduce a certain module of rank $2$, denoted by $M_k$ or $M_k(I)$, 
associated to a given monomial ideal $I$ with respect to a regular system of parameters $x, y$ 
and an integer $k$; see Definition \ref{3.2}. 
The modules $M_{k}$ play a central role in this article. 
A crucial point is that, for any $1 \leq k \leq r-1$, 
the associated module $M_{k}$ is integrally closed with $I(M_{k})=I$ if $I$ is complete; 
see Theorem \ref{3.6}. 
In section 4, we investigate the indecomposability of the modules $M_k$ 
when a given monomial ideal $I$ is integrally closed with order at least $3$. 
One important fact is that the associated module $M_k$ has another Fitting ideal 
$I_1(M_k)$ of order $1$; see Observation \ref{4.1}. Together with Zariski's factorization theorem, 
we can readily get a class of indecomposable integrally closed 
module whose Fitting ideal is $I$, if a given monomial complete ideal $I$ has 
no simple factor of order $1$; see Theorem \ref{4.2}. 
When a given complete monomial ideal $I$ has a simple factor of order $1$, we divide the problem into two cases. 
One case is when a given ideal $I$ does not have a simple factor of the form $(x, y^{\ell})$ 
for some $1 \leq \ell \leq r-1$. 
In this case, the problem can be reduced to particular cases; see Observation \ref{4.3}, and 
then we have Theorem \ref{4.7}. 
The other case is when $I$ is of the form $I=(x,y)(x,y^2) \cdots (x,y^{r-1}) \fkb$
where $\fkb=(x^{\alpha}, y)$ or $\fkb=(x,y^{\beta})$.
In this case, we consider the next modules $M_{r}$ and $M_{r+1}$, and then we have 
Theorems \ref{4.9} and \ref{4.11}. 
In section 5, we complete a proof of Theorem \ref{main} and give some examples to 
illustrate our results. 

Throughout this article, let $(R, \fkm)$ be a two-dimensional regular local ring with the maximal ideal 
$\fkm$, infinite residue field $R/\fkm$. Let $K$ be the quotient field of $R$. For an ideal $\fka$ in $R$, 
the order of 
$\fka$ will be denoted by $\ord(\fka)=\max \{n \mid \fka \subset \fkm^{n} \}$. 
For an $R$-module $L$, the notations $\rank_{R}(L)$ and $\mu_{R}(L)$ will denote respectively 
the rank and the minimal number of generators of $L$. The notation $\ell_{R}(\ast)$ will denote 
the length function on $R$-modules. We will use both the term ``integrally closed'' and the classical one ``complete'' for ideals.

%%%%%%%%%%%%%%%%%%%%%%%%%%%%%%%%%%%%%%%%%%%%%%%%%%%%%
\section{Preliminaries}
%%%%%%%%%%%%%%%%%%%%%%%%%%%%%%%%%%%%%%%%%%%%%%%%%%%%%

In this section, we collect some basic facts from \cite{Ko} on integrally closed modules over $R$. 
See also \cite{Hu, HuSw, ZS} for the details on a theory of complete ideals in $R$ and 
\cite{Re} for the details on a theory of integral closure of modules.  

Let $M$ be a finitely generated torsion-free $R$-module. 
We denote $M^*:=\Hom_R(M, R)$ the $R$-dual of $M$, and let $F:=M^{**}$ be the double dual of $M$. 
Then $F$ is $R$-free and it canonically contains $M$ with the quotient $F/M$ of finite length. 
Indeed, one can see that if $M$ is contained in a free $R$-module $G$ 
with the quotient $G/M$ of finite length, 
then there is a unique $R$-linear isomorphism $\varphi: F \to G$ such that 
the restriction $\varphi|_M$ is identity on $M$ (\cite[Proposition 2.1]{Ko}). 
Thus, the two quotient modules $F/M \cong G/M$ are isomorphic as $R$-modules. 
In fact, $F/M$ is isomorphic to the $1$st local cohomology module $H^1_{\fkm}(M)$ of $M$ 
with respect to $\fkm$. Therefore, one can define Fitting ideals associated to $M$ as follows: 
\begin{align*}
I(M) &=\Fitt_0(F/M)  \\
I_1(M) &= \Fitt_1(F/M)  
\end{align*}

Let $M_K=M \otimes_R K$. A subring $S$ of $K$ containing $R$ is called birational overring of $R$. For 
such a ring $S$, let $MS:=\Im (M \otimes_R S \to M_K)$ denote an 
$S$-submodule of $M_K$ generated by $M$, 
which is isomorphic to the tensor product $M \otimes_{R} S$ modulo $S$-torsion. 
Then the integral closure $\bar M$ of $M$ is defined as  
$$\bar M=\{f \in M_K \mid f \in MV \ \text{for every discrete valuation ring} \ 
V \ \text{with} \ R \subset V \subset K \}. $$
Since $R$ is a two-dimensional regular local ring, and, hence, it is normal, 
the integral closure $\bar M$ can be considered in the free module $F$, and we have the following
criteria for integral dependence of a module (see \cite{Re} and also \cite[Theorem 3.2]{Ko}).  
\begin{align*}
\bar M &= \{f \in F \mid f \in \Sym_R^1(F) \ \text{is integral over} \ \Sym_R(M) \} \\
&=\{f \in F \mid I(M+Rf) \subset \bar{I(M)}\}.
\end{align*}
These criteria imply the following property (\cite[Corollary 3.3]{Ko}): 
$$
M \ \text{is integrally closed if and only if so is} \ M_Q \ \text{for every maximal ideal} \  
Q \ \text{of} \ R.
$$ 

In Zariski's theory of complete ideals in $R$, contracted ideals play an important role. 
Kodiyalam extended this notion to modules as follows. 

\begin{Definition}\label{2.1}
Let $S$ be a birational overring of $R$. 
Then a finitely generated torsion-free $R$-module $M$ is said to be contracted from $S$, 
if the equality 
$$MS \cap F=M$$ 
holds true as submodules of $FS$. 
\end{Definition}

Here we recall some basic properties of contracted modules. 

\begin{Proposition}\label{2.2}
Let $M$ be a finitely generated torsion-free $R$-module. For any 
$x \in \fkm \setminus \fkm^2$, 
the following conditions are equivalent. 
\begin{enumerate}
\item $M$ is contracted from $S=R[ \frac{\fkm}{x} ]. $
\item The equality $\fkm M :_F x=M$ holds true. 
\item The equality $M :_F x=M:_F \fkm$ holds true.  
\end{enumerate}
Therefore, if $M$ is integrally closed, then $M$ is contracted from $S=R[\frac{\fkm}{x}]$ 
for some $x \in \fkm \setminus \fkm^2$. 
\end{Proposition}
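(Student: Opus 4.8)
The plan is to prove the cyclic chain of implications $(1)\Rightarrow(2)\Rightarrow(3)\Rightarrow(1)$, and then deduce the final assertion from Proposition 2.2 applied locally together with the criterion that $M$ is integrally closed iff $M_Q$ is for every maximal ideal $Q$. Throughout I work inside the free module $F=M^{**}$, using that $F/M$ has finite length and that $x\in\fkm\setminus\fkm^2$ is a regular element (being part of a regular system of parameters, since $R$ is a two-dimensional regular local ring). The key structural fact I would isolate first: since $S=R[\frac{\fkm}{x}]=\bigcup_n \fkm^n x^{-n}$, we have $FS=F[x^{-1}]$ inside $F_K$, and an element $f\in F$ lies in $MS$ if and only if $x^n f\in \fkm^n M$ for some $n\ge 0$; hence $MS\cap F=\{f\in F \mid x^nf\in\fkm^nM \text{ for some }n\}=\bigcup_n(\fkm^nM:_F x^n)$.

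For $(1)\Rightarrow(2)$: if $MS\cap F=M$, then since $\fkm M:_F x\subseteq MS\cap F$ (as $x\cdot(\fkm M:_Fx)\subseteq\fkm M$ gives the defining condition with $n=1$) and the reverse inclusion $M\subseteq\fkm M:_Fx$ always holds because $xM\subseteq\fkm M$, we get $\fkm M:_F x=MS\cap F=M$. For $(2)\Rightarrow(1)$: assuming $\fkm M:_Fx=M$, I would show by induction on $n$ that $\fkm^nM:_Fx^n=M$; if $x^nf\in\fkm^nM\subseteq\fkm^{n-1}M$ then $x\cdot(x^{n-1}f)\in\fkm\cdot\fkm^{n-2}(x M)\subseteq\fkm(\fkm^{n-1}M)$... more cleanly: $x^nf\in\fkm^nM$ gives $x^{n-1}f\in\fkm^{n-1}M:_Fx$, and using $\fkm^{n-1}M:_Fx=\fkm^{n-2}M$ — this is where one must be a little careful and really use that $\fkm$ is generated by $x$ and one other parameter, so that the colon $\fkm^{n}M:_Fx$ collapses; in fact the clean route is $\fkm^nM:_F x = \fkm^{n-1}M$ for all $n\ge 1$, proved from $(2)$ by an easy induction using $\fkm\cdot(\fkm^{n-1}M:_Fx)\subseteq \fkm^{n-1}M$. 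Iterating gives $MS\cap F=M$. This back-and-forth between $(1)$ and $(2)$ is the part I expect to require the most care with the filtration $\fkm^nM$.

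For $(2)\Leftrightarrow(3)$: clearly $\fkm M:_Fx\subseteq M:_Fx$ always (since $\fkm M\subseteq M$... no), rather: $M:_F\fkm\subseteq M:_Fx$ trivially as $x\in\fkm$, and $M\subseteq M:_F\fkm$. So $(3)$ says $M:_Fx=M:_F\fkm$, while $(2)$ says $\fkm M:_Fx=M$. To link them, pick $y$ with $x,y$ a regular system of parameters, so $\fkm=(x,y)$ and $S=R[\frac yx]$. Given $(2)$: if $xf\in M$ then $xf\in M\subseteq\fkm M:_Fx$ is automatic; instead take $f$ with $xf\in M$ and show $yf\in M$, whence $\fkm f\subseteq M$; multiplying $xf\in M$ by $y/x$... the standard argument is that $f\in M:_Fx$ implies $xf\in M$ so $x(\text{anything})$—one shows $M:_Fx/M$ is killed by $\fkm$ using $(2)$, because $\fkm(M:_Fx)\subseteq \fkm M:_F x=M$ by $(2)$ applied after noting $\fkm\cdot(M:_Fx)\cdot x\subseteq \fkm M$. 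Conversely $(3)\Rightarrow(2)$: if $xf\in\fkm M$ then $f\in M:_Fx\cdot$(?); write $xf=xa+yb$ with $a,b\in M$, so $x(f-a)=yb$, hence $f-a\in (yM:_Fx)$... and one pushes this through $(3)$. Finally, for the last sentence: if $M$ is integrally closed then each $M_Q$ is integrally closed over the regular local ring $R_Q$; it suffices to handle $Q=\fkm$ (localization elsewhere is a domain of dimension $\le 1$ where the statement is trivial), and there the integral-closure criterion $\bar M=\{f\in F\mid I(M+Rf)\subseteq\bar{I(M)}\}$ together with Zariski-type contractedness (for infinite residue field, a general $x\in\fkm\setminus\fkm^2$ works) gives $(2)$, hence $(1)$, for a suitable choice of $x$.
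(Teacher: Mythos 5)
First, a point of reference: the paper gives no argument for this proposition at all --- its ``proof'' is a citation to Kodiyalam's Propositions 2.5 and 4.3 --- so your plan is really being measured against that source. Your skeleton for $(1)\Leftrightarrow(2)\Leftrightarrow(3)$ is the right (Huneke--Kodiyalam) one, and the identification $MS\cap F=\bigcup_n(\fkm^nM:_Fx^n)$ and the implication $(1)\Rightarrow(2)$ are fine. But the two places you yourself flag as delicate are genuinely open in your write-up. For $(2)\Rightarrow(1)$ you need the colon collapse $\fkm^{n+1}M:_Fx=\fkm^nM$ for all $n$, and the inclusion $\fkm\cdot(\fkm^{n-1}M:_Fx)\subseteq\fkm^{n-1}M$ you propose as the engine of an ``easy induction'' only yields $\fkm^{n-1}M:_Fx\subseteq\fkm^{n-1}M:_F\fkm$, which is not the needed statement. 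The step that actually closes the induction (and, in its $n=0$ form, also $(3)\Rightarrow(2)$) is: with $x,y$ a regular system of parameters write $\fkm^{n+1}M=x\fkm^nM+y^{n+1}M$, so $xf\in\fkm^{n+1}M$ gives $x(f-g)=y^{n+1}h$ with $g\in\fkm^nM$, $h\in M$; since $x,y^{n+1}$ is a regular sequence on the free module $F$, $f-g=y^{n+1}u$ with $xu=h\in M$, and then $(3)$ gives $yu\in M$, hence $f\in\fkm^nM$. You name all the ingredients (regular sequence, the second parameter $y$) but never assemble them, and your sketch of $(3)\Rightarrow(2)$ stops exactly at this point (``$f-a\in(yM:_Fx)$\dots and one pushes this through $(3)$'').

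The more serious gap is the final assertion. Asserting that the criterion $\bar M=\{f\in F\mid I(M+Rf)\subseteq\overline{I(M)}\}$ ``together with Zariski-type contractedness gives $(2)$ for a suitable $x$'' names the ingredients without the argument. What is actually needed (Kodiyalam, Proposition 4.3) is: choose $x$ with $\ell_R(R/(\overline{I(M)}+(x)))=\ord(\overline{I(M)})$ (possible since $R/\fkm$ is infinite), so that the \emph{ideal} $\overline{I(M)}$ is contracted and hence $\fkm\,\overline{I(M)}:_Rx=\overline{I(M)}$; then for $f\in\fkm M:_Fx$ every maximal minor of a generating matrix of $M+Rf$ involving the column of $f$ satisfies $x\cdot(\text{minor})\in\fkm\,I(M)$ by multilinearity, so $I(M+Rf)\subseteq\fkm\,I(M):_Rx\subseteq\overline{I(M)}$ and $f\in\bar M=M$, which is condition $(2)$. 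None of this computation appears in your plan. A minor further point: the proposed reduction ``to $Q=\fkm$ by localizing at the maximal ideals of $R$'' is vacuous, since $R$ is local and $\fkm$ is its only maximal ideal.
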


\begin{proof}
See the proof of \cite[Propositions 2.5 and 4.3]{Ko}. 
\end{proof}

Here is another useful characterization of contracted modules. 

\begin{Proposition}\label{2.3}
Let $M$ be a finitely generated torsion-free $R$-module. Then the following conditions are equivalent. 
\begin{enumerate}
\item $M$ is contracted from $S=R[ \frac{\fkm}{x} ]$ for some $x \in \fkm \setminus \fkm^2$. 
\item The equality $\mu_R(M)=\ord(I(M))+\rank_R(M)$ holds true. 
\end{enumerate}
Moreover, when this is the case, for any $x \in \fkm \setminus \fkm^2$ such that 
$$\ell_R(R/I(M)+(x))=\ord(I(M)), $$
the module $M$ is contracted from $S=R[ \frac{\fkm}{x} ]$. 
\end{Proposition}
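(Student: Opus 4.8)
The plan is to establish the equivalence (1) $\Leftrightarrow$ (2) by combining the characterization in Proposition \ref{2.2} with a dimension count on the special fiber of $M$ modulo a sufficiently general linear form. First I would show that (1) implies (2). Assuming $M$ is contracted from $S=R[\frac{\fkm}{x}]$, Proposition \ref{2.2} gives $\fkm M:_F x = M$, equivalently $(M:_F x)/M \cong 0$ while $(M:_F\fkm)=(M:_F x)$; the relevant fact is that $\ell_R(F/(M+xF))$ controls $\mu_R(M)$. More precisely, I would use the standard contracted-ideal bookkeeping extended to modules: reduction modulo $x$ gives an exact sequence relating $M/xM$, $F/xF$, and $(F/M)$, and since $F/M \cong H^1_\fkm(M)$ has finite length, the contractedness $\fkm M:_F x = M$ forces $x$ to be a nonzerodivisor on $F/M$ in the appropriate graded sense. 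Counting minimal generators via $\mu_R(M) = \dim_{R/\fkm}(M/\fkm M)$ and tracking how the free part $F$ (of rank $\rank_R M$) and the torsion quotient $F/M$ (of colength $\ord(I(M))$ once we know $x$ is generic with respect to $I(M)$) each contribute, one lands on $\mu_R(M) = \ord(I(M)) + \rank_R(M)$.

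For the converse, (2) implies (1), I would argue that for a general $x \in \fkm\setminus\fkm^2$ we always have the inequality $\mu_R(M) \le \ord(I(M)) + \rank_R(M)$ — this is the module analogue of the classical fact $\mu_R(\fka) \le \ord(\fka)+1$ for an $\fkm$-primary ideal, obtained by choosing $x$ with $\ell_R(R/(I(M)+(x))) = \ord(I(M))$ (possible since $R/\fkm$ is infinite) and estimating generators of $M$ through the surjection $F/\fkm F \twoheadrightarrow$ (something governed by $I(M)$). Then equality in (2) forces equality in this generator count, which, reversing the exact-sequence analysis from the first part, yields $\fkm M:_F x = M$, hence contractedness from $S=R[\frac{\fkm}{x}]$ by Proposition \ref{2.2}. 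This same chain of reasoning simultaneously proves the last assertion: the only property of $x$ used in the converse direction is $\ell_R(R/(I(M)+(x))) = \ord(I(M))$, so any such $x$ works.

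The main obstacle I anticipate is making the dimension count rigorous, namely proving the inequality $\mu_R(M) \le \ord(I(M)) + \rank_R(M)$ and identifying exactly when it is sharp. The subtlety is that $M$ sits inside the free module $F=M^{**}$ with $F/M$ of finite length but not necessarily cyclic or otherwise simple, so one cannot just quote the ideal case verbatim; one needs to pass to the associated graded object or to $F/xF$ and argue that the image of $M$ there is generated by $\rank_R(M)$ "free" generators plus a contribution of size at most $\ell_R(R/(I(M)+(x)))$ coming from the colength of $M$ in $F$. Handling the interaction between the free quotient and the torsion quotient — essentially a short exact sequence $0 \to M \to F \to F/M \to 0$ tensored with $R/(x)$, together with $\mathrm{Tor}_1^R(F/M, R/(x))$ — is where the real work lies. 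Everything else is Nakayama's lemma and the genericity of $x$ afforded by the infinite residue field.
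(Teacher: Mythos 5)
The paper offers no argument of its own here---it cites \cite[Proposition 2.5]{Ko}---so the comparison is with Kodiyalam's proof, whose architecture your plan does reproduce: reduce modulo a linear form $x$, relate $\mu_R(M)$ to $\ell_R(R/(I(M)+(x)))$ and to the contractedness criterion of Proposition \ref{2.2}, and observe that the converse direction only uses the hypothesis $\ell_R(R/(I(M)+(x)))=\ord(I(M))$, which yields the final assertion. The problem is that the step you yourself flag as ``where the real work lies'' is the entire content of the proposition, and the heuristics you wrap around it are false as written. The module $F/M$ does \emph{not} have colength $\ord(I(M))$ (take $M=\fkm^2\subset F=R$: then $\ell_R(F/M)=3$ while $\ord(I(M))=2$), and $x$ is never a nonzerodivisor on the nonzero finite-length module $F/M$, in any graded sense; contractedness is instead the statement that the kernel of multiplication by $x$ on $F/\fkm M$ is as small as possible, namely $M/\fkm M$.

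The two facts you are missing are
$$\mu_R(M)+\ell_R\bigl((\fkm M:_F x)/M\bigr)=\rank_R(M)+\ell_R\bigl(F/(M+xF)\bigr)
\quad\text{and}\quad
\ell_R\bigl(F/(M+xF)\bigr)=\ell_R\bigl(R/(I(M)+(x))\bigr).$$
The first follows by equating the lengths of the kernel $(\fkm M:_F x)/\fkm M$ and the cokernel $F/(\fkm M+xF)$ of multiplication by $x$ on the finite-length module $F/\fkm M$: the kernel contains $M/\fkm M$ (as $xM\subset\fkm M$), giving $\ell_R(\ker)=\mu_R(M)+\ell_R((\fkm M:_F x)/M)$, while over the discrete valuation ring $R/(x)$ the image of $M$ in $F/xF$ is free of rank $\rank_R(M)$, giving $\ell_R(\Coker)=\rank_R(M)+\ell_R(F/(M+xF))$. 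The second follows because Fitting ideals commute with the base change $R\to R/(x)$ together with the structure theorem over that discrete valuation ring. Granting these, Proposition \ref{2.2} says $M$ is contracted from $R[\frac{\fkm}{x}]$ exactly when the error term $\ell_R((\fkm M:_F x)/M)$ vanishes, i.e.\ when $\mu_R(M)=\rank_R(M)+\ell_R(R/(I(M)+(x)))$; since $\ell_R(R/(I(M)+(x)))\geq\ord(I(M))$ for every $x$, with equality for a general $x$ because the residue field is infinite, one gets the unconditional bound $\mu_R(M)\leq\ord(I(M))+\rank_R(M)$, and both implications together with the ``moreover'' clause then fall out exactly as you describe. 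Without the two displayed identities, however, your text is a plan rather than a proof.
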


\begin{proof}
See \cite[Proposition 2.5]{Ko}.
\end{proof}

Consider a birational overring $S=R[\frac{\fkm}{x}]$ of $R$ 
where $x \in \fkm \setminus \fkm^2$. Then it is well-known that for any maximal ideal $Q$ of $R$, 
\begin{itemize}
\item $S_Q$ is a discrete valuation ring when $Q \nsupseteq \fkm S$
\item $S_Q$ is a two-dimensional regular local ring when $Q \supseteq \fkm S$
\end{itemize}
The two-dimensional regular local ring $S_Q$ for a maximal ideal $Q$ of $S$ containing $\fkm S$ is called a first quadratic 
transform of $R$. For an $\fkm$-primary ideal $I$ in $R$ with $\ord(I)=r$, we can write 
$IS=x^r[IS:_S x^r]. $
Then we define the ideal $I^{S}$ as 
$$I^S=IS:_S x^r$$ 
and call it a transform of $I$ in $S$. For a first quadratic transform $T:=S_Q$ of $R$, 
we also define a transform $I^{T}$ of $I$ in $T$ as 
$$I^T=I^S T. $$ 

Contracted modules have the following nice property. 

\begin{Proposition}\label{2.4}
Let $M$ be a finitely generated torsion-free $R$-module. 
Suppose that $M$ is contracted from $S=R[ \frac{\fkm}{x} ]$ for some 
$x \in \fkm \setminus \fkm^2$. 
Then the following conditions are equivalent. 
\begin{enumerate}
\item $M$ is an integrally closed $R$-module. 
\item $MS$ is an integrally closed $S$-module. 
\end{enumerate}
In particular, when this is the case, 
for any first quadratic transform $T$ of $R$, $MT$ is an integrally closed $T$-module.  
\end{Proposition}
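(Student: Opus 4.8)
The plan is to read off the equivalence from the valuative description of the integral closure, using the hypothesis that $M$ is contracted from $S$ only to transport information from $S$ back to $R$, and to obtain the closing ``in particular'' clause as a by‑product of the proof of $(1)\Rightarrow(2)$. I would dispose of $(2)\Rightarrow(1)$ first, since it is the only implication that genuinely needs contractedness. Every discrete valuation ring $V$ with $S\subseteq V\subseteq K$ also satisfies $R\subseteq V\subseteq K$, and $\bar M\subseteq F\subseteq FS$; so the definition of the integral closure already gives $\bar M\subseteq \overline{MS}$ inside $FS$. Assuming $(2)$, i.e.\ $\overline{MS}=MS$, we get $\bar M\subseteq \overline{MS}\cap F=MS\cap F=M$, the last equality being exactly the assumption that $M$ is contracted from $S$ (Definition~\ref{2.1}); hence $\bar M=M$.

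For $(1)\Rightarrow(2)$ I would use that the formation of the integral closure commutes with localization, so that, by the local criterion recalled in Section~2 applied to the ring $S$, the module $MS$ is integrally closed over $S$ if and only if $MS_Q$ is integrally closed over $S_Q$ for every maximal ideal $Q$ of $S$. When $Q\nsupseteq\fkm S$ the ring $S_Q$ is a discrete valuation ring, so the finitely generated torsion‑free module $MS_Q$ is free, hence integrally closed, and there is nothing to check. When $Q\supseteq\fkm S$ the ring $T:=S_Q$ is a first quadratic transform of $R$, and the whole matter is reduced to the single claim: if $M$ is integrally closed over $R$ and $T$ is a first quadratic transform of $R$, then $MT$ is integrally closed over $T$. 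Granting this, $(1)\Rightarrow(2)$ follows at once from the reduction above; and since every first quadratic transform of $R$ occurs as $S_Q$ for a suitable choice of $x\in\fkm\setminus\fkm^2$ and a suitable $Q\supseteq\fkm S$ (and an integrally closed $M$ is contracted from such an $S$ by Proposition~\ref{2.2}), the same claim also gives the last sentence of the proposition.

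It remains to prove that claim, which is the module counterpart of Zariski's theorem that the integral closure of an $\fkm$‑primary ideal in a two‑dimensional regular local ring is preserved under quadratic transforms; I would reduce it to Zariski's ideal statement by means of the ideal $I(-)$ and the criterion $\bar N=\{\,f\in N^{**}\mid I(N+Rf)\subseteq\overline{I(N)}\,\}$ recalled in Section~2, now applied over $T$. Write $G$ for the double $T$‑dual of $MT$, so that $MT\subseteq G\subseteq FT$ with $G/MT$ of finite length; note that $FT/MT$ itself need \emph{not} be of finite length over $T$, because $\fkm T$ equals the principal height‑one ideal $xT$ rather than an ideal primary to the maximal ideal of $T$, so $G$ is in general strictly smaller than $FT$. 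One then has $\overline{MT}=\{\,f\in G\mid I(MT+Tf)\subseteq\overline{I(MT)}\,\}$, and the crux is the transform‑theoretic dictionary: that $G$ is the localization at $Q$ of the transform of the free module $F$, and that $I(MT)$ coincides with the Zariski transform $I(M)^{T}$ of the ideal $I(M)$ of $M$. With this in hand, and using that $I(M)$ is complete (for which, to avoid circularity, one intertwines this claim with the statement ``$\overline{I(M)}=I(M)$'' in a single induction on $\ell_R(F/M)$, exactly as in Zariski's and Kodiyalam's arguments), Zariski's theorem gives that $I(M)^{T}=I(MT)$ is complete, whence $\overline{I(MT)}=I(MT)$ and the criterion forces $\overline{MT}=MT$.

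The main obstacle is precisely the setting‑up of this dictionary, that is, identifying the double $T$‑dual $G$ of $MT$ and proving the equality $I(MT)=I(M)^{T}$ of ideals rather than a mere inclusion; here the contractedness of $M$ — equivalently, the equality $\mu_R(M)=\ord(I(M))+\rank_R(M)$ of Proposition~\ref{2.3}, which pins down $\mu$ and hence the number of generators of $F/M$ and of its transform — is the essential input that makes the comparison of Fitting ideals under the quadratic transform exact. Everything else in the argument is either the elementary valuative inclusion of the first paragraph, the localization bookkeeping of the second, or a direct appeal to Zariski's theorem for ideals.
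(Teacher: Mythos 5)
The paper itself gives no argument for this proposition --- it only cites \cite{Ko}, Proposition 4.6 --- so your proposal has to stand on its own. Your direction $(2)\Rightarrow(1)$ is correct and is the standard one: every discrete valuation ring between $S$ and $K$ lies between $R$ and $K$, hence $\bar M\subseteq \overline{MS}\cap F=MS\cap F=M$ by contractedness. The reduction of $(1)\Rightarrow(2)$ to the single claim ``$M$ integrally closed $\Rightarrow MT$ integrally closed for a first quadratic transform $T$,'' via localization at the maximal ideals of $S$ and the triviality of the DVR case, is also sound and consistent with how the paper uses these facts in Section~3.

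The gap is in your proof of that key claim. From $\overline{I(MT)}=I(MT)$ you conclude that ``the criterion forces $\overline{MT}=MT$.'' It does not: the criterion reads $\overline{MT}=\{f\in G\mid I(MT+Tf)\subseteq\overline{I(MT)}\}$, so what you actually need is that every $f\in G\setminus MT$ \emph{strictly} enlarges the zeroth Fitting ideal, and completeness of $I(MT)$ gives no control over that. The underlying principle ``$I(N)$ complete $\Rightarrow N$ complete'' is false. For instance, over $R$ itself take $N=\langle (x,0),\,(0,y),\,(y,x)\rangle\subseteq R^{2}$: then $I(N)=(x^{2},xy,y^{2})=\fkm^{2}$ is complete, yet $(0,x)\notin N$ while $I(N+R(0,x))=\fkm^{2}$, so $(0,x)\in\bar N\setminus N$ and $N$ is not integrally closed. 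Nothing in your final step distinguishes $MT$ from such an $N$ (you do not invoke, let alone verify, any contractedness of $MT$ there), so the inference is unjustified --- and closing it is precisely the hard content of Kodiyalam's Proposition 4.6, whose actual proof works with the valuative/Rees-algebra description and the contraction equality $\fkm M:_{F}x=M$ directly rather than through the Fitting ideal. There is also a circularity you only half-address: both $I(MT)=I(M)^{T}$ (Proposition \ref{2.5}, which is Proposition 4.7 of \cite{Ko}) and the completeness of $I(M)$ (Theorem 5.4 of \cite{Ko}) sit downstream of Proposition 4.6 in Kodiyalam's development; the ``single induction on $\ell_{R}(F/M)$'' you gesture at is not set up, and even if it were, it would still have to pass through the invalid inference above.
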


\begin{proof}
See \cite[Proposition 4.6]{Ko}. 
\end{proof}

Therefore, for an $\fkm$-primary integrally closed ideal $I$ in $R$, 
a transform $I^T$ of $I$ in a first quadratic transform $T=S_Q$ of $R$ is also integrally closed. 
Indeed, since $I$ is complete, $IS$ and hence $I^{S}$ is integrally closed 
by Proposition \ref{2.4} so that its 
localization $I^T$ is also integrally closed. 

One of crucial points in the theory of both integrally closed ideals and modules is that 
the colength of a transform $I^{T}$ in a first quadratic transform $T$ of $R$ is less than 
the one of an $\fkm$-primary ideal $I$. Namely, for an $\fkm$-primary ideal $I$ of $R$ and a first 
quadratic transform $T$ of $R$, the inequality 
$$\ell_R(R/I) > \ell_T(T/I^T) $$
holds true (\cite[Theorem 4.5]{Ko}). 

The ideal $I(M)$ of $M$ behaves well under transforms. 
Therefore, as in the ideal case, taking a transform $MT$ improves the module $M$.  

\begin{Proposition}\label{2.5}
Let $M$ be a finitely generated torsion-free $R$-module and $T$ a first quadratic transform of $R$. Then the equality 
$$I(MT)=I(M)^T$$
holds true. 
\end{Proposition}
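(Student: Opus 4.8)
The plan is to work directly over $T$ and to extract $I(MT)$ and $I(M)^T$ from the short exact sequence
$$0 \longrightarrow (MT)^{**}/MT \longrightarrow FT/MT \longrightarrow FT/(MT)^{**} \longrightarrow 0 ,$$
where $FT=F\otimes_R T$ is free (as $F$ is free) and $MT=\Im(M\otimes_R T\to FT)$. Write $T=S_Q$ with $S=R[\frac{\fkm}{x}]$, $x\in\fkm\setminus\fkm^2$ and $Q\supseteq\fkm S$, so that $\fkm T=xT$, and put $r=\ord(I(M))$. The case where $M$ is free is trivial, so assume $M$ is not free, whence $r\ge 1$. First I would record two formal reductions. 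Right exactness of $-\otimes_R T$ gives $FT/MT\cong (F/M)\otimes_R T$, and since zeroth Fitting ideals commute with base change, $\Fitt_0(FT/MT)=I(M)\,T$. Moreover, from $I(M)\subseteq\fkm^r$ we get $I(M)S\subseteq\fkm^r S=x^r S$, hence $I(M)S=x^r\,(I(M)S:_S x^r)=x^r\,I(M)^S$, and after localizing at $Q$, $I(M)\,T=x^r\,I(M)^T$. Since also $\Fitt_0\bigl((MT)^{**}/MT\bigr)=I(MT)$ by definition, the theorem will follow once I establish
$$\Fitt_0\bigl(FT/(MT)^{**}\bigr)=x^r T \qquad\text{and}\qquad \Fitt_0(FT/MT)=\Fitt_0\bigl((MT)^{**}/MT\bigr)\cdot\Fitt_0\bigl(FT/(MT)^{**}\bigr),$$
because together these give $x^r\,I(M)^T=I(MT)\cdot x^r T$, and cancelling $x^r$ in the domain $T$ yields $I(MT)=I(M)^T$.

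For the first equality, set $W=FT/(MT)^{**}$. The submodule $(MT)^{**}/MT$ is the largest finite-length submodule $H^0_{\fkm_T}(FT/MT)$ of $FT/MT$ — it is isomorphic to $H^1_{\fkm_T}(MT)$, exactly as in the situation over $R$ recalled in Section 2 — so $W$ has positive depth. On the other hand $FT/MT$, hence $W$, is killed by a power of $x$, so $\dim_T W\le 1$, while $FT/MT$ surjects onto $T/\fkm T=T/xT$, a one-dimensional ring, so $\dim_T W=1$. Thus $W$ is a Cohen--Macaulay $T$-module of dimension one; the Auslander--Buchsbaum formula gives $\pd_T W=1$, so $W$ has a free resolution $0\to T^c\xrightarrow{\psi}T^c\to W\to 0$ with $\psi$ a square matrix, and $\Fitt_0(W)=(\det\psi)$. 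Since $\Supp W=V(\det\psi)$ and $T/xT$ is a domain (a localization of the polynomial ring $S/xS$), we obtain $\det\psi=x^{\rho}\cdot(\text{unit})$, hence $\Fitt_0(W)=x^{\rho}T$. To compute $\rho$, localize at the height-one prime $xT$: the quotient $(MT)^{**}/MT$ being of finite length, $(MT)^{**}_{xT}=(MT)_{xT}$, so $W_{xT}\cong (F/M)\otimes_R T_{xT}$ and $\rho=\ell_{T_{xT}}(W_{xT})=v_{xT}\bigl(\Fitt_0((F/M)\otimes_R T_{xT})\bigr)=v_{xT}\bigl(I(M)\,T_{xT}\bigr)$. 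Now $T_{xT}=R[\frac{\fkm}{x}]_{(x)}$ is the valuation ring of the order valuation $\ord$, and $v_{xT}$ restricts to $\ord$ on $R$; therefore $\rho=\min\{\ord(g):g\in I(M)\}=\ord(I(M))=r$.

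For the multiplicativity, I would apply the horseshoe lemma to the short exact sequence using a free resolution $P_\bullet$ of $Z:=(MT)^{**}/MT$ together with the resolution of $W$ found above. This produces a presentation of $FT/MT$ by a block upper-triangular matrix whose diagonal blocks are a presentation matrix of $Z$ and the matrix $\psi$. As $FT/MT$ is a torsion $T$-module, $\Fitt_0(FT/MT)$ is its ideal of maximal minors; and because the lower-left block vanishes, any maximal minor using fewer than all $c$ columns of the right-hand block is zero, so that ideal equals $(\det\psi)\cdot\Fitt_0(Z)=\Fitt_0(W)\cdot\Fitt_0(Z)$, as required.

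The step I expect to demand the most care is the first one: identifying $W=FT/(MT)^{**}$ as a Cohen--Macaulay $T$-module of dimension one, so that its zeroth Fitting ideal is principal and a power of $x$, and then pinning down that power as $\ord(I(M))$ by passing to the order valuation ring. Once that is in place, the multiplicativity is a routine block-determinant computation, and the two reductions at the outset are purely formal.
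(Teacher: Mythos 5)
Your argument is correct. Note that the paper itself offers no proof of Proposition \ref{2.5} --- it simply cites Kodiyalam [Ko, Proposition 4.7] --- so the relevant comparison is with that source, and your route is essentially the standard one taken there: split $FT/MT$ along the bidual via $0 \to (MT)^{**}/MT \to FT/MT \to FT/(MT)^{**} \to 0$, show the right-hand term contributes exactly $x^{r}T$ to the zeroth Fitting ideal, and divide it out. All the individual steps check out: $\Fitt_0(FT/MT)=I(M)T=x^rI(M)^T$ is correct base change plus the definition of the transform; $(MT)^{**}$ is reflexive over the two-dimensional regular local ring $T$, hence free, so $W=FT/(MT)^{**}$ has $\pd_T W\le 1$ and positive depth, and your Cohen--Macaulayness, square presentation, and $\det\psi=(\hbox{unit})\cdot x^{\rho}$ conclusions follow (one could shortcut slightly: the inclusion $(MT)^{**}\hookrightarrow FT$ of free modules of equal rank is itself the square presentation of $W$); the identification of $\rho$ with $\ord(I(M))$ by localizing at the height-one prime $xT$ and using that $T_{xT}=S_{xS}$ is the order valuation ring of $R$ is the key computation and is carried out correctly; and the block-triangular horseshoe argument for $\Fitt_0(FT/MT)=\Fitt_0(Z)\Fitt_0(W)$ is valid precisely because $\psi$ is square with nonzerodivisor determinant, so every maximal minor must use all $c$ columns of the $\psi$-block. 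The only spots where the write-up leans on unstated standard facts are the surjection $FT/MT\twoheadrightarrow T/xT$ (which follows by tensoring any surjection $F/M\twoheadrightarrow R/\fkm$ with $T$) and the identification $(MT)^{**}/MT=H^0_{\fkm_T}(FT/MT)$ (which follows from $\depth W\ge 1$, so it is best derived after, not before, the $\pd_T W\le 1$ observation); neither is a gap.
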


\begin{proof}
See \cite[Proposition 4.7]{Ko}. \end{proof}

Before closing this preliminary section, we fix notations we will use in the rest of this article. 

\begin{Notation}
Let $A$ be an arbitrary Noetherian ring and let $A^n=At_1+\dots +At_n$ be a free $A$-module of rank $n>0$ 
with free basis $t_1, \dots , t_n$. 
\begin{itemize}
\item For a submodule $L=\langle f_1, \dots , f_m \rangle$ of $A^n$ generated by $f_1, \dots , f_m$, we denote the associated matrix
$$\tilde L:=(a_{ij}) \in \Mat_{n \times m}(A)$$
where $f_j=a_{1j}t_1+\dots +a_{nj}t_n$ for $j=1, \dots , m$. 
\item Conversely, for a matrix $\varphi=(b_{ij}) \in \Mat_{n \times m}(A)$, we denote the associated submodule of $A^n$
$$\langle \varphi \rangle=\langle g_1, \dots , g_m \rangle$$
where $g_j=b_{1j}t_1+\dots +b_{nj}t_n$ for $j=1, \dots , m$.
\item For two matrices $\varphi \in \Mat_{n \times m}(A)$ and $\psi \in \Mat_{n \times m'}(A)$ with the same number of rows, we define a relation $\sim$ as 
$$\varphi \sim \psi \Leftrightarrow \langle \varphi \rangle \cong \langle \psi \rangle \ \text{as $A$-modules}$$
\item For a matrix $\varphi=(b_{ij}) \in \Mat_{n \times m}(A)$, we denote the ideal in $A$ 
generated by all the $t$-minors of $\varphi$ 
$$I_{t}(\varphi)$$
\end{itemize}
\end{Notation}

%%%%%%%%%%%%%%%%%%%%%%%%%%%%%%%%%%%%%%%%%%%%%%%%%%%%%
\section{Integrally closed modules of rank two}
%%%%%%%%%%%%%%%%%%%%%%%%%%%%%%%%%%%%%%%%%%%%%%%%%%%%%

Recall that $R$ is a two-dimensional regular local ring with the maximal ideal $\fkm$, 
infinite residue field $R/\fkm$. Throughout this section, we consider 
\begin{itemize}
\item a fixed regular system of parameters $x, y$ for $R$, that is, $\fkm=(x,y)$, and 
\item an $\fkm$-primary monomial ideal $I$ with respect to $x, y$.  
\end{itemize}
We write the monomial ideal $I$ as 
\begin{align}\label{mono}
I &=(x^{a_i}y^{b_i} \mid 0 \leq i \leq r)=(x^{a_0}, x^{a_1}y^{b_1}, \dots , x^{a_{r-1}}y^{b_{r-1}}, y^{b_r}) 
\end{align}
where 
\begin{align*}
a_0>a_1>\dots >a_{r-1}>a_r:=0 \ \text{and} \ b_0:=0<b_1<\dots <b_{r-1}<b_r. 
\end{align*}

We begin with the following. 

\begin{Lemma}\label{3.1}
For the monomial ideal $I$, we have the following. 
\begin{enumerate}
\item $\mu_R(I)=r+1$
\item $\ord(I)=\min\{ a_i+b_i \mid 0 \leq i \leq r\}$
\item $\ell_R(R/I+(x+y))=\ord(I)$
\end{enumerate}
\end{Lemma}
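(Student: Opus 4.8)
The three assertions have rather different flavours, and I would treat them in order. For (1), the plan is to observe that the $r+1$ monomials in the displayed generating set are already the minimal generating set of $I$. No one of them divides another: $x^{a_i}y^{b_i}\mid x^{a_j}y^{b_j}$ would force $a_i\le a_j$ and $b_i\le b_j$ simultaneously, which is impossible for $i\neq j$ since $a_0>a_1>\dots>a_r$ decreases while $b_0<b_1<\dots<b_r$ increases. Because a monomial ideal with respect to the fixed regular system of parameters $x,y$ admits a minimal generating set consisting of monomials, and that minimal monomial generating set is unique, the displayed list must be it, so $\mu_R(I)=r+1$. The one point that wants a word of care is that $R$ is an abstract two-dimensional regular local ring rather than literally a power series ring, so to justify ``the minimal generators may be taken monomial'' I would pass to the completion $\hat R$ (which leaves $\mu_R$ unchanged) or argue through $\operatorname{gr}_{\fkm}(R)$.

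For (2), I would use that $\operatorname{gr}_{\fkm}(R)\cong(R/\fkm)[X,Y]$ is a polynomial ring in two indeterminates, since $R$ is regular of dimension two. Then a monomial $x^{a}y^{b}$ has $\fkm$-adic order exactly $a+b$, its leading form being $X^{a}Y^{b}$, which is nonzero in degree $a+b$. For each $n$, the fact that $\fkm^{n}$ is an ideal gives $I\subset\fkm^{n}$ if and only if every generator $x^{a_i}y^{b_i}$ lies in $\fkm^{n}$, i.e. $a_i+b_i\ge n$ for all $i$; and an index realizing $m:=\min_i(a_i+b_i)$ exhibits a generator in $I\setminus\fkm^{m+1}$. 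Hence $\ord(I)=m$.

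For (3), set $z=x+y$. Since $(z,y)=(x,y)=\fkm$ and $R$ is a two-dimensional regular local ring, the pair $z,y$ is again a regular system of parameters, so $\bar R:=R/(z)$ is a discrete valuation ring with uniformizer $\bar y$ (the image of $y$), and $\bar x=-\bar y$ there. Consequently the image of $x^{a_i}y^{b_i}$ in $\bar R$ equals $(-1)^{a_i}\bar y^{\,a_i+b_i}$, a unit times $\bar y^{\,a_i+b_i}$, so the extension $\bar I$ of $I$ to $\bar R$ is the principal ideal $(\bar y^{\,m})$ with $m=\min_i(a_i+b_i)=\ord(I)$ by (2). Since $R/(I+(z))$ is an $\bar R$-module and $\bar R$ has the same residue field as $R$, its $R$-length equals its $\bar R$-length, whence
\[
\ell_R\bigl(R/(I+(x+y))\bigr)=\ell_{\bar R}\bigl(\bar R/(\bar y^{\,m})\bigr)=m=\ord(I).
\]
None of these steps is genuinely hard; the only mild subtlety is the reduction to monomial minimal generators in part (1) over an abstract regular local ring, which is precisely why it is convenient to invoke the completion or the associated graded ring, and the rest is direct computation.
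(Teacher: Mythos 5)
Parts (2) and (3) of your proposal are correct and essentially the paper's own argument: for (3) the paper likewise reduces modulo $x+y$, using the identity $x^{i+1}y^{j-1}+x^{i}y^{j}=x^{i}y^{j-1}(x+y)$ to get $I+(x+y)=(x^{r_0},x+y)$ and then reading off the length in the discrete valuation ring $R/(x+y)$; your version with $\bar y$ as uniformizer is the same computation.

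Part (1) is where you diverge, and where there is a genuine soft spot. The paper proves $\mu_R(I)=r+1$ by a direct syzygy computation: from $\sum_i\alpha_i x^{a_i}y^{b_i}=0$ it isolates each term, e.g. $\alpha_i x^{a_i}y^{b_i}\in(x^{a_{i-1}},y^{b_{i+1}})$, and uses the colon formula $(x^{a},y^{b}):x^{c}y^{d}=(x^{a-c},y^{b-d})$ for the regular sequence $x,y$ to conclude $\alpha_i\in(x^{a_{i-1}-a_i},y^{b_{i+1}-b_i})\subset\fkm$, whence minimality by Nakayama. This uses nothing beyond $x,y$ being a regular sequence and is valid over any regular local ring. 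Your route --- no generator divides another, hence the list is the unique minimal monomial generating set --- has the right combinatorics, but the transfer principle you invoke is exactly the nontrivial content, and neither of your proposed justifications closes it as stated. Passing to $\hat R$ only lands you in a power series ring over a field in the equicharacteristic case; in mixed characteristic $\hat R$ is a (possibly ramified) quotient of a power series ring over a complete discrete valuation ring, and the naive monomial calculus is not available off the shelf. The route through $\operatorname{gr}_{\fkm}(R)$ is worse: to rule out $x^{a_i}y^{b_i}\in J:=(x^{a_j}y^{b_j}\mid j\neq i)$ you would need $X^{a_i}Y^{b_i}\notin\operatorname{in}(J)$, but $\operatorname{in}(J)$ may a priori be strictly larger than the ideal generated by the initial forms of the listed generators, so non-membership does not descend to the associated graded ring without first proving these generators form a standard basis --- which is essentially what you are trying to prove. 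The fact you want (membership of monomials in a monomial ideal on a regular sequence is governed by divisibility) is true and can be cited from Kiyek--St\"uckrad, which the paper references, so your part (1) is reparable; but the paper's colon-ideal argument is more elementary and self-contained, and I would adopt it.
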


\begin{proof}
We show the assertion (1). Suppose that 
$$\sum_{i=0}^{r} \alpha_{i}(x^{a_{i}}y^{b_{i}})=0$$ 
where $\alpha_{i} \in R$. Then $\alpha_{0} x^{a_{0}} \in (y^{b_{1}})$ and thus $\alpha_{0} \in (y^{b_{1}})$. Similarly, $\alpha_{r} \in (x^{a_{r-1}})$ because $\alpha_{r} y^{b_{r}} \in (x^{a_{r-1}})$. 
Let $1 \leq i \leq r-1$. Since $\alpha_{i} (x^{a_{i}}y^{b_{i}}) \in (x^{a_{i-1}}, y^{b_{i+1}})$, 
it follows that 
$$\alpha_{i} \in (x^{a_{i-1}-a_{i}}, y^{b_{i+1}-b_{i}}). $$  
Thus, $\alpha_{i} \in \fkm$ for all $0 \leq i \leq r$. 
This shows that $\mu_{R}(I)=r+1$. 

The assertion (2) is easy to see. We show the assertion (3). 
Let $r_{0}=\ord(I)$. Note that for any $i \geq 0$ and $j \geq 1$, 
$$x^{i+1}y^{j-1}+x^{i}y^{j}=x^{i}y^{j-1}(x+y). $$ 
Thus, $I+(x+y)=(x^{r_{0}}, x+y)$, and we get that $\ell_{R}(R/I+(x+y))=r_{0}=\ord(I). $ \end{proof}

Here we consider the following modules associated to the monomial ideal $I$ and an integer $k$.
These play a central role in this article. 

\begin{Definition}\label{3.2}
Let $1 \leq k < b_r$ be an integer. Then we define a module $M_k$ associated to the monomial ideal $I$ and 
the integer $k$ as follows$:$
\begin{align*}
M_k &:=\left\langle
\begin{pmatrix}
x^{a_0-1} & \cdots & x^{a_i-1}y^{b_i} & \cdots & x^{a_{r-1}-1}y^{b_{r-1}} & y^k & 0 \\
0 &\cdots &  0 & \cdots & 0 & x & y^{b_r-k}
\end{pmatrix} \right\rangle \\
& \subset F:=
\left\langle
\begin{pmatrix}
1 & 0 \\
0 & 1
\end{pmatrix}
\right\rangle. 
\end{align*}
The module $M_k$ will be denoted by $M_k(I)$ when we need to 
emphasize the defining monomial ideal $I$. 
\end{Definition}

The module $M_k$ clearly satisfies $\Fitt_0(F/M_k)=I_2(\tilde{M_k}) \supset I$, and, hence,
\begin{itemize}
\item the quotient $F/M_k$ has finite length, 
\item $\rank_R(M_k)=2$, and 
\item $I(M_k)=I_2(\tilde{M_k})$. 
\end{itemize}

Moreover, we have the following. 

\begin{Lemma}\label{3.3}
Let $1 \leq k < b_r$. Then the module $M_k$ satisfies the following. 
\begin{enumerate}
\item $\mu_R(M_k)=r+2$
\item $I(M_k)=I$, if $b_i+b_r-k \geq b_{i+1}$ for all $0 \leq i \leq r-1$
\end{enumerate}
\end{Lemma}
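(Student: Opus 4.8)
The plan is to read everything off the associated matrix $\tilde{M_k}$ together with Lemma~\ref{3.1}. For part (1), I would compute $\mu_R(M_k)$ via Proposition~\ref{2.3}: once I know $\ord(I(M_k))$, the equality $\mu_R(M_k)=\ord(I(M_k))+\rank_R(M_k)=\ord(I(M_k))+2$ follows provided $M_k$ is contracted from some $S=R[\frac{\fkm}{z}]$ with $z\in\fkm\setminus\fkm^2$. So the strategy is: first establish part (2), i.e. $I(M_k)=I$ under the stated hypothesis; then use Lemma~\ref{3.1}(2),(3) to find $z$ (the natural candidate is $z=x+y$, for which $\ell_R(R/I+(x+y))=\ord(I)$) and invoke the last sentence of Proposition~\ref{2.3} to conclude $M_k$ is contracted from $R[\frac{\fkm}{x+y}]$, whence $\mu_R(M_k)=\ord(I)+2=\ord(I(M_k))+2=r+2$ — wait, this needs $\ord(I)=r$, which is not assumed, so instead I should argue $\mu_R(M_k)=\ord(I(M_k))+2$ and separately that the displayed $r+2$ generators are minimal. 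Actually the cleanest route: show directly that the $r+2$ columns of $\tilde{M_k}$ are a minimal generating set by a Nakayama-type argument on $M_k/\fkm M_k$, exactly as in the proof of Lemma~\ref{3.1}(1) — suppose a linear combination of the columns lies in $\fkm F$ and deduce all coefficients lie in $\fkm$, using the triangular shape of the exponent pattern $a_0>a_1>\cdots$ and $b_1<b_2<\cdots$.

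For part (2), the main work is the inclusion $I\subseteq I(M_k)=I_2(\tilde{M_k})$, since the reverse inclusion $I(M_k)\supseteq I$ was already observed right after Definition~\ref{3.2} — no, that observation gives $I(M_k)\supseteq I$ unconditionally, so in fact part (2) amounts to proving $I(M_k)\subseteq I$ under the hypothesis $b_i+b_r-k\ge b_{i+1}$ for $0\le i\le r-1$. So I would enumerate the $2\times 2$ minors of the $2\times(r+2)$ matrix $\tilde{M_k}$. Since the second row is $(0,\dots,0,x,y^{b_r-k})$, a minor using two of the first $r$ columns vanishes; a minor using column $i$ (for $0\le i\le r-1$, i.e. entry $x^{a_i-1}y^{b_i}$) against the column $(y^k,x)^{T}$ gives $x^{a_i-1}y^{b_i}\cdot x = x^{a_i}y^{b_i}\in I$; against the last column $(0,y^{b_r-k})^{T}$ it gives $x^{a_i-1}y^{b_i}\cdot y^{b_r-k}=x^{a_i-1}y^{b_i+b_r-k}$, and here is where the hypothesis enters: $b_i+b_r-k\ge b_{i+1}$ forces $x^{a_i-1}y^{b_i+b_r-k}\in(x^{a_{i}},x^{a_{i+1}}y^{b_{i+1}},\dots)$... let me be careful — I need $x^{a_i-1}y^{b_i+b_r-k}\in I$; since $a_i-1\ge a_{i+1}$ (as $a_i>a_{i+1}$ are integers) and $b_i+b_r-k\ge b_{i+1}$, this monomial is divisible by $x^{a_{i+1}}y^{b_{i+1}}\in I$ when $a_i-1\ge a_{i+1}$, and for $i=r-1$ one uses $a_{r-1}-1\ge 0=a_r$ and $b_{r-1}+b_r-k\ge b_r$ to land in $(y^{b_r})$. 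Finally the minor from columns $(y^k,x)^T$ and $(0,y^{b_r-k})^T$ is $y^k\cdot y^{b_r-k}-0=y^{b_r}\in I$. Hence every generator of $I_2(\tilde{M_k})$ lies in $I$, giving $I(M_k)\subseteq I$, and combined with the reverse inclusion, $I(M_k)=I$.

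The main obstacle I anticipate is bookkeeping in the monomial-divisibility step: one must handle the boundary columns ($i=0$ via $a_0$, and $i=r-1$ via $b_r$) separately from the interior ones, and check that the inequality $b_i+b_r-k\ge b_{i+1}$ combined with the strict integer inequalities $a_i>a_{i+1}$ really suffices to place $x^{a_i-1}y^{b_i+b_r-k}$ inside $I$ in each case; there is a slight subtlety in that we lose one power of $x$ (the matrix entries are $x^{a_i-1}$, not $x^{a_i}$), so the argument genuinely relies on $a_i-1\ge a_{i+1}$ rather than on $I$ itself. The Nakayama argument for minimality in part (1) is routine once one mimics Lemma~\ref{3.1}(1). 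I would also double-check that $\ord(I(M_k))$ computed from $I=I(M_k)$ via Lemma~\ref{3.1}(2) is consistent with $\mu_R(M_k)=r+2$ through Proposition~\ref{2.3}, as a sanity check rather than as the main line of argument.
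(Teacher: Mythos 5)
Your proof of part (2) is essentially identical to the paper's: both enumerate the $2\times2$ minors of $\tilde{M_k}$, observe that the only ones not obviously in $I$ are $x^{a_i-1}y^{b_i+b_r-k}$ for $0\leq i\leq r-1$, and use $a_i-1\geq a_{i+1}$ together with the hypothesis $b_i+b_r-k\geq b_{i+1}$ to write each as a multiple of $x^{a_{i+1}}y^{b_{i+1}}\in I$. For part (1) your intended argument (mimic Lemma \ref{3.1}(1)) is also what the paper does, but note one slip in how you phrase the criterion: the implication ``$\sum\alpha_i f_i\in\fkm F\Rightarrow$ all $\alpha_i\in\fkm$'' is false (take a single $\alpha_i=1$, since every column of $\tilde{M_k}$ already has entries in $\fkm$); the correct hypothesis is $\sum\alpha_i f_i\in\fkm M_k$, or, as the paper actually argues, one shows every relation $\sum\alpha_i f_i=0$ has all $\alpha_i\in\fkm$ --- first extracting $\alpha_r\in(y^{b_r-k})$ and $\alpha_{r+1}\in(x)$ from the second row, then substituting $\alpha_r y^k=\beta y^{b_r}$ into the first row and running the Lemma \ref{3.1}(1) argument there.
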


\begin{proof}
We show the assertion (1). Suppose that 
$$\sum_{i=0}^{r-1} \alpha_{i} \begin{pmatrix}
x^{a_{i}-1}y^{b_{i}} \\ 0 \end{pmatrix}
+\alpha_{r} \begin{pmatrix}
y^{k} \\ x \end{pmatrix}
+\alpha_{r+1} \begin{pmatrix}
0 \\ y^{b_{r}-k} \end{pmatrix}
=\begin{pmatrix} 
0 \\ 0 \end{pmatrix}
$$
where $\alpha_{i} \in R$ for $0 \leq i \leq r+1$. 
Then $\alpha_{r}x+\alpha_{r+1}y^{b_{r}-k}=0$. Thus,  
$\alpha_{r} \in (y^{b_{r}-k})$ and $\alpha_{r+1} \in (x)$.  
Write $\alpha_{r}=\beta y^{b_{r}-k}$ for some $\beta \in R$. 
Then $\alpha_{r}y^{k}=\beta y^{b_{r}}$ and  
$$\sum_{i=0}^{r-1} \alpha_{i}(x^{a_{i}-1}y^{b_{i}}) + \beta y^{b_{r}}=0. $$
Since $\alpha_{0} x^{a_{0}-1} \in (y^{b_{1}})$, $\alpha_{0} \in (y^{b_{1}})$. 
Let $1 \leq i \leq r-1$. Since
$\alpha_{i}(x^{a_{i}-1}y^{b_{i}}) \in (x^{a_{i-1}-1}, y^{b_{i+1}})$, 
it follows that 
$$\alpha_{i} \in (x^{a_{i-1}-a_{i}}, y^{b_{i+1}-b_{i}}).$$ 
Thus, $\alpha_{i} \in \fkm$ for all $0 \leq i \leq r+1$. This shows that $\mu_R(M_k)=r+2$. 

We show the assertion (2). It is clear that 
\begin{align*}
I(M_{k})&=I+I_{2}\begin{pmatrix}
x^{a_{0}-1} & \cdots &  x^{a_{i}-1}y^{b_{i}} & \cdots & x^{a_{r-1}-1}y^{b_{r-1}} & 0 \\
0 & \cdots & 0 & \cdots & 0 & y^{b_{r}-k} 
\end{pmatrix} \\
&=I+(x^{a_{i}-1}y^{b_{i}+b_{r}-k} \mid 0 \leq i \leq r-1). 
\end{align*}
For any $0 \leq i \leq r-1$, 
$$x^{a_{i}-1}y^{b_{i}+b_{r}-k}
=(x^{a_{i+1}}y^{b_{i+1}})x^{a_{i}-1-a_{i+1}}y^{b_{i}+b_{r}-k-b_{i+1}} \in I $$
because $a_{i}>a_{i+1}$ and $b_{i}+b_{r}-k \geq b_{i+1}$. 
Thus, $I(M_{k})=I$. \end{proof}

\begin{Remark}\label{3.4}
{\rm 
The condition in Lemma \ref{3.3} (2): 
$$b_i+b_r-k \geq b_{i+1} \ \text{for all} \ 0 \leq i \leq r-1$$
is satisfied, if either 
\begin{enumerate}
\item $1 \leq k \leq r-1$, or 
\item $r \leq k \leq b_{r-1}$ and $b_r-b_{r-1} \geq b_{i+1}-b_i$ for all $0 \leq i \leq r-1$. 
\end{enumerate}
}
\end{Remark}

\begin{proof}
Let $0 \leq i \leq r-1$. The case (1) follows from  
$$b_{i}+b_{r}-k-b_{i+1}=b_{r}-b_{i+1}+b_{i}-k \geq r-(i+1)+i-k=(r-1)-k \geq 0. $$ 
The case (2) follows from  
$$b_{i}+b_{r}-k-b_{i+1} \geq b_{i}+b_{r}-b_{r-1}-b_{i+1}
=(b_{r}-b_{r-1})-(b_{i+1}-b_{i}) \geq 0. $$ \end{proof}

\begin{Proposition}\label{3.5}
Let $1 \leq k < b_r$. Suppose that the monomial ideal $I$ is integrally closed and $I(M_k)=I$. 
Then the module $M_k$ is contracted from 
$S=R[ \frac{\fkm}{x+y} ]$. 
\end{Proposition}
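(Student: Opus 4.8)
The plan is to verify the characterization of contractedness from Proposition \ref{2.2}, namely condition (2): that $\fkm M_k :_F (x+y) = M_k$. Since $x+y \in \fkm \setminus \fkm^2$ and the containment $M_k \subseteq \fkm M_k :_F (x+y)$ is automatic, the real content is the reverse inclusion $\fkm M_k :_F (x+y) \subseteq M_k$. Alternatively — and this is probably the cleaner route given that Lemma \ref{3.1}(3) is already in hand — I would apply the numerical criterion of Proposition \ref{2.3}: a finitely generated torsion-free module is contracted from some $R[\fkm/x]$ precisely when $\mu_R(M_k) = \ord(I(M_k)) + \rank_R(M_k)$, and moreover it is contracted from $R[\fkm/(x+y)]$ as soon as $\ell_R(R/(I(M_k)+(x+y))) = \ord(I(M_k))$.

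The steps, in order, would be: first, recall from Lemma \ref{3.3}(1) that $\mu_R(M_k) = r+2$; second, note $\rank_R(M_k) = 2$ (already observed after Definition \ref{3.2}); third, invoke the hypothesis $I(M_k) = I$ together with Lemma \ref{3.1}(2) (or simply with the fact that $\ord(I(M_k)) = r$ under the running assumptions on $I$) to get $\ord(I(M_k)) = r$; so the numerical equality $\mu_R(M_k) = r + 2 = \ord(I(M_k)) + \rank_R(M_k)$ holds, giving that $M_k$ is contracted from \emph{some} $R[\fkm/x']$ by Proposition \ref{2.3}. Fourth, to pin down that one may take $x' = x+y$, apply the ``moreover'' clause of Proposition \ref{2.3}: it suffices to check $\ell_R(R/(I(M_k)+(x+y))) = \ord(I(M_k))$, i.e. $\ell_R(R/(I+(x+y))) = r$, which is exactly Lemma \ref{3.1}(3) combined with $I(M_k) = I$ and Lemma \ref{3.1}(2). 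This chains together to yield that $M_k$ is contracted from $S = R[\fkm/(x+y)]$.

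The one place requiring a little care — and the main obstacle, such as it is — is confirming that $\ord(I) = r$ under the stated hypotheses. By Lemma \ref{3.1}(2), $\ord(I) = \min\{a_i + b_i \mid 0 \le i \le r\}$, and since $I$ is $\fkm$-primary with $\mu_R(I) = r+1$ (Lemma \ref{3.1}(1)) one has $\ord(I) \le r$ in general; the hypothesis that $I$ is integrally closed (complete monomial) forces equality, since a complete $\fkm$-primary monomial ideal with $r+1$ generators has order exactly $r$. (This is the standard fact that a complete ideal $I$ satisfies $\mu_R(I) = \ord(I) + 1$ in a two-dimensional regular local ring; see \cite{Ko, ZS}.) Once $\ord(I) = r$ is in place, everything else is a direct citation of Propositions \ref{2.3} and the three parts of Lemma \ref{3.1}, so the proof is short. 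I would write it in two or three sentences: establish the numerical identity, then quote the ``moreover'' part of Proposition \ref{2.3} with $x+y$ playing the role of the special parameter via Lemma \ref{3.1}(3).
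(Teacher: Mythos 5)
Your proposal is correct and follows essentially the same route as the paper: both establish $\ord(I)=r$ from $\mu_R(I)=r+1$ and the fact that a complete ideal is contracted (so $\mu_R(I)=\ord(I)+1$), then verify $\mu_R(M_k)=r+2=\ord(I(M_k))+\rank_R(M_k)$ via Lemma \ref{3.3}(1), and finally invoke the ``moreover'' clause of Proposition \ref{2.3} with $\ell_R(R/I+(x+y))=\ord(I)$ from Lemma \ref{3.1}(3). No gaps.
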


\begin{proof}
Since $I$ is integrally closed, $I$ is contracted and $\mu_R(I)=\ord(I)+1$ by Propositions \ref{2.2} and \ref{2.3}. Thus, $\ord(I(M_k))=\ord(I)=r$ by Lemma \ref{3.1}. It follows that, by Lemma \ref{3.3}, 
$$\mu_R(M_k)=r+2=\ord(I(M_k))+\rank_R(M_k). $$ 
Note that $\ell_R(R/I(M_k)+(x+y))=\ord(I(M_k)) $ by Lemma \ref{3.1}.  
Thus, $M_k$ is contracted from $S=R[\frac{\fkm}{x+y}]$ by Proposition \ref{2.3}. \end{proof}

Here is the main result in this section, which plays an important role in this article. 

\begin{Theorem}\label{3.6}
Suppose that the monomial ideal $I$ is integrally closed. Then for any $1 \leq k \leq r-1$, the module $M_k$ is integrally closed with $I(M_k)=I$. 
\end{Theorem}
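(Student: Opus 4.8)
The plan is to reduce the assertion about $M_k$ to the already-established machinery of contracted modules and first quadratic transforms, and then to run an induction on the colength $\ell_R(R/I)$. By Lemma \ref{3.3} and Remark \ref{3.4}, the hypothesis $1 \leq k \leq r-1$ guarantees $I(M_k) = I$, so that part is free. The crux is integral closedness. By Proposition \ref{3.5}, since $I$ is integrally closed and $I(M_k) = I$, the module $M_k$ is contracted from $S = R[\frac{\fkm}{x+y}]$. Hence, by Proposition \ref{2.4}, $M_k$ is integrally closed over $R$ if and only if $M_k S$ is integrally closed over $S$. So the first real step is to compute the transform $M_k S$ explicitly: localize the matrix $\tilde{M_k}$ at the appropriate maximal ideal $Q$ of $S$ (the one containing $\fkm S$, giving a first quadratic transform $T = S_Q$), factor out the common power $x^r$ coming from the order, and identify the resulting module $M_k T$ as the module $M_{k'}(I^T)$ attached to the transformed ideal $I^T$ over the two-dimensional regular local ring $T$ — for a suitable new regular system of parameters $x', y'$ for $T$ and a suitable integer $k'$ in the admissible range $1 \leq k' \leq \ord(I^T) - 1$.

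The second step is the induction itself. We have $\ell_T(T/I^T) < \ell_R(R/I)$ by \cite[Theorem 4.5]{Ko}, and $I^T$ is again integrally closed (as noted after Proposition \ref{2.4}). At the maximal ideals $Q'$ of $S$ not containing $\fkm S$ we have that $S_{Q'}$ is a DVR, over which every finitely generated torsion-free module is automatically integrally closed, so there the condition is vacuous; this lets us check integral closedness of $M_k S$ locally and reduce to the single first quadratic transform $T$. By the induction hypothesis applied to $(T, I^T, k')$, the module $M_{k'}(I^T) = M_k T$ is integrally closed over $T$; since this holds at the only maximal ideal of $S$ where something needs checking, $M_k S$ is integrally closed over $S$, and then Proposition \ref{2.4} finishes the step. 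The base case is when $\ell_R(R/I)$ is as small as possible subject to $I$ being $\fkm$-primary, complete, and monomial with $\ord(I) = r \geq 1$; there $I^T$ becomes the unit ideal (or $\ord$ drops to force $M_k = F$ free), so $M_k$ is visibly integrally closed.

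The step I expect to be the main obstacle is the explicit identification of $M_k T$ as a module of the same shape $M_{k'}(I^T)$ over $T$: one must choose the point of blow-up (i.e. the maximal ideal $Q \supseteq \fkm S$) compatibly with the monomial structure, track how the exponents $(a_i, b_i)$ and the entries $y^k, x, y^{b_r - k}$ transform under $x \mapsto x$, $y \mapsto x\cdot(\text{unit or new variable})$, and verify that after dividing by $x^r$ and a change of basis of $F$ the presentation matrix is, up to the equivalence $\sim$, exactly that of Definition \ref{3.2} for $I^T$ with some $1 \leq k' \leq \ord(I^T) - 1$. This bookkeeping — ensuring the transformed $k'$ lands in the admissible range so that the induction hypothesis applies, and that $I^T$ is still presented in the normalized monomial form \eqref{mono} — is where the argument has real content; once it is in place, Propositions \ref{2.4}, \ref{2.5}, \ref{3.5} and the colength drop assemble the proof mechanically. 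A secondary point to handle with care is the possibility that $\ord(I^T)$ drops below the index needed, or that $M_k T$ acquires a free direct summand, which must be dealt with as a degenerate (but easy) subcase of the induction.
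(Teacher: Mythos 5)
Your overall skeleton is the same as the paper's: get $I(M_k)=I$ from Lemma \ref{3.3} and Remark \ref{3.4}, use Proposition \ref{3.5} to see that $M_k$ is contracted from $S=R[\frac{\fkm}{x+y}]$, reduce via Proposition \ref{2.4} to checking that $M_kS_Q$ is integrally closed at every maximal ideal $Q$ of $S$, and dismiss the ideals $Q\nsupseteq\fkm S$ because $S_Q$ is then a DVR. However, the step you single out as carrying the real content --- identifying $M_kT$ with $M_{k'}(I^T)$ for some $k'$ in the admissible range, so that an induction on $\ell_R(R/I)$ can run --- mispredicts what actually happens, and the induction built on it is unnecessary. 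First, there is not ``the'' maximal ideal $Q\supseteq\fkm S$: there are many, and the paper treats the two cases $z\notin Q$ and $z\in Q$, where $z=\frac{x}{x+y}$. Second, and more importantly, at every such $Q$ one of the two entries of the column coming from $(y^k,x)$ becomes a unit of $S_Q$ after rewriting in terms of $z$, $1-z$ and $x+y$ and factoring powers of $x+y$ out of the rows (here the hypothesis $1\le k\le r-1$ is used to guarantee the relevant exponents are nonnegative). Consequently $M_kS_Q$ splits off a free summand, $M_kS_Q\cong J\oplus S_Q$ with $J$ an ideal; this is not of the shape $M_{k'}(I^{S_Q})$ in any nontrivial case, since $I_1(J\oplus S_Q)=S_Q$ whereas $I_1(M_{k'})$ is a proper ideal. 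What you set aside as a ``degenerate (but easy) subcase'' is in fact the only case. Once the splitting is in hand, Proposition \ref{2.5} gives $J=I^{S_Q}$, which is integrally closed because $I$ is (as noted after Proposition \ref{2.4}), and the proof closes in one step with no induction on colength and no bookkeeping of $k'$.

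For perspective: the identification you propose is precisely the device the paper uses later, in Theorems \ref{4.9} and \ref{4.11}, where $M_{k_0}S_Q\cong M_{k_0-r+1}(\fkc)$ for a suitable complete monomial ideal $\fkc$ of $S_Q$ --- but those concern $M_r$ and $M_{r+1}$, outside the range $1\le k\le r-1$, and they quote Theorem \ref{3.6} as an ingredient. For the theorem at hand your plan does not assemble into a proof until you actually perform the matrix reduction and discover the free splitting, at which point the inductive scaffolding should be discarded.
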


\begin{proof}
Let $1 \leq k \leq r-1$. By Remark \ref{3.4} (1), it follows that $I(M_k)=I$. 
Since $I$ is integrally closed and $I(M_{k})=I$, the module 
$M_k$ is contracted from $S=R[\frac{\fkm}{x+y}]$ 
by Proposition \ref{3.5}. To show that $M_k$ is integrally closed, it is enough to show that 
$M_kS$ is integrally closed by Proposition \ref{2.4}. 
This is equivalent to that $M_kS_Q$ is integrally closed for every maximal ideal $Q$ of $S$. 

Let $z:=\frac{x}{x+y} \in S$. Then we can write $x=z(x+y)$ and $y=(1-z)(x+y)$ in $S$. 
Thus, the matrix $\tilde{M_kS}$ over $S$ can be written as
\begin{align*}
\tilde{M_kS} &=\begin{pmatrix}
f_0 & \cdots & f_i & \cdots & f_{r-1} & (1-z)^k(x+y)^k & 0 \\
0 & \cdots & 0 & \cdots & 0 & z(x+y) & (1-z)^{b_r-k}(x+y)^{b_r-k}
\end{pmatrix}
\end{align*}
where 
$$f_i=z^{a_i-1}(1-z)^{b_i}(x+y)^{a_i+b_i-1} \ \text{for} \ 0 \leq i \leq r-1. $$
Here we note that 
\begin{itemize}
\item $a_i+b_i-1 \geq r-1 \geq k$ for all $0 \leq i \leq r-1$, 
\item $b_r-k \geq b_{r}-(r-1) \geq 1$.
\end{itemize} 
By considering an $S$-linear map 
$S^{2} \to S^{2}$ represented by a matrix 
$\begin{pmatrix}
(x+y)^{k} & 0 \\
0 & x+y
\end{pmatrix},$ 
we have that 
\begin{equation*}
\tilde{M_kS} \sim \begin{pmatrix}
g_{0} & \cdots & g_{i} & \cdots &  g_{r-1} & (1-z)^k & 0 \\
0 & \cdots & 0 & \cdots & 0 & z & (1-z)^{b_r-k}(x+y)^{b_r-k-1}
\end{pmatrix}
\end{equation*}
where 
$$g_{i}=z^{a_i-1}(1-z)^{b_i}(x+y)^{a_i+b_i-1-k} \ \text{for} \ 0 \leq i \leq r-1.$$ 

Let $Q$ be a maximal ideal of $S$. We show that $M_kS_Q$ is integrally closed. 
When $Q \nsupseteq \fkm S$. Then $S_Q$ is a discrete valuation ring. Thus, $M_kS_Q$ is integrally closed because of 
the fact that any submodule of finitely generated free module over a discrete valuation ring 
is integrally closed. 
Suppose that $Q \supseteq \fkm S$. 
When $z \notin Q$. Then $z$ is a unit of $S_Q$. By elementary matrix operations over $S_Q$, 
$$\tilde{M_kS_Q}\sim 
\begin{pmatrix}
h_{0} & \cdots & h_{i} & \cdots & h_{r-1} & 0 & (1-z)^{b_{r}}(x+y)^{b_r-k-1} \\
0 & \cdots & 0 & \cdots & 0 & 1 & 0
\end{pmatrix}$$
where 
$$h_{i}=(1-z)^{b_i}(x+y)^{a_i+b_i-1-k} \ \text{for} \ 0 \leq i \leq r-1.$$ 
This implies that $M_kS_Q \cong J \oplus S_Q$ for some $QS_Q$-primary ideal $J$ of $S_Q$. 
We then claim that $J$ is integrally closed. 
By Proposition \ref{2.5}, 
$$J=I(M_kS_Q)=I(M_k)^{S_Q}=I^{S_Q}. $$
Since $I$ is integrally closed, its transform $J$ is also integrally closed. 
Thus, $M_kS_Q$ is integrally closed. 
When $z \in Q$. Then $1-z \notin Q$ and it is a unit of $S_Q$. 
By elementary matrix operations over $S_Q$, 
$$\tilde{M_kS_Q}\sim 
\begin{pmatrix}
0 & \cdots & 0 & \cdots & 0 & 1 & 0 \\
h'_{0} & \cdots & h'_{i} & \cdots & h'_{r-1} & 0 & (x+y)^{b_r-k-1} \\
\end{pmatrix} $$
where 
$$h'_{i}=z^{a_i}(x+y)^{a_i+b_i-1-k} \ \text{for} \ 0 \leq i \leq r-1.$$ 
Thus, $M_kS_Q \cong S_Q \oplus J'$ for some $QS_Q$-primary ideal $J'$ of $S_Q$. 
Similarly, it follows that $J'$ is integrally closed. 
Thus, $M_kS_Q$ is integrally closed. 
This completes the proof. \end{proof}

\begin{Remark}
{\rm 
Let $M$ be a finitely generated torsion-free $R$-module, and 
let $\Rees(M)$ be the Rees algebra of $M$ which coincides with the subring $\Im(\Sym_R(M) \to \Sym_R(M^{**}))$
of a polynomial ring $\Sym_R(M^{**})$ over $R$. Suppose that $M$ is integrally closed. Then 
$\Rees(M)$ is a Noetherian normal domain by \cite[Theorem 5.3]{Ko}. Moreover, by \cite[Theorem 4.1]{KK}, 
it is Cohen-Macaulay. Therefore, by Theorem \ref{3.6}, we have a large class of concrete Cohen-Macaulay normal 
Rees algebras of modules. 
}
\end{Remark}

%%%%%%%%%%%%%%%%%%%%%%%%%%%%%%%%%%%%%%%%%%%%%%%%%%%
\section{Indecomposability}
%%%%%%%%%%%%%%%%%%%%%%%%%%%%%%%%%%%%%%%%%%%%%%%%%%%%%

In this section, we investigate the indecomposability of the modules introduced in section 3. 
So, we will work under the same situation and notations in section 3. 
Thus, $I$ is the monomial ideal considered in (\ref{mono}) and 
$M_{k}$ is the associated module introduced in Definition \ref{3.2}. 
The goal of this section is to show that if $\ord (I) \geq 3$, then
we can find $k$ such that $M_{k}$ is indecomposable integrally closed with $I(M_{k})=I$. 
Hereafter, throughout this section, we further assume that the ideal $I$ satisfies 
the following additional condition: 
\begin{equation}\label{*}
\begin{cases}
I \ \text{is integrally closed}, & \\
r=\ord(I) \geq 3, \ \text{and} & \\
a_0 \leq b_r. & \\
\end{cases}
\end{equation}

For the purpose, we first recall some known facts about the integral closure of general monomial ideals 
(not necessarily in a polynomial ring over a field) and its Zariski decomposition. 
We refer the readers to \cite{HbSw, HuSw, KiSt} for more results and the details on general monomial ideals. 

Let $\fka$ be an $\fkm$-primary monomial ideal in $R$ with respect to 
a regular system of parameters $x, y$. Suppose that $\fka$ is generated by a set of 
monomials $\{ x^{v_{i}}y^{w_{i}} \mid 1 \leq i \leq s \}$. Then, as in the usual monomial ideal case, 
one can define the Newton polyhedron ${\rm NP}(\fka)$ of $\fka$ as a convex hull of a set of exponent vectors of 
$\fka$ in $\mathbb R^2$. Namely, 
$${\rm NP}(\fka)=\left\{ (u_{1}, u_{2}) \mid (u_{1}, u_{2}) \geq 
\sum_{i=1}^{s} c_{i}(v_{i}, w_{i}) \ \text{for some} \ c_i \geq 0 \ \text{with} \ \sum_{i=1}^{s} c_{i}=1 \right\}. $$
Then the integral closure $\bar{\fka}$ of $\fka$ can be described as 
$$\bar{\fka}=( x^{e_{1}}y^{e_{2}} \mid (e_{1}, e_{2}) \in \mathbb Z_{\geq 0}^2 \cap {\rm NP}(\fka) ). $$
Thus, $\bar \fka$ is again a monomial ideal with respect to $x, y$. 

Let $\{(p_{i}, q_{i}) \mid 1 \leq i \leq t \}$ be a set of the vertices of ${\rm NP}(\fka)$ with 
$p_{0}>p_{1}>\dots >p_{t}=0$ and $q_{0}=0<q_{1}<\dots <q_{t}$. 
Then, by the above description of $\bar{\fka}$, it follows that 
$$\bar{\fka}=\bar{(x^{p_{i}}y^{q_{i}} \mid 1 \leq i \leq t)}. $$
Moreover, one can see that 
$$\bar{\fka}=\prod_{i=1}^t \bar{(x^{p_{i-1}-p_{i}}, y^{q_{i}-q_{i-1}})}. $$
Here we note that for a pair of positive integers $p', q'$ with ${\rm gcd}(p', q')=d$, 
$$\bar{(x^{p'}, y^{q'})}=\bar{(x^{p}, y^{q})}^{d}$$
where $p'=dp$ and $q'=dq$, and that for any $p, q>0$ with ${\rm gcd}(p, q)=1$, 
$$\bar{(x^p, y^q)} \ \text{is simple}.  $$
See \cite{GrKi} for more details on the above special simple ideals. 

Namely, for any $\fkm$-primary complete monomial ideal $\fka$ in $R$, 
every simple factor in the Zariski decomposition 
of $\bar{\fka}$ is a monomial ideal with the following special form: 
$$\bar{(x^{p}, y^{q})} \ \text{where} \ {\rm gcd}(p, q)=1. $$
We will illustrate these decompositions in Examples \ref{5.2} and \ref{5.3}. 
See \cite{BiTr, Qui} for more detailed and related results on the decomposition of usual monomial ideals.

Now, we begin with the following observation. This will be often used in our arguments. 

\begin{Observation}\label{4.1}
{\rm Let $1 \leq k < b_{r}$. We first note that the ideal $I$ satisfies that 
\begin{itemize}
\item $a_{r-1}=1$. 
\end{itemize}
This follows from the additional assumption that $I$ is integrally closed 
and $a_0 \leq b_r$. 
Thus, the ideal $I$ is of the form 
$$I=(x^{a_0}, x^{a_1}y^{b_1}, \dots , xy^{b_{r-1}}, y^{b_r}), $$
and the associated module $M_k$ is
$$M_k =\left\langle
\begin{pmatrix}
x^{a_0-1} & \cdots &  x^{a_i-1}y^{b_i} & \cdots & y^{b_{r-1}} & y^k & 0 \\
0 & \cdots & 0 & \cdots & 0 & x & y^{b_r-k}
\end{pmatrix} \right\rangle. $$
It follows that the other Fitting ideal is clearly of the form 
$$I_1(M_k)=(x, y^{\ell}) \ \text{where} \ \ell=\min\{b_{r-1}, k, b_r-k\}.$$ 
Here we assume that 
$M_{k}$ is integrally closed with $I(M_k)=I$, and $M_k$ is decomposable.
Then 
$$M_k \cong J_1 \oplus J_2$$
for some $\fkm$-primary ideals $J_1, J_2$ in $R$. 
Note that both $J_1$ and $J_2$ are integrally closed ideals in $R$ because 
$M_{k}$ is assumed to be integrally closed 
and $\bar{J_{1}\oplus J_{2}}=\bar{J_{1}} \oplus \bar{J_{2}}$. 
Consider the associated Fitting ideals of $M_{k}$. Then we have equalities
\begin{align*}
J_1J_2 &=I(M_k)=I, \\
J_1+J_2 &= I_1(M_k)=(x, y^{\ell}). 
\end{align*}
The first equality implies that both $J_1$ and $J_2$ are a part of factors in the 
Zariski decomposition 
of $I$. This implies that both $J_{1}$ and $J_{2}$ are monomial ideals. 
Thus, the sum $J_1+J_2$ is also a monomial ideal. Therefore, 
as in the usual monomial ideal case (see \cite[Corollary 3]{KiSt} for instance), 
the second equality implies that 
\begin{itemize}
\item $x \in J_1$ or $x \in J_2$, and 
\item $y^{\ell} \in J_1$ or $y^{\ell} \in J_2$.
\end{itemize} 
We may assume that $x \in J_1$. Thus, $\ord(J_1)=1. $ 
If $y^{\ell} \in J_2$, then $xy^{\ell} \in J_1J_2=I$. 
Therefore, if $xy^{\ell} \notin I$, then $y^{\ell} \in J_1$ so that $J_1=(x, y^{\ell})$ because
$(x, y^{\ell}) \subset J_1 \subset J_1+J_2 =(x, y^{\ell}).$
Consequently, we can summarize the observation as follows: 
}

\

If the module $M_{k}$ is integrally closed with $I(M_k)=I$, and $M_k$ is decomposable, then 
\begin{enumerate}
\item the monomial ideal $I$ has a simple factor of order $1$ in the Zariski decomposition. 
\item Moreover, if $xy^{\ell} \notin I$ where $\ell=\min\{b_{r-1}, k, b_r-k\}$, 
then $I$ has a simple factor of the form 
$(x, y^{\ell})$. 
\end{enumerate}
\end{Observation}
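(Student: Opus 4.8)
The plan is to assume that $M_k$ splits, to extract two factorizations of $I$ from the Fitting ideals $I(M_k)$ and $I_1(M_k)$, and then to read off the two conclusions using Zariski's unique factorization theorem together with elementary combinatorics of monomial ideals; throughout we stay under the standing hypothesis (\ref{*}). The first thing I would pin down is the reduction $a_{r-1}=1$. Arguing by contradiction, if $a_{r-1}\ge 2$ then $a_0>a_1>\dots>a_{r-1}\ge 2$ gives $a_i\ge r+1-i$ for $0\le i\le r-1$, while $0=b_0<b_1<\dots$ gives $b_i\ge i$, so $a_i+b_i\ge r+1$ for $0\le i\le r-1$; moreover $a_r+b_r=b_r\ge a_0\ge r+1$, using $a_0\le b_r$. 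By Lemma \ref{3.1}(2) this would force $\ord(I)\ge r+1$, contradicting $\ord(I)=r$. With $a_{r-1}=1$ in hand, the matrix $\tilde{M_k}$ of Definition \ref{3.2} has first row $(x^{a_0-1},\dots,x^{a_{r-2}-1}y^{b_{r-2}},\,y^{b_{r-1}},\,y^k,\,0)$ and second row $(0,\dots,0,\,x,\,y^{b_r-k})$, and every entry $x^{a_i-1}y^{b_i}$ with $i\le r-2$ is a multiple of $x$; hence the ideal generated by all entries of $\tilde{M_k}$ equals $(x,\,y^{b_{r-1}},\,y^{k},\,y^{b_r-k})=(x,y^{\ell})$ with $\ell=\min\{b_{r-1},k,b_r-k\}$. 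Since $F=M_k^{**}$ is free of rank $2$ and $F/M_k=\Coker(\tilde{M_k})$, this says precisely that $I_1(M_k)=(x,y^{\ell})$.

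Next I would turn decomposability into two ideal equalities. Any splitting of the rank-$2$ torsion-free module $M_k$ is a sum of two rank-$1$ torsion-free modules, which up to isomorphism are $\fkm$-primary ideals (they are $\fkm$-primary because $F/M_k$ has finite length); write $M_k\cong J_1\oplus J_2$. Since integral closure of a direct sum is the direct sum of the integral closures, $M_k$ integrally closed forces $J_1$ and $J_2$ integrally closed. Identifying $F\cong R^2$, we get $F/M_k\cong R/J_1\oplus R/J_2$, presented by the block matrix obtained by placing generators of $J_1$ and of $J_2$ in disjoint rows, and computing its $2\times2$ and $1\times1$ minors gives
\begin{align*}
J_1J_2 &=\Fitt_0(F/M_k)=I(M_k)=I, \\
J_1+J_2 &=\Fitt_1(F/M_k)=I_1(M_k)=(x,y^{\ell}).
\end{align*}

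To finish, I would invoke Zariski's unique factorization theorem: from $J_1J_2=I$ with $J_1,J_2$ complete, each $J_t$ is a product of some of the simple complete factors of $I$, and since $I$ is a complete monomial ideal every such simple factor is monomial (as recalled in Section 4); hence $J_1$, $J_2$ and therefore $J_1+J_2=(x,y^{\ell})$ are monomial ideals. As $\ell\ge 1$, the monomial $x$ is a minimal generator of $J_1+J_2$, and so a minimal monomial generator of $J_1$ or of $J_2$ (compare \cite[Corollary 3]{KiSt}); relabelling, $x\in J_1$. Then $J_1\subseteq\fkm$ but $J_1\not\subseteq\fkm^2$, so $\ord(J_1)=1$, and an ideal of order $1$ is simple since a product of two proper ideals of the local ring $R$ lies in $\fkm^2$. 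Thus $J_1$ is a simple complete factor of $I$ of order $1$, giving (1). For (2), assume in addition $xy^{\ell}\notin I$. The monomial $y^{\ell}\in J_1+J_2$ is divisible by a minimal monomial generator of $J_1$ or of $J_2$, necessarily of the form $y^{c}$ with $c\le\ell$, so $y^{\ell}\in J_1$ or $y^{\ell}\in J_2$; the second case would give $xy^{\ell}\in J_1J_2=I$, a contradiction, so $y^{\ell}\in J_1$, and then $(x,y^{\ell})\subseteq J_1\subseteq J_1+J_2=(x,y^{\ell})$ forces $J_1=(x,y^{\ell})$, a simple factor of $I$ of the required form.

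The step I expect to be the real obstacle is the reduction $a_{r-1}=1$ — the only place the hypothesis $a_0\le b_r$ in (\ref{*}) genuinely enters — together with the application of Zariski's unique factorization to $J_1J_2=I$ that makes $J_1$ and $J_2$ monomial; granting those, what remains is a short bookkeeping with Fitting ideals and with minimal generators of monomial ideals.
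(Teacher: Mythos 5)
Your proposal is correct and follows essentially the same route as the paper: compute $I_1(M_k)=(x,y^{\ell})$, turn a splitting $M_k\cong J_1\oplus J_2$ into the Fitting-ideal equalities $J_1J_2=I$ and $J_1+J_2=(x,y^{\ell})$, invoke Zariski's unique factorization to see that $J_1,J_2$ are complete monomial factors of $I$, and then read off the two conclusions from the monomial generators of $J_1+J_2$. The only difference is that you spell out the verification of $a_{r-1}=1$ (via $\ord(I)=\min_i(a_i+b_i)=r$ and $a_0\le b_r$), which the paper merely asserts; this is a welcome addition but not a change of method.
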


By Observation \ref{4.1}, we can readily get the following. 

\begin{Theorem}\label{4.2}
Suppose that the monomial ideal $I$ has no simple factor of order $1$ in the Zariski decomposition.
Then for any $1 \leq k \leq r-1$, 
the associated module $M_k$ is indecomposable integrally closed with $I(M_k)=I$. 
\end{Theorem}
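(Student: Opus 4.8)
The plan is simply to assemble the two facts that have just been established. First I would apply Theorem \ref{3.6}: since $I$ is integrally closed (this is part of the standing hypothesis (\ref{*}) in force throughout the section), for every $k$ with $1 \leq k \leq r-1$ the module $M_k$ is integrally closed and $I(M_k)=I$. This already disposes of everything in the conclusion except the indecomposability of $M_k$.

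For indecomposability I would argue by contradiction, directly invoking Observation \ref{4.1}. Suppose $M_k$ is decomposable. Since $M_k$ is integrally closed with $I(M_k)=I$ — which is exactly the situation treated in Observation \ref{4.1} — part (1) of that observation yields that the monomial ideal $I$ must have a simple factor of order $1$ in its Zariski decomposition. (Concretely, a splitting $M_k \cong J_1 \oplus J_2$ into $\fkm$-primary integrally closed ideals forces, after reordering, $x \in J_1$, hence $\ord(J_1)=1$, while $J_1 J_2 = I$ forces $J_1$ to be a sub-product of the simple factors occurring in the Zariski decomposition of $I$; so at least one such factor has order $1$.) This contradicts the hypothesis of the theorem that $I$ has no simple factor of order $1$. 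Therefore $M_k$ is indecomposable, which completes the proof.

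I do not expect any real obstacle here: all of the substantive work lies in Theorem \ref{3.6} (integral closedness of $M_k$) and in Observation \ref{4.1} (the structure forced on a hypothetical direct-sum decomposition), both of which are already available. The only point requiring a moment's care is to confirm that the hypotheses of Observation \ref{4.1} genuinely hold for the chosen range $1 \leq k \leq r-1$, namely that $M_k$ is integrally closed with $I(M_k)=I$; but this is precisely what Theorem \ref{3.6} supplies.
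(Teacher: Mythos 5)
Your proposal is correct and is essentially identical to the paper's own proof: the paper likewise combines Theorem \ref{3.6} (to get that $M_k$ is integrally closed with $I(M_k)=I$ for $1 \leq k \leq r-1$) with Observation \ref{4.1}(1) to derive a contradiction from a hypothetical decomposition. No gaps.
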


\begin{proof}
Let $1 \leq k \leq r-1$. Since $I$ is integrally closed, $M_{k}$ is integrally closed with $I(M_{k})=I$ by Theorem \ref{3.6}. 
Suppose that $M_{k}$ is decomposable. By Observation \ref{4.1}, the ideal $I$ has a simple factor of 
order $1$ in the Zariski decomposition. This is a contradiction. 
\end{proof}

We next consider the case that the ideal $I$ has a simple factor of order $1$ 
in the Zariski decomposition. 
We then divide the case into the following two cases. Here we write $J \mid I$ if $I=J\fkb$ for some ideal 
$\fkb$ in $R$. 
$$\begin{cases}
\text{Case I} & (x, y^{\ell}) \nmid I \ \text{for some} \ 1 \leq \ell \leq r-1 \\
\text{Case II} & (x, y^{\ell}) \mid I \ \text{for any} \ 1 \leq \ell \leq r-1 \\
\end{cases}
$$

We begin with Case I. 

\begin{Observation}\label{4.3}
{\rm 
Suppose Case I and let $k_0=\min\{\ell \mid (x, y^{\ell}) \nmid I \}$. 
Since $1 \leq k_{0} \leq r-1$, $M_{k_{0}}$ is integrally closed with $I(M_{k_{0}})=I$ by Theorem \ref{3.6}. 
We consider the following condition: 
\begin{equation}\label{**}
I_1(M_{k_0})=(x, y^{k_0}) \ \text{and} \ xy^{k_0} \notin I. 
\end{equation}
If the condition (\ref{**}) is satisfied, and $M_{k_0}$ is decomposable, then 
$(x, y^{k_0}) \mid I$ by Observation \ref{4.1}. This is a contradiction. 
Thus, we have the following:}

\medskip

The condition $(\ref{**})$ implies that $M_{k_0}$ is indecomposable. 

\medskip

\noindent
{\rm 
Moreover, by elementary calculations as we will see in Proposition \ref{4.4}, one can see that }

\medskip

the ideal $I$ which does not satisfy the condition $(\ref{**})$ is any one of the following cases$:$ 
$$
\begin{cases}
(N_1) & I=(x, y)(x^{\alpha}, y)(x, y^{\beta}) \ \text{where} \ \beta \geq \alpha >0\\
(N_2) & I=(x, y)^3(x, y^2) \\
(N_3) & I=(x, y)^2(x, y^2)(x^2, y) \\
(N_4) & I=(x, y)(x, y^2)\bar{(x^3, y^2)} \\
\end{cases}
$$

\medskip

\noindent
{\rm 
Consequently, when Case I, we may only consider the above $4$ cases. 
}
\end{Observation}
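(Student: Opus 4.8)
The plan is to translate the condition $(\ref{**})$ into explicit numerical data about the exponents of $I$ and then to run a short combinatorial case analysis in which the standing hypotheses $r\geq 3$ and $a_0\leq b_r$ do most of the work. First I would record what $(\ref{**})$ means: by Observation \ref{4.1} we have $I_1(M_{k_0})=(x,y^{\ell_0})$ with $\ell_0=\min\{b_{r-1},k_0,b_r-k_0\}$, so $I_1(M_{k_0})=(x,y^{k_0})$ holds precisely when $k_0\leq b_{r-1}$ and $2k_0\leq b_r$. Hence $I$ fails $(\ref{**})$ if and only if one of the following holds: (i) $k_0>b_{r-1}$; (ii) $b_r<2k_0$; (iii) $k_0\leq b_{r-1}$, $2k_0\leq b_r$, and $xy^{k_0}\in I$. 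I would treat these three (overlapping) possibilities one by one, and then check the converse directly on each $(N_j)$.

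For the structural input, note that by the definition of $k_0$ and the Case I hypothesis $k_0\leq r-1$, the simple complete ideals $(x,y),(x,y^2),\dots,(x,y^{k_0-1})$ are all factors of $I$ in the Zariski decomposition, so $I=P_{k_0-1}\,\fkc$ with $P_{k_0-1}:=(x,y)(x,y^2)\cdots(x,y^{k_0-1})$ and $\fkc$ an $\fkm$-primary complete monomial ideal satisfying $(x,y^{k_0})\nmid\fkc$ and $\ord(\fkc)=r-k_0+1\geq 2$. Since ${\rm NP}(P_{k_0-1})$ has vertices $(k_0-1,0),(k_0-2,1),(k_0-3,3),\dots,(1,\binom{k_0-1}{2}),(0,\binom{k_0}{2})$, the Minkowski-sum description ${\rm NP}(I)={\rm NP}(P_{k_0-1})+{\rm NP}(\fkc)$ lets me read off $b_r=\binom{k_0}{2}+b'$ and $a_0=(k_0-1)+a'$ (where $(a',0)$ and $(0,b')$ are the extreme vertices of ${\rm NP}(\fkc)$), together with a formula for $b_{r-1}$ and a criterion for $xy^{k_0}\in I$ in terms of ${\rm NP}(\fkc)$; I also use $a',b'\geq\ord(\fkc)$ and that $a_0\leq b_r$ reads $(k_0-1)+a'\leq\binom{k_0}{2}+b'$.

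The case analysis then proceeds by plugging $b'\geq\ord(\fkc)=r-k_0+1$ and $r\geq k_0+1$ into (i), (ii), (iii) to bound $k_0$ to a tiny range. I expect (i) to be vacuous; (ii) to force $k_0\in\{2,3\}$, hence $r\in\{3,4\}$, $\ord(\fkc)=2$, $b'=2$, and $a'$ in a short list; and (iii) to force $k_0=2$, $r=3$, $\ord(\fkc)=2$. In each surviving case $\fkc$ is a complete monomial ideal of order $2$ with prescribed extreme vertices: in case (ii) it lies in the finite list $\fkm^2$, $(x^2,y)(x,y)$, $\bar{(x^3,y^2)}$, while in case (iii) it is a product $(x^\alpha,y)(x,y^\beta)$ of two simple order-$1$ ideals with $\alpha\leq\beta$ (forced by $a_0\leq b_r$). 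Substituting back into $I=P_{k_0-1}\fkc$ then yields exactly $(N_1)$ (with $I=\fkm^3$ appearing as the case $\alpha=\beta=1$), $(N_2)$, $(N_3)$, $(N_4)$. Finally I would run the converse check: each $(N_j)$ satisfies $(\ref{*})$, lies in Case I (that is, $1\leq k_0\leq r-1$), and fails $(\ref{**})$ --- $(N_1)$ via $xy^{k_0}\in I$ (or via $b_r<2k_0$ in the degenerate case $\fkm^3$), and $(N_2),(N_3),(N_4)$ via $b_r<2k_0$.

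The main obstacle is the bookkeeping in the last two steps: reading off $b_{r-1}$, and deciding whether $xy^{k_0}\in I$, from the Minkowski sum requires care exactly when an edge of ${\rm NP}(\fkc)$ shares its (integral) slope with one of the edges $(-1,\ell)$ of $P_{k_0-1}$, since then two a-priori distinct vertices of ${\rm NP}(I)$ coalesce and the numerical relations shift; the enumeration of admissible $\fkc$ has to be organized so that none is overlooked and none is wrongly admitted. The remaining ingredients --- the classification of order-$2$ complete monomial ideals with prescribed extreme vertices, and the converse computations --- are routine.
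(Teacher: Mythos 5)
Your proposal is correct and takes essentially the same route as the paper: the indecomposability claim is the direct contradiction via Observation \ref{4.1} already contained in the statement, and your classification of the exceptional ideals is precisely the content of Proposition \ref{4.4}, which likewise uses the factorization $I=(x,y)(x,y^2)\cdots(x,y^{k_0-1})\fkb$ to bound $b_r$ from below, reduces to $(r,k_0)\in\{(3,2),(4,3)\}$, and then enumerates the order-two tails subject to $a_0\leq b_r$. Your Newton-polyhedron/Minkowski-sum bookkeeping is a repackaging of the paper's direct generator computations and leads to the same list $(N_1)$--$(N_4)$.
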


\begin{Proposition}\label{4.4}
Suppose that the Zariski decomposition of $I$ is of the form
$$I=(x, y)(x, y^2) \cdots (x, y^{k-1})\fkb$$
for some $1 \leq k \leq r-1$. Consider the following condition$:$  
\begin{align*}
(F_k): \ \ I_1(M_k)=(x, y^k) \ \text{and}\ xy^k \notin I. 
\end{align*}
Then we have the following. 
\begin{enumerate}
\item If either $k \leq r-2$ or $r \geq 5$, then $(F_k)$ is satisfied. 
\item When $r=3$ and $k=2$. Then $(F_2)$ is satisfied except for the case $(N_1)$.
\item When $r=4$ and $k=3$. Then $(F_3)$ is satisfied except for the cases $(N_2), (N_3), (N_4)$.
\end{enumerate}
\end{Proposition}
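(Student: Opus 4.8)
The plan is to compute $I_1(M_k)$ and check $xy^k\in I$ explicitly from the combinatorial data $(a_i,b_i)$, using the hypothesis that the Zariski decomposition begins with $(x,y)(x,y^2)\cdots(x,y^{k-1})$. First I would translate this hypothesis into constraints on the Newton polyhedron: the first $k-1$ vertices of $\mathrm{NP}(I)$ are forced to be $(p_i,q_i)$ with slopes producing exactly the factors $(x,y^j)$ for $j=1,\dots,k-1$, which in terms of the generators $x^{a_i}y^{b_i}$ means the top of the staircase near the $y$-axis has the shape dictated by those factors; in particular $a_{r-1}=1$ (already recorded in Observation~\ref{4.1}), $b_{r-1}$ is determined, and the ``one step to the left, $j$ steps up'' pattern pins down $a_i,b_i$ for the indices near $i=r-1$. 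From Observation~\ref{4.1} we have $I_1(M_k)=(x,y^\ell)$ with $\ell=\min\{b_{r-1},k,b_r-k\}$, so proving $I_1(M_k)=(x,y^k)$ amounts to showing $b_{r-1}\geq k$ and $b_r-k\geq k$, i.e. $b_r\geq 2k$; and $xy^k\notin I$ must be read off the monomial generators.

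Next I would dispatch part (1). The inequality $b_{r-1}\geq k$: since the decomposition starts with factors $(x,y),(x,y^2),\dots,(x,y^{k-1})$, each contributing a vertical rise, the second-lowest vertex $q_{t-1}$ (equivalently the exponent $b_{r-1}$) is at least $1+2+\cdots+(k-1)$ when $k-1$ genuinely contiguous factors of that form appear — in any case $b_{r-1}\geq k-1$, and a short argument (using $\mathrm{ord}(I)=r\geq k+1$, hence $b_r\geq r\geq k+1$ together with the staircase being strictly increasing) upgrades this to $b_{r-1}\geq k$. For $b_r\geq 2k$: here the two sub-cases $k\leq r-2$ and $r\geq 5$ enter. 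If $k\leq r-2$ then $b_r\geq r\geq k+2$; but we need $2k$, so one invokes that $\fkb$ is a nonzero complete ideal contributing further rise, giving $b_r=\bigl(1+\cdots+(k-1)\bigr)+(\text{rise from }\fkb)\geq\binom{k}{2}+\mathrm{something}$, which exceeds $2k$ once $k$ is not too small — precisely, $\binom{k}{2}\geq k$ for $k\geq 3$, and the cases $k\leq 2$ are covered by $k\leq r-2$ with $r\geq 3$ (so $\mathrm{ord}(I)\geq 3$) forcing enough extra height. If instead $r\geq 5$ but $k=r-1$, then $\mathrm{ord}(I)=r\geq 5$ forces $b_r\geq r\geq 5$ and the staircase $b_1<\cdots<b_{r-1}<b_r$ with $r-1\geq 4$ steps gives $b_{r-1}\geq r-1=k$ and $b_r\geq\binom{k}{1}+\cdots$; I would finish by a direct numerical check that $\binom{r-1}{2}$-type lower bounds on $b_r$ beat $2(r-1)$ for $r\geq 5$. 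Finally $xy^k\notin I$: since $a_{r-1}=1$, membership $xy^k\in I$ would require $b_k\leq $ the $b$-value at $a=1$, i.e. $k\geq b_{r-1}$; but we just showed $b_{r-1}\geq k$, so $xy^k\in I$ only if $b_{r-1}=k$, which the strict inequality $b_{r-1}<b_r$ together with $b_r\geq 2k>k$ does not by itself exclude — so I must argue separately that the shape forces $b_{r-1}>k$ (hence $xy^k$ lies strictly below the staircase at $x$-exponent $1$) in all the cases of (1), leaving $b_{r-1}=k$ only for the small exceptional ideals.

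For parts (2) and (3) I would simply enumerate. When $r=3$, $k=2$, the hypothesis is $I=(x,y)\fkb$ with $\mathrm{ord}(\fkb)=2$ and $\fkb$ complete monomial; running over the possible shapes of $\fkb$ of order $2$ and multiplying by $(x,y)$, I check $(F_2)$ directly and find the only failure is $\fkb=(x^\alpha,y)(x,y^\beta)$, i.e. $I=(x,y)(x^\alpha,y)(x,y^\beta)$, which is exactly $(N_1)$. When $r=4$, $k=3$, the hypothesis is $I=(x,y)(x,y^2)\fkb$ with the order bookkeeping forcing $\mathrm{ord}(\fkb)$ small; enumerating the complete monomial $\fkb$ compatible with $r=4$ and testing $(F_3)$ leaves precisely $(N_2),(N_3),(N_4)$ as failures — $(N_2)$ and $(N_3)$ being the cases where $\fkb$ again splits off a further $(x,y)$- or $(x^2,y)$-type factor so that $xy^3\in I$, and $(N_4)$ the case $\fkb=\bar{(x^3,y^2)}$ where $b_{r-1}$ drops to $2<3=k$.

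The main obstacle I anticipate is the bookkeeping in the inequality $b_r\geq 2k$ (equivalently the condition $b_r-k\geq k$) in part~(1): one has to extract a sharp enough lower bound on $b_r$ purely from ``$\mathrm{ord}(I)=r$, the first $k-1$ Zariski factors are $(x,y),\dots,(x,y^{k-1})$, and $\fkb\neq R$'' — the crude bound $b_r\geq r$ is not enough when $k$ is close to $r$, so one really needs the cumulative vertical rise $\binom{k}{2}$ from the prescribed factors plus at least one more unit from $\fkb$, and then verify the resulting bound $\binom{k}{2}+\varepsilon\geq 2k$ exactly at the small values of $k$ and $r$ where the two case-hypotheses $k\leq r-2$ and $r\geq 5$ do and do not apply. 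That, plus making sure the degenerate borderline $b_{r-1}=k$ is ruled out except in the four listed exceptional ideals, is where the genuine (if still elementary) work lies.
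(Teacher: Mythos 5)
Your plan follows the paper's proof essentially step for step: both reduce $(F_k)$ to the inequalities $b_{r-1}>k$ and $b_r-k\geq k$ via the formula $I_1(M_k)=(x,y^{\min\{b_{r-1},k,b_r-k\}})$ from Observation \ref{4.1}, both extract the lower bound $b_r\geq \frac{(k-1)(k-2)}{2}+r$ (and, for $k=r-1$, $r\geq 5$, the analogous bound on $b_{r-1}$) from the prescribed Zariski factors, and both settle (2) and (3) by enumerating the order-two factor $\fkb$. The ``obstacles'' you flag at the end --- the sharp bound on $b_r$ when $k$ is close to $r$ and ruling out the borderline $b_{r-1}=k$ --- are precisely the computations the paper carries out (for $k\leq r-2$ the non-membership $xy^k\notin I$ follows even more simply from $\ord(I)=r$), and they go through as you anticipate.
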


\begin{proof}
Let $I=(x, y)(x,y^{2}) \cdots (x, y^{k-1})\fkb$. 
The ideal $\fkb$ is a factor in the Zariski decomposition of $I$ so that  
it is an integrally closed monomial ideal with respect to $x, y$ of $\ord(\fkb)=r-(k-1)$. 
Thus,
$$I \subset (x, y^{1+2+\dots +(k-1)+r-(k-1)})=(x, y^{\frac{(k-1)(k-2)}{2}+r}). $$
Since $y^{b_r} \in I$, we have that 
$$b_r \geq \frac{(k-1)(k-2)}{2}+r. $$

We first show the assertion (1). 
Suppose that either $k \leq r-2$ or $r \geq 5$. Then it is easy to see that $b_r-k \geq k$ and hence 
$I_1(M_k)=(x, y^k)$. 
When $k \leq r-2$. The assertion $xy^k \notin I$ is clear 
because $\ord(I)=r$. When $r \geq 5$ and $k=r-1$. 
Then 
$$I \subset (x^2, xy^{1+2+\dots +(r-3)+2}, y^{1+2+\dots +(r-2)+2}). $$
Since $xy^{b_{r-1}} \in I$ and $r \geq 5$, it follows that 
$$b_{r-1} \geq 1+2+\dots +(r-3)+2=\frac{(r-2)(r-3)}{2}+2>r-1. $$
This implies that $xy^{r-1} \notin I$. We have the assertion (1). 

We next show the assertion (2). 
Suppose that $r=3$ and $k=2$. Then $I=(x,y)\fkb$ and $\ord(\fkb)=2$. Thus, we can write 
$$\fkb=(x^a, x^{a'}y^{b'}, y^b)$$ 
where $a>a'>0$, $b>b'>0$ and $a \leq b$. 
If $a'=b'=1$, then $xy^2 \in I$, and, hence, $(F_2)$ is not satisfied. 
When this is the case, 
$$I=(x,y)(x^{a-1}, y)(x,y^{b-1}) $$
which is the ideal in case $(N_{1})$. 
Suppose that $(a', b') \neq (1, 1)$. Then $a=2$ and $a'=1$ because $\ord(\fkb)=2$. Thus,
$$I=(x,y)(x^2, xy^{b'}, y^b)=(x^3, x^2y, xy^{b'+1}, y^{b+1}). $$
Note that $b>b' \geq 2$. Thus, $xy^2 \notin I$. Since $b_{3}-2=(b+1)-2=b-1 \geq 2$, $I_1(M_2)=(x, y^2)$. 
It follows that 
$(F_2)$ is satisfied when $(a', b') \neq (1, 1)$. We have the assertion (2). 

Finally, we show the assertion (3). Suppose that $r=4$ and $k=3$. Then $I=(x,y)(x,y^{2})\fkb$ and 
$\ord(\fkb)=2$. Thus, we can write 
$$\fkb=(x^a, x^{a'}y^{b'}, y^b)$$ 
where $a>a'>0$, $b>b'>0$ and $a+2 \leq b+3$. Thus, $2 \leq a \leq b+1$. 
If $b=2$, then $xy^3 \in I$, and, hence, $(F_3)$ is not satisfied. 
When this is the case, $a=2$ or $a=3$. Thus, 
$$
\fkb=\begin{cases}
(x^{2}, xy, y^{2})=(x, y)^2,  & or \\
(x^{3}, xy, y^{2})=(x^2, y)(x,y),  & or \\
(x^{3}, x^{2}y, y^{2})=\bar{(x^3, y^2)}.  & 
\end{cases}
$$
These are cases in $(N_2), (N_3), (N_4)$. 
Suppose that $b \geq 3$. Then the assertion $xy^3 \notin I$ is clear. Since $b_4-3=(b+3)-3=b \geq 3$, $I_1(M_3)=(x, y^3)$. 
Thus, $(F_3)$ is satisfied when $b \geq 3$. We have the assertion (3). \end{proof}

The ideal in cases $(N_1), (N_2), (N_3)$ can be regarded as a special case of the form
$$I=(x,y)^{r-2}(x^{\alpha}, y)(x,y^{\beta})$$
where $r \geq 3$ and $\alpha, \beta \geq 1$. In this case, one can see that 
the associated module $M_{r-2}$ is indecomposable. 

\begin{Proposition}\label{4.5}
Let $I=(x,y)^{r-2}(x^{\alpha}, y)(x,y^{\beta})$ where $r \geq 3$ and $\alpha, \beta \geq 1$. Then 
$$M_{r-2}=
\left\langle
\begin{pmatrix}
x^{\alpha+r-2} & x^{r-2}y & \dots & x^{r-1-i}y^i &  \cdots &  y^{r-1} & y^{r-2} & 0 \\
0 & 0 & \cdots & 0& \cdots & 0 & x & y^{\beta+1} 
\end{pmatrix}
\right\rangle $$ 
is indecomposable integrally closed with $I(M_{r-2})=I$. 
\end{Proposition}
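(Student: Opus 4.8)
The plan is to get integral closedness and $I(M_{r-2})=I$ directly from Theorem \ref{3.6}, and to prove indecomposability through the isomorphism invariant $\ell_R(M^{**}/M)$. First I would record the minimal generators of $I$: since $\alpha,\beta\ge1$ one has $(x^{\alpha},y)(x,y^{\beta})=(x^{\alpha+1},xy,y^{\beta+1})$, and expanding $I=(x,y)^{r-2}(x^{\alpha+1},xy,y^{\beta+1})$ gives $I=(x^{\alpha+r-1},x^{r-1}y,x^{r-2}y^{2},\dots,xy^{r-1},y^{\beta+r-1})$; thus in the notation of \eqref{mono} we have $a_0=\alpha+r-1$, $a_i=r-i$ and $b_i=i$ for $1\le i\le r-1$, and $b_r=\beta+r-1$, so in particular $a_{r-1}=1$ (and $b_r-(r-2)=\beta+1$), which is exactly what makes the matrix displayed in the statement equal to $M_{r-2}(I)$ of Definition \ref{3.2}. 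Being a product of complete ideals, $I$ is complete, and $1\le r-2\le r-1$ since $r\ge3$; hence Theorem \ref{3.6} gives that $M_{r-2}$ is integrally closed with $I(M_{r-2})=I$, and only indecomposability remains.

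Suppose $M_{r-2}\cong J_1\oplus J_2$ for $\fkm$-primary ideals $J_1,J_2$. As $a_{r-1}=1$, the argument of Observation \ref{4.1} applies: $J_1,J_2$ are complete monomial ideals, $J_1J_2=I$, and $J_1+J_2=I_1(M_{r-2})=(x,y^{\ell})$ with $\ell=\min\{b_{r-1},r-2,b_r-(r-2)\}=\min\{r-2,\beta+1\}$. Since $\ell\le r-2$, the monomial $xy^{\ell}$ has degree $\le r-1<r=\ord(I)$, so $xy^{\ell}\notin I$, and Observation \ref{4.1}(2) forces (after relabelling) $J_1=(x,y^{\ell})$; hence the simple complete ideal $(x,y^{\ell})$ must occur among the Zariski factors $(x,y),(x^{\alpha},y),(x,y^{\beta})$ of $I$. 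Comparing, and using $\ell=\min\{r-2,\beta+1\}$, the only possibilities are $r=3$ (so $\ell=1$, $J_2=(x^{\alpha},y)(x,y^{\beta})$) and $\beta=r-2$ (so $\ell=r-2$, $J_2=(x,y)^{r-2}(x^{\alpha},y)$); if neither holds we already have a contradiction.

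Now I would compare colengths. On one hand, $M_{r-2}$ contains $L:=(I:_Rx)\,e_1\oplus(x^2,xy,y^{\beta+1})\,e_2$, where $F=Re_1\oplus Re_2$: the first $r$ columns of $M_{r-2}$ span $(I:_Rx)\,e_1=(x^{a_0-1},x^{a_1-1}y^{b_1},\dots,x^{a_{r-1}-1}y^{b_{r-1}})\,e_1$, while $x\binom{y^{r-2}}{x}-\binom{xy^{r-2}}{0}=\binom{0}{x^2}$, $y\binom{y^{r-2}}{x}-\binom{y^{r-1}}{0}=\binom{0}{xy}$, and $\binom{0}{y^{\beta+1}}$ all lie in $M_{r-2}$. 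The quotient $M_{r-2}/L$ is cyclic on the class of $\binom{y^{r-2}}{x}$, whose annihilator is $\bigl((I:_Rx):_R y^{r-2}\bigr)\cap\bigl((x^2,xy,y^{\beta+1}):_R x\bigr)=\fkm$, so $\ell_R(M_{r-2}/L)=1$. Since $(I:_Rx)/I$ is spanned over $R/\fkm$ by the classes of the $b_r$ monomials $x^{a_i-1}y^{q}$ with $0\le i\le r-1$ and $b_i\le q<b_{i+1}$, and $\ell_R(R/(x^2,xy,y^{\beta+1}))=\beta+2$, with $b_r=\beta+r-1$ we obtain
\[
\ell_R(F/M_{r-2})=\ell_R\bigl(R/(I:_Rx)\bigr)+\ell_R\bigl(R/(x^2,xy,y^{\beta+1})\bigr)-1=\ell_R(R/I)-(r-2).
\]
On the other hand, if $M_{r-2}\cong J_1\oplus J_2$ then $\ell_R(F/M_{r-2})=\ell_R(R/J_1)+\ell_R(R/J_2)$, and a direct monomial count in the two cases above — using $\ell_R(R/(x,y)N)=\ell_R(R/N)+\mu_R(N)$ together with $I=(x,y)J_2$ when $r=3$, and the explicit generators of $J_2$ when $\beta=r-2$ — gives $\ell_R(R/J_1)+\ell_R(R/J_2)=\ell_R(R/I)-(r-1)$ in both cases. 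Since $\ell_R(R/I)-(r-2)\neq\ell_R(R/I)-(r-1)$, this is a contradiction, so $M_{r-2}$ is indecomposable.

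I expect the main obstacle to be the colength identity $\ell_R(F/M_{r-2})=\ell_R(R/I)-(r-2)$ — exhibiting the submodule $L$, checking $M_{r-2}/L\cong R/\fkm$, and carrying out the two monomial counts — together with the matching computation $\ell_R(R/J_1)+\ell_R(R/J_2)=\ell_R(R/I)-(r-1)$ in the two residual cases; the reduction through Observation \ref{4.1} and the identification of the two cases are routine.
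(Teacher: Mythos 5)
Your proposal is correct, and its skeleton is the same as the paper's: integral closedness and $I(M_{r-2})=I$ from Theorem \ref{3.6}, reduction via Observation \ref{4.1} to the two residual cases $r=3$ (giving $M_{r-2}\cong(x,y)\oplus(x^{\alpha},y)(x,y^{\beta})$) and $\beta=r-2$ (giving $M_{r-2}\cong(x,y^{r-2})\oplus(x,y)^{r-2}(x^{\alpha},y)$), followed by a length comparison. Where you genuinely diverge is in how the length comparison is executed. The paper never computes $\ell_R(F/M_{r-2})$ exactly: it reads off the value forced by the hypothetical decomposition, exhibits a surjection $\eta\colon F/M_{r-2}\to R/(x^{\alpha+r-2},x^{r-2}y,\dots,x^2y^{r-3},y^{r-2})\oplus R/(x,y^{\beta+1})$ whose target has that same length, concludes $\eta$ is an isomorphism, and then contradicts $I(M_{r-2})=I$ by computing the zeroth Fitting ideal of the target (whose order is $r-1\neq r$). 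You instead compute $\ell_R(F/M_{r-2})$ exactly via the filtration $L\subset M_{r-2}\subset F$ with $M_{r-2}/L\cong R/\fkm$, obtaining the uniform identity $\ell_R(F/M_{r-2})=\ell_R(R/I)-(r-2)$, and contradict the value $\ell_R(R/J_1)+\ell_R(R/J_2)=\ell_R(R/I)-(r-1)$ coming from the decomposition. I checked your computations ($I:_Rx$ is indeed generated by the $e_1$-components of the first $r$ columns, $\ell_R((I:_Rx)/I)=b_r=\beta+r-1$, the annihilator of the class of $y^{r-2}e_1+xe_2$ is $\fkm$, and the product-length identities in both residual cases) and they are all correct. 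Your route buys an intrinsic invariant of $M_{r-2}$ computed independently of any decomposition hypothesis, at the cost of having to verify the two product-length identities; the paper's route avoids those identities but needs the surjection-plus-Fitting-ideal step. One cosmetic point: $(I:_Rx)/I$ is not killed by $\fkm$, so ``spanned over $R/\fkm$'' is loose wording -- what you are using is the standard fact that the length of a quotient of monomial ideals equals the number of monomials in the difference, which is what your count of the $b_r$ monomials $x^{a_i-1}y^{q}$, $b_i\le q<b_{i+1}$, delivers.
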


\begin{proof}
Since $I$ is integrally closed, 
$M_{r-2}$ is integrally closed with $I(M_{r-2})=I$ by Theorem \ref{3.6}. 
We show the indecomposability. Note that 
\begin{align*}
I_1(M_{r-2}) &= \begin{cases}
(x, y^{\beta+1}) & (\beta<r-3) \\
(x, y^{r-2}) & (\beta \geq r-3). \\
\end{cases}
\end{align*}

When $\beta< r-3$. It is clear that $xy^{\beta+1} \notin I$. Assume that $M_{r-2}$ is decomposable. Then 
$(x, y^{\beta+1}) \mid I$ by Observation \ref{4.1}. This is a contradiction. 
Thus, $M_{r-2}$ is indecomposable when $\beta < r-3$. 

When $\beta \geq r-3$. It is clear that $xy^{r-2} \notin I$. Assume that $M_{r-2}$ is decomposable. Then 
$(x, y^{r-2}) \mid I$ by Observation \ref{4.1}. Moreover, Observation \ref{4.1} tell us that 
\begin{equation*}
M_{r-2} \cong \begin{cases}
(x, y) \oplus (x^{\alpha},y)(x,y^{\beta}) & (r=3) \\
(x,y^{r-2}) \oplus (x,y)^{r-2}(x^{\alpha},y) & (r \geq 4)
\end{cases}
\end{equation*}
Here we note that $\beta =r-2$ when $r \geq 4$. Thus, 
\begin{equation*}
\ell_R(F/M_{r-2})=\begin{cases}
\alpha+\beta+2 & (r=3) \\
\frac{(r-2)(r+3)}{2}+\alpha & (r \geq 4)
\end{cases}
\end{equation*}
On the other hand, since
\begin{equation*}
M_{r-2} \subset 
\left\langle
\begin{pmatrix}
x^{\alpha+r-2} & x^{r-2}y & \cdots & x^{r-1-i}y^i & \cdots & x^2y^{r-3} & y^{r-2} & 0 & 0 \\
0 & 0 & \cdots & 0 & \cdots & 0 & 0 & x & y^{\beta+1} 
\end{pmatrix}
\right\rangle,
\end{equation*}
we have a surjective $R$-linear map 
\begin{equation*}
\eta: F/M_{r-2} \to R/(x^{\alpha+r-2}, x^{r-2}y, \dots , x^2y^{r-3}, y^{r-2}) \oplus R/(x, y^{\beta+1}). 
\end{equation*}
This implies that 
\begin{equation*}
\ell_R(F/M_{r-2}) \geq \begin{cases}
\alpha+\beta+2 & (r=3) \\
\frac{(r-2)(r+3)}{2}+\alpha & (r \geq 4)
\end{cases}
\end{equation*}
Hence, $\eta$ is an isomorphism. Considering the first Fitting ideal, we have equalities
\begin{equation*}
I=I(M_{r-2})=(x^{\alpha+r-2}, x^{r-2}y, \dots , x^2y^{r-3}, y^{r-2})(x, y^{\beta+1})
\end{equation*}
which is a contradiction. This proves that $M_{r-2}$ is indecomposable. \end{proof}

The remaining case in Case I is the ideal of type $(N_4)$. 

\begin{Example}\label{4.6}
Let $I=(x,y)(x,y^2)\bar{(x^3,y^2)}$. Then 
\begin{equation*}
M_2=\left\langle 
\begin{pmatrix}
x^4 & x^3y & xy^2 & y^3 & y^2 & 0 \\
0 & 0& 0 & 0 & x & y^3
\end{pmatrix}
\right\rangle
\end{equation*}
is indecomposable integrally closed with $I(M_2)=I$. 
\end{Example}

\begin{proof}
By Theorem \ref{3.6}, $M_2$ is integrally closed with $I(M_2)=I$. We need to show the indecomposability. 
It is clear that $I_1(M_2)=(x, y^2)$ and $xy^2 \notin I$. If $M_2$ is decomposable, then 
\begin{equation*}
M_2 \cong (x,y^2) \oplus (x,y)\bar{(x^3, y^2)}
\end{equation*}
by Observation \ref{4.1}. 
Hence, $\ell_R(F/M_2)=2+8=10$. On the other hand, since 
\begin{equation*}
M_2 \subset 
\left\langle
\begin{pmatrix}
x^4 &  x^3y & y^2 & 0 & 0  \\
0 & 0 & 0 & x & y^3 
\end{pmatrix}
\right\rangle, 
\end{equation*}
we have a surjective $R$-linear map 
\begin{equation*}
\eta: F/M_2 \to R/(x^4, x^3y,y^2) \oplus R/(x,y^3). 
\end{equation*}
Thus, $\ell_R(F/M_2) \geq 7+3=10$, and, hence, $\eta$ is an isomorphism. This implies equalities
\begin{equation*}
I=I(M_2)=(x^4, x^3y,y^2)(x,y^3)
\end{equation*}
which is a contradiction. This shows that $M_2$ is indecomposable. \end{proof}

As a consequence, we get the following result in Case I. 

\begin{Theorem}\label{4.7}
Suppose that $(x,y^{\ell}) \nmid I$ for some $1 \leq \ell \leq r-1$. 
Then there exists an integer $1 \leq k \leq r-1$ such that 
$M_k$ is indecomposable integrally closed with $I(M_k)=I$. 
\end{Theorem}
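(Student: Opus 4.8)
The plan is to funnel everything through the single candidate module $M_{k_0}$, where $k_0:=\min\{\ell\mid (x,y^{\ell})\nmid I\}$, and to resort to Proposition \ref{4.5} and Example \ref{4.6} only for the handful of degenerate ideals. First I would note that the hypothesis of the theorem says precisely that the set $\{\ell\mid 1\le\ell\le r-1,\ (x,y^{\ell})\nmid I\}$ is nonempty, so $1\le k_0\le r-1$; hence Theorem \ref{3.6} applies and $M_{k_0}$ is integrally closed with $I(M_{k_0})=I$. Next, by the very definition of $k_0$, each of the pairwise distinct simple complete ideals $(x,y),(x,y^2),\dots,(x,y^{k_0-1})$ divides $I$, so by Zariski's unique factorization theorem their product divides $I$; thus $I=(x,y)(x,y^2)\cdots(x,y^{k_0-1})\fkb$ for some ($\fkm$-primary, integrally closed) monomial ideal $\fkb$. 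This is exactly the form in which Proposition \ref{4.4} may be invoked, with $k=k_0$.

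The heart of the argument is then the dichotomy supplied by Proposition \ref{4.4}. If $k_0\le r-2$, or $r\ge 5$, or ($r\in\{3,4\}$ with $k_0=r-1$ and $I$ not among the listed ideals $(N_1),\dots,(N_4)$), then condition $(\ref{**})$ holds for $M_{k_0}$, i.e.\ $I_1(M_{k_0})=(x,y^{k_0})$ and $xy^{k_0}\notin I$. In this case I would conclude indecomposability exactly as in Observation \ref{4.3}: a splitting $M_{k_0}\cong J_1\oplus J_2$ would, via Observation \ref{4.1}, force $(x,y^{k_0})\mid I$, contradicting the minimality of $k_0$. So $M_{k_0}$ is the desired module and we take $k=k_0$.

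It remains to treat the exceptional ideals $(N_1),(N_2),(N_3),(N_4)$, which arise only when $r-1=k_0$ and $r\in\{3,4\}$. For $(N_1),(N_2),(N_3)$ I would use the observation recorded after Proposition \ref{4.4} that each such $I$ is of the form $(x,y)^{r-2}(x^{\alpha},y)(x,y^{\beta})$ with $\alpha,\beta\ge 1$; Proposition \ref{4.5} then yields that $M_{r-2}$ is indecomposable integrally closed with $I(M_{r-2})=I$, and $1\le r-2\le r-1$, so $k=r-2$ is admissible. For $(N_4)$, namely $I=(x,y)(x,y^2)\overline{(x^3,y^2)}$ (for which $r=4$), Example \ref{4.6} gives directly that $M_2$ is indecomposable integrally closed with $I(M_2)=I$. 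Assembling the three sub-cases proves the statement.

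Given that Propositions \ref{4.4}, \ref{4.5} and Example \ref{4.6} are already in hand, the proof of Theorem \ref{4.7} is essentially bookkeeping rather than new work; the genuine difficulty has been pushed into those earlier results (in particular, the length/Fitting-ideal contradictions that drive Proposition \ref{4.5} and Example \ref{4.6}). The one point I would be most careful about is the exhaustiveness of the case split: that the choice $k=k_0$ really lands in the hypothesis range of Proposition \ref{4.4}, and that the list $(N_1),\dots,(N_4)$ of ideals failing $(\ref{**})$ is complete and that each of them is matched with a module that has genuinely been shown to be indecomposable, rather than merely one for which $(\ref{**})$ breaks down.
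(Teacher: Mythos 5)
Your proposal is correct and follows essentially the same route as the paper: the paper states Theorem \ref{4.7} as a direct consequence of Observation \ref{4.3} (take $k_0=\min\{\ell\mid (x,y^{\ell})\nmid I\}$ and use condition $(\ref{**})$ together with Observation \ref{4.1}), with the exceptional ideals $(N_1),\dots,(N_4)$ identified by Proposition \ref{4.4} and disposed of by Proposition \ref{4.5} and Example \ref{4.6}, exactly as you assemble them. Your added care about why the product $(x,y)\cdots(x,y^{k_0-1})$ divides $I$ (via Zariski's unique factorization) and about the exhaustiveness of the case split is sound and matches the paper's implicit reasoning.
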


We move to Case II. The ideal $I$ is of the form
\begin{equation*}
I=(x,y)(x,y^2) \cdots (x,y^{r-1})\fkb
\end{equation*}
where $\fkb$ is a simple factor of the monomial ideal $I$ with $\ord(\fkb)=1$. Thus,   
$$\fkb=(x^{\alpha}, y) \ \text{or} \ \fkb=(x,y^{\beta})$$ 
where $\alpha, \beta \geq 1$. We divide Case II into the following two cases: 
\begin{equation*}
\begin{cases}
\text{Case II--1} & \fkb=(x^{\alpha}, y)  \ \text{where} \ \alpha \geq 1 \\
\text{Case II--2} & \fkb=(x, y^{\beta})  \ \text{where} \ \beta \geq 2 
\end{cases}
\end{equation*}

We first consider Case II--1. When $r=3$. The ideal $I=(x,y)(x,y^2)(x^{\alpha},y)$ can be viewed 
as a special case in Proposition \ref{4.5}. 
Thus, $M_1$ is indecomposable in this case. When $r=4$. One can see that $M_3$ is indecomposable
as follows. 

\begin{Example}\label{4.8}
Let $I=(x,y)(x,y^2)(x,y^3)(x^{\alpha},y)$ where $\alpha \geq 1$. Then 
\begin{equation*}
M_3=\left\langle \begin{pmatrix}
x^{\alpha+2} & x^2y & xy^2 & y^4 & y^3 & 0 \\
0 & 0 & 0 & 0 & x & y^4
\end{pmatrix}
\right\rangle
\end{equation*}
is indecomposable integrally closed with $I(M_3)=I$. 
\end{Example}

\begin{proof}
We need to show the indecomposability of $M_3$. 
It is clear that $I_1(M_3)=(x,y^3)$ and $xy^3 \notin I$. 
If $M_3$ is decomposable, then 
$$M_3 \cong (x,y^3) \oplus (x,y)(x,y^2)(x^{\alpha},y) $$ 
by Observation \ref{4.1}. Thus, 
$\ell_R(F/M_3)=3+(\alpha+6)=\alpha+9$. On the other hand, since
\begin{equation*}
M_3 \subset 
\left\langle 
\begin{pmatrix}
x^{\alpha+2} & x^2y & xy^2 & y^3 & 0 & 0 \\
0 & 0 & 0 & 0 & x & y^4 
\end{pmatrix}
\right\rangle, 
\end{equation*}
we have a surjective $R$-linear map 
\begin{equation*}
\eta: F/M_3 \to R/(x^{\alpha+2}, x^2y, xy^2,y^3) \oplus R/(x,y^4). 
\end{equation*}
By comparing length, $\eta$ is an isomorphism. This implies equalities 
\begin{equation*}
I=I(M_3)=(x^{\alpha+2}, x^2y, xy^2,y^3)(x,y^4)
\end{equation*}
which is a contradiction. This shows that $M_3$ is indecomposable. \end{proof}

When $r=5$, one can see that $M_{4}$ is indecomposable in the same manner. However, 
when $r \geq 6$, the same approach as in Example \ref{4.8} does not work, and 
it seems to be difficult to find an indecomposable module $M_k$ in the range 
$1 \leq k \leq r-1$. Therefore, we consider the next module $M_r$.

\begin{Theorem}\label{4.9}
Let $I=(x,y)(x,y^2) \cdots (x,y^{r-1})(x^{\alpha},y)$ where $r \geq 5$ and $\alpha \geq 1$. Then 
$M_r$ is indecomposable integrally closed with $I(M_r)=I$. 
\end{Theorem}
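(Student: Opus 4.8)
The plan is to follow the pattern of Example~\ref{4.8} and Proposition~\ref{4.5}: make $I$ and $M_r$ explicit, check $I(M_r)=I$, prove that $M_r$ is integrally closed (this no longer follows from Theorem~\ref{3.6}, since $k=r$ is outside its range), and then invoke Observation~\ref{4.1} to exclude a decomposition. First I would use Zariski's product theorem (equivalently, the Newton‑polyhedron description recalled in the preamble of this section) to list the minimal monomial generators of $I=(x,y)(x,y^2)\cdots(x,y^{r-1})(x^\alpha,y)$: one obtains $a_0=r-1+\alpha$, $a_i=r-i$ and $b_i=\binom i2+1$ for $1\le i\le r-1$, and $b_r=\binom r2+1$. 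In particular $a_{r-1}=1$, as Observation~\ref{4.1} demands. Since $b_r-r=\binom{r-1}2\ge1$, the module $M_r$ of Definition~\ref{3.2} is defined, and it is the module displayed in the statement. Because the largest gap $b_r-b_{r-1}=r-1$ dominates every gap $b_{i+1}-b_i$ and $r\le b_{r-1}$ (here $r\ge5$), Remark~\ref{3.4}(2) applies and Lemma~\ref{3.3}(2) gives $I(M_r)=I$.

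Next I would prove that $M_r$ is integrally closed. By Proposition~\ref{3.5}, $M_r$ is contracted from $S=R[\tfrac{\fkm}{x+y}]$, so by Proposition~\ref{2.4} it suffices to check that $M_rS_Q$ is integrally closed for every maximal ideal $Q$ of $S$; when $Q\not\supseteq\fkm S$ this is immediate since $S_Q$ is a DVR. For $Q\supseteq\fkm S$ I would run the substitution $x=z(x+y)$, $y=(1-z)(x+y)$ with $z=\tfrac{x}{x+y}$, exactly as in the proof of Theorem~\ref{3.6}. The one new feature is that the content of the top row of $\widetilde{M_rS}$ is $(x+y)^{r-1}$, not $(x+y)^{r}$ — the obstruction being precisely the two columns where $a_i+b_i$ attains the minimum $\ord(I)=r$ — while the bottom row still has content $(x+y)$ because $b_r-r\ge1$. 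Factoring out $\operatorname{diag}\big((x+y)^{r-1},\,x+y\big)$ replaces $M_rS$ by an isomorphic module, and now there are two kinds of $Q\supseteq\fkm S$: if $z\notin Q$, then the column originating from $\binom{y^r}{x}$ has a unit entry, so $M_rS_Q\cong S_Q\oplus I^{S_Q}$ with $I^{S_Q}$ a transform of $I$ (Proposition~\ref{2.5}), hence complete, and both summands are integrally closed; and for the unique $Q$ with $z\in Q$ one checks that the reduced matrix is exactly the defining matrix of $M_1\big(I^{S_Q}\big)$ with respect to the regular system of parameters $(z,\,x+y)$ of $S_Q$, where $I^{S_Q}=(z,x+y)(z,(x+y)^2)\cdots(z,(x+y)^{r-2})S_Q$ is a complete monomial ideal of order $r-2\ge3$, so Theorem~\ref{3.6} applied over $S_Q$ (with $k=1\le\ord(I^{S_Q})-1$) finishes this case. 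Hence $M_r$ is integrally closed with $I(M_r)=I$.

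For indecomposability I would argue as in Observation~\ref{4.1}. The other Fitting ideal is $I_1(M_r)=(x,y^\ell)$ with $\ell=\min\{b_{r-1},\,r,\,b_r-r\}=\min\{\binom{r-1}2+1,\,r,\,\binom{r-1}2\}$, and since $\binom{r-1}2\ge r$ for $r\ge5$ this equals $(x,y^r)$. Moreover $xy^r\notin I$: the generators of $I$ of $x$‑degree at most $1$ are $xy^{b_{r-1}}$ and $y^{b_r}$, and neither divides $xy^r$ because $b_{r-1}=\binom{r-1}2+1>r$ and $b_r>r$ when $r\ge5$. If $M_r$ were decomposable, Observation~\ref{4.1}(2) would force $(x,y^r)$ to be a simple factor in the Zariski decomposition of $I$; but that decomposition is $(x,y)(x,y^2)\cdots(x,y^{r-1})(x^\alpha,y)$, whose simple factors are the ideals $(x,y^j)$ with $1\le j\le r-1$ together with $(x^\alpha,y)$, none of which equals $(x,y^r)$ — contradicting uniqueness of the Zariski factorization. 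Therefore $M_r$ is indecomposable.

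The genuine difficulty is concentrated in the second step. Since $k=r=\ord(I)$ lies outside the scope of Theorem~\ref{3.6}, the top row cannot be divided uniformly by $(x+y)^k$, and the integral‑closedness argument must be redone over the quadratic transforms of $R$; the point at which the explicit numerology pays off is that, after factoring out only $(x+y)^{r-1}$, the module localizes over the single ``bad'' quadratic transform onto a module of the form $M_1(\,\cdot\,)$ for a complete monomial ideal of order $r-2$, so Theorem~\ref{3.6} becomes applicable there. Keeping track of the generators of $I$ and of $I^{S_Q}$ precisely enough to recognize this last identification is the main bookkeeping obstacle.
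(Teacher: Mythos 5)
Your proposal follows essentially the same route as the paper's proof: the same verification of $I(M_r)=I$ via Remark \ref{3.4}(2), the same reduction to the quadratic transforms after factoring out $\operatorname{diag}((x+y)^{r-1},x+y)$, the same identification of the localization at the one point with $z\in Q$ as $M_1$ of the transform $(z,x+y)(z,(x+y)^2)\cdots(z,(x+y)^{r-2})$ so that Theorem \ref{3.6} applies there, and the same use of Observation \ref{4.1} with $I_1(M_r)=(x,y^r)$ and $xy^r\notin I$ for indecomposability. The argument is correct and matches the paper's proof in all essentials.
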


\begin{proof}
Note that $I=(x^{\alpha+r-1})+(x^{r-i}y^{b_i} \mid 1 \leq i \leq r)$ where 
$$b_i=1+1+2+\dots +(i-1)=\frac{i(i-1)}{2}+1. $$ 
Then one can easily see that the ideal $I$ satisfies the condition in Remark \ref{3.4} (2): 
\begin{equation*}
b_{r-1} \geq r \ \text{and} \ b_r-b_{r-1} \geq b_{i+1}-b_i \ \text{for all} \ 0 \leq i \leq r-1. 
\end{equation*}
Thus, $I(M_r)=I$ by Lemma \ref{3.3}. Since $I$ is integrally closed with $I(M_{r})=I$, 
the module $M_r$ is contracted from $S=R[\frac{\fkm}{x+y}]$ 
by Proposition \ref{3.5}. 
To show that $M_r$ is integrally closed, it is enough to show that $M_rS$ is integrally closed 
by Proposition \ref{2.4}. This is equivalent to that $M_rS_Q$ is integrally closed 
for every maximal ideal $Q$ of $S$. 

Let $z:=\frac{x}{x+y} \in S$. Then we can write $x=z(x+y)$ and $y=(1-z)(x+y)$ in $S$. 
As in the proof of Theorem \ref{3.6}, by considering an $S$-linear map $S^{2} \to S^{2}$ represented by 
$\begin{pmatrix}
(x+y)^{r-1} & 0 \\
0 & x+y
\end{pmatrix}
$, we have that 
\begin{equation*}
\tilde{M_rS} \sim 
\begin{pmatrix}
z^{\alpha+r-2}(x+y)^{\alpha-1} & f_{1} & \cdots & f_{i} & \cdots & f_{r-1} & (1-z)^r(x+y) & 0 \\
0 & 0 & \cdots &  0 & \cdots & 0 & z & f
\end{pmatrix}
\end{equation*}
where 
$$
\begin{cases}
f_{i}=z^{r-i-1}(1-z)^{\frac{i(i-1)}{2}+1}(x+y)^{\frac{(i-1)(i-2)}{2}} &  \text{for} \ 
1 \leq i \leq r-1 \\
f= (1-z)^{\frac{(r-1)(r-2)}{2}}(x+y)^{\frac{(r-1)(r-2)}{2}-1}  &
\end{cases}  
$$

Let $Q$ be a maximal ideal of $S$. We show that $M_rS_Q$ is integrally closed. 
When $Q \nsupseteq \fkm S$. Since $S_Q$ is a discrete valuation ring, 
it follows that $M_{r}S_{Q}$ is integrally closed. 
Suppose that $Q \supseteq \fkm S$. When $z \notin Q$. 
Then $z$ is a unit of $S_Q$. By elementary matrix operations over $S_Q$,   
\begin{equation*}
\tilde{M_rS_Q} \sim \begin{pmatrix}
(x+y)^{\alpha-1} & 1-z & 0 \\
0 & 0 & 1
\end{pmatrix}. 
\end{equation*}
This implies that 
$M_rS_Q \cong J \oplus S_Q$ for some $QS_Q$-primary integrally closed ideal $J$ of $S_Q$. 
Therefore, $M_rS_Q$ is integrally closed. 
Suppose that $z \in Q$. Then $1-z \notin Q$ and it is a unit of $S_Q$. 
By elementary matrix operations over $S_Q$,  
\begin{equation}\label{***}
\tilde{M_rS_Q} \sim \begin{pmatrix}
z^{r-3} & g_{3} & \cdots & g_{i} & \cdots & g_{r-1} & x+y & 0 \\
0 & 0 & \cdots & 0 & \cdots & 0 & z & g
\end{pmatrix}
\end{equation}
where 
$$
\begin{cases}
g_{i}= z^{r-i-1}(x+y)^{\frac{(i-1)(i-2)}{2}} & \text{for} \ 3 \leq i \leq r-1 \\
g=(x+y)^{\frac{(r-1)(r-2)}{2}-1}. & 
\end{cases}
$$ 
We consider the ideal in $S_Q$:  
\begin{equation*}
\fkc=(z,x+y)(z, (x+y)^2) \cdots (z, (x+y)^{r-2}). 
\end{equation*}
Note that $z, x+y$ is a regular system of parameters for $S_Q$, and 
$\fkc$ is an integrally closed monomial ideal with respect to $z, x+y$. 
We then claim the following. 

\

\noindent
{\bf Claim} $M_rS_Q \cong M_1(\fkc)$

\

Note that $\fkc=(z^{r-2-j}(x+y)^{c_j} \mid 0 \leq j \leq r-2)$, where 
$c_j=1+\dots +j=\frac{(j+1)j}{2}, $
and  
\begin{align*}
\tilde{M_1(\fkc)} & = 
\begin{pmatrix}
z^{r-3} & h_{1} & \cdots & h_{j} & \cdots & h_{r-3} & x+y & 0 \\
0 & 0 & \cdots & 0 & \cdots & 0 & z & h 
\end{pmatrix}
\end{align*}
where 
$$
\begin{cases}
h_{j}=z^{r-2-j-1}(x+y)^{c_j} & \text{for} \ 1 \leq j \leq r-3 \\
h=(x+y)^{c_{r-2}-1}. & 
\end{cases}
$$ 
Thus, it follows that $\tilde{M_1(\fkc)} \sim \tilde{M_rS_Q}$ by (\ref{***}) and hence 
$M_rS_Q \cong M_1(\fkc)$. Since $M_1(\fkc)$ is integrally closed by Theorem \ref{3.6}, 
$M_rS_Q$ is integrally closed. Thus, we have that $M_{r}$ is integrally closed. 

Finally, we show the indecomposability. 
Since $r \geq 5$, it follows that $b_r-r \geq r$ and $b_{r-1} > r$. 
Thus, $I_1(M_r)=(x,y^r)$ and $xy^r \notin I$. If $M_r$ is decomposable, 
then $(x, y^r) \mid I$ by Observation \ref{4.1}. This is a contradiction. 
This shows that $M_r$ is indecomposable. \end{proof}

We next consider Case II--2. 
When $r=3$, one can see that $M_2$ is indecomposable as follows.   

\begin{Example}\label{4.10}
Let $I=(x,y)(x,y^2)(x,y^{\beta})$ where $\beta \geq 2$. Then 
\begin{equation*}
M_2=\left\langle
\begin{pmatrix}
x^2 & xy & y^3 & y^2 & 0 \\
0 & 0 & 0 & x & y^{\beta+1}
\end{pmatrix}
\right\rangle
\end{equation*}
is indecomposable integrally closed with $I(M_2)=I$. 
\end{Example}

\begin{proof}
We need to show the indecomposability of $M_{2}$. 
It is clear that $I_1(M_2)=(x, y^2)$ and $xy^2 \notin I$. 
If $M_2$ is decomposable, then 
$$M_2 \cong (x,y^2) \oplus (x,y)(x,y^{\beta}) $$ 
by Observation \ref{4.1}. 
Hence, $\ell_R(F/M_2)=2+(\beta+2)=\beta+4$. 
On the other hand, since
\begin{equation*}
M_2 \subset 
\left\langle
\begin{pmatrix}
x^2 &  xy & y^2 & 0 & 0 \\
0 & 0 & 0 & x & y^{\beta+1} 
\end{pmatrix}
\right\rangle, 
\end{equation*}
we have a surjective $R$-linear map 
\begin{equation*}
\eta: F/M_2 \to R/(x,y)^2 \oplus R/(x,y^{\beta+1}). 
\end{equation*}
By comparing length, $\eta$ is an isomorphism. This implies equalities 
\begin{equation*}
I=I(M_2)=(x,y)^2(x,y^{\beta+1})
\end{equation*}
which is a contradiction. This shows that $M_2$ is indecomposable. \end{proof}

When $r=4$, one can see that $M_{3}$ is indecomposable in the same manner. However, 
when $r \geq 5$, the same approach as in Example \ref{4.10} 
does not work at least when $\beta \gg 0$, and 
it seems to be difficult to find indecomposable modules $M_k$ in the range $1 \leq k \leq r-1$.  
Therefore, we consider the next modules $M_r$ and $M_{r+1}$. 

\begin{Theorem}\label{4.11}
Let $I=(x,y)(x,y^2) \cdots (x,y^{r-1})(x,y^{\beta})$ where $r \geq 4$ and $\beta \geq 2$. 
Then $M_{r}$ and $M_{r+1}$ are integrally closed with $I(M_r)=I(M_{r+1})=I$. Moreover, 
$M_r$ is indecomposable if $\beta \neq r$, and $M_{r+1}$ is indecomposable if $\beta=r$. 
\end{Theorem}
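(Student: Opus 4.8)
The plan is to mimic the proof of Theorem~\ref{4.9}, treating $M_r$ and $M_{r+1}$ in parallel for the integral-closure part and separating them only for indecomposability. First I would write down $I$ explicitly. Setting $c_\ell=\tfrac{\ell(\ell+1)}{2}$, one has $\prod_{j=1}^{r-1}(x,y^j)=(x^{\,r-1-\ell}y^{c_\ell}\mid 0\le\ell\le r-1)$; multiplying by $(x,y^\beta)$ and discarding redundant monomials gives
\begin{equation*}
I=(x^{\,r-i}y^{b_i}\mid 0\le i\le r),\qquad b_0=0,\quad b_i=\tfrac{i(i-1)}{2}+\min\{i,\beta\}\ (1\le i\le r-1),\quad b_r=\tfrac{r(r-1)}{2}+\beta .
\end{equation*}
In particular $a_i=r-i$, $a_{r-1}=1$, $a_0=r\le b_r$, and $\ord(I)=r$. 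A short check shows the gap sequence $(b_{i+1}-b_i)_i$ is non-decreasing, so $b_r-b_{r-1}$ is its largest term, and $b_{r-1}\ge r+1$ for $r\ge 4$; hence Remark~\ref{3.4}(2) applies for both $k=r$ and $k=r+1$, and Lemma~\ref{3.3}(2) gives $I(M_r)=I(M_{r+1})=I$.

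Since $I$ is complete (a product of complete ideals) and $I(M_k)=I$ for $k\in\{r,r+1\}$, Proposition~\ref{3.5} shows that $M_k$ is contracted from $S=R[\tfrac{\fkm}{x+y}]$, so by Proposition~\ref{2.4} it suffices to prove that $M_kS_Q$ is integrally closed for every maximal ideal $Q$ of $S$. Put $z=\tfrac{x}{x+y}$, so that $x=z(x+y)$ and $y=(1-z)(x+y)$. Normalizing $\tilde{M_kS}$ exactly as in the proof of Theorem~\ref{4.9} by dividing its two rows by suitable powers of $x+y$, I would split into the usual cases. If $Q\nsupseteq\fkm S$ then $S_Q$ is a discrete valuation ring and $M_kS_Q$ is integrally closed. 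If $Q\supseteq\fkm S$ and $z\notin Q$, then $z$ is a unit of $S_Q$ and elementary operations give $M_kS_Q\cong J\oplus S_Q$ with $J=I(M_kS_Q)=I^{S_Q}$, which is complete because $I$ is. If $Q\supseteq\fkm S$ and $z\in Q$, then $1-z$ is a unit of $S_Q$, and by elementary row and column operations over $S_Q$ I would reduce $\tilde{M_kS_Q}$ to a matrix which, after comparing exponents, equals $\tilde{M_j(\fkc)}$ up to $\sim$, for an explicit complete monomial ideal $\fkc=\prod_i(z,(x+y)^{e_i})$ of $S_Q$ with respect to the regular system of parameters $z,x+y$ and an index $j$ with $1\le j\le\ord(\fkc)-1$ (mirroring the identification $M_rS_Q\cong M_1(\fkc)$ in Theorem~\ref{4.9}); then $M_j(\fkc)$ is integrally closed by Theorem~\ref{3.6}, and hence so is $M_kS_Q$. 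This exponent bookkeeping, and especially the verification that the resulting index $j$ lies in the admissible range $1\le j\le\ord(\fkc)-1$, is the main technical obstacle, and is where the hypothesis $r\ge 4$ is used.

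For indecomposability, Observation~\ref{4.1} gives $I_1(M_k)=(x,y^{\ell})$ with $\ell=\min\{b_{r-1},k,b_r-k\}$, and says that if $M_k$ is integrally closed with $I(M_k)=I$ and decomposable, and if $xy^{\ell}\notin I$, then $(x,y^{\ell})\mid I$. For $k=r$ the formulas above give $b_{r-1}>r$ and $b_r-r\ge r$ when $r\ge 4$, so $\ell=r$; moreover $xy^r\notin I$ because the only minimal generators of $I$ with $x$-exponent at most $1$ are $y^{b_r}$ and $xy^{b_{r-1}}$, neither of which divides $xy^r$, while $(x,y^r)\nmid I$ since the simple factors of $I$ are exactly the ideals $(x,y^j)$ with $j$ in the multiset $\{1,\dots,r-1,\beta\}$, and $r$ is not among them when $\beta\ne r$. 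Hence $M_r$ is indecomposable whenever $\beta\ne r$. When $\beta=r$ one computes $b_r=\tfrac{r(r+1)}{2}$, $b_{r-1}=\tfrac{r(r-1)}{2}$, and $b_r-(r+1)=\tfrac{(r-2)(r+1)}{2}$, all at least $r+1$ for $r\ge 4$, so with $k=r+1$ one gets $\ell=r+1$, again $xy^{r+1}\notin I$, and $(x,y^{r+1})\nmid I$ because $r+1\notin\{1,\dots,r-1,\beta\}=\{1,\dots,r\}$; therefore $M_{r+1}$ is indecomposable in this case. This completes the plan.
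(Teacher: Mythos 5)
Your overall strategy is the paper's own, and the second half of your argument (indecomposability) is complete and correct: the same formulas for the $b_i$, the same verification that $\ell=\min\{b_{r-1},k,b_r-k\}$ equals $r$ (resp.\ $r+1$), that $xy^{\ell}\notin I$, and that $(x,y^{\ell})\nmid I$ by unique factorization of complete ideals, followed by Observation~\ref{4.1}. The first half, however, stops exactly at the step that constitutes the actual content of the theorem. You write that you ``would reduce $\tilde{M_kS_Q}$ to $\tilde{M_j(\fkc)}$ for an explicit complete monomial ideal $\fkc$'' and you yourself flag the identification of $\fkc$, the exponent bookkeeping, and the check that $1\le j\le\ord(\fkc)-1$ as ``the main technical obstacle'' --- but you do not carry any of it out. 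That verification \emph{is} the proof of integral closure here; locating the difficulty is not the same as resolving it.

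For the record, the step does go through. The paper works in the chart $S=R[\frac{\fkm}{y}]$ rather than $R[\frac{\fkm}{x+y}]$, justified by $I+(y)=(x^r,y)$ and Proposition~\ref{2.3}; your route via Proposition~\ref{3.5} is equally legitimate, and at the point with $z\in Q$ the extra $(1-z)^{b_i}$ factors appearing in your chart are units, so the resulting exponent pattern is literally the same. Writing $z=x/y$ and $k_0\in\{r,r+1\}$, one finds at $Q=(y,z)$ that $M_{k_0}S_Q\cong M_{k_0-r+1}(\fkc)$ for $\fkc=(z,y)(z,y^2)\cdots(z,y^{r-2})(z,y^{\beta-1})$, the key identities being $b_i-i=c_{i-1}$ and $b_r-r=c_{r-1}$ for the exponents $c_j$ of $\fkc$. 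Since $\ord(\fkc)=r-1$ and $k_0-r+1\in\{1,2\}$, the hypothesis $r\ge 4$ gives $k_0-r+1\le r-2=\ord(\fkc)-1$, so Theorem~\ref{3.6} applies; this is the one place $r\ge 4$ is used in the integral-closure half, exactly as you suspected. With that computation supplied, your proposal becomes the paper's proof.
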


\begin{proof}
Note that $I=(x^{r-i}y^{b_i} \mid 0 \leq i \leq r)$ where 
$$
\begin{cases}
b_i = \begin{cases}
\frac{i(i-1)}{2}+\beta & (\beta < i) \\
\frac{(i+1)i}{2} & (\beta \geq i)
\end{cases} &  \text{for} \ 0 \leq i \leq r-1 \\
b_r=\frac{r(r-1)}{2}+\beta. & 
\end{cases}
$$
Then one can easily see that the ideal $I$ satisfies the condition in Remark \ref{3.4} (2): 
\begin{equation*}
b_{r-1} \geq r+1 \geq r \ \text{and} \ b_r-b_{r-1} \geq b_{i+1}-b_i \ 
\text{for all} \ 0 \leq i \leq r-1.
\end{equation*}
Thus, $I(M_r)=I(M_{r+1})=I$ by Lemma \ref{3.3}. 
One can also easily see that 
$$I+(y)=(x^r, y) \ \text{and} \ \ell_R(R/I+(y))=r=\ord(I). $$ 
It follows that $M_r$ and $M_{r+1}$ are contracted from $S=R[\frac{\fkm}{y}]$ 
by Proposition \ref{2.3}. 

In what follows, let $k_0 \in \{r, r+1\}$. 
To show that $M_{k_0}$ is integrally closed, it is enough to show that $M_{k_0}S_Q$ is integrally closed 
for every maximal ideal $Q$ of $S$ with $Q \supseteq \fkm S$. 

Let $z :=\frac{x}{y} \in S$. Then $x=zy$ in $S$. 
As in the proof of Theorem \ref{3.6}, by considering an $S$-linear map $S^{2} \to S^{2}$ represented by 
$\begin{pmatrix}
y^{r-1} & 0 \\
0 & y
\end{pmatrix}
$, we have that 
\begin{equation}\label{****}
\tilde{M_{k_0}S} \sim 
\begin{pmatrix}
z^{r-2} & z^{r-3}y & f_{3} & \cdots & f_{i} & \cdots & f_{r-1} & y^{k_0-r+1} & 0 \\
0 & 0 & 0 & \cdots & 0 & \cdots & 0 & z & y^{b_r-k_0-1}
\end{pmatrix}
\end{equation}
where 
$$f_{i}=z^{r-i-1}y^{b_i-i} \ \ \text{for} \ 3 \leq i \leq r-1. $$

Let $Q$ be a maximal ideal of $S$ with $Q \supseteq \fkm S$. We show that 
$M_rS_Q$ is integrally closed. 
When $z \notin Q$. Then $z$ is a unit of $S_Q$. By elementary matrix operations over $S_Q$, 
\begin{equation*}
\tilde{M_{k_0}S_Q} \sim \begin{pmatrix}
1 & 0 \\
0 & 1 
\end{pmatrix}. 
\end{equation*}
Thus, $M_{k_0}S_Q \cong S_Q \oplus S_Q$ so that $M_{k_0}S_Q$ is integrally closed. 
Suppose that $z \in Q$. We consider the ideal in $S_Q$:  
\begin{equation*}
\fkc=(z, y)(z, y^2) \cdots (z, y^{r-2})(z,y^{\beta-1}). 
\end{equation*}
Note that $z, y$ is a regular system of parameters for $S_{Q}$, and $\fkc$ is an integrally closed monomial 
ideal with respect to $z, y$. 
We then claim the following. 

\

\noindent
{\bf Claim} $M_{k_0}S_Q \cong M_{k_0-r+1}(\fkc)$

\

Note that $\fkc=(z^{r-1-j}y^{c_j} \mid 0 \leq j \leq r-1)$ 
where 
$$
\begin{cases}
c_j = \begin{cases}
\frac{j(j-1)}{2}+(\beta-1) & (\beta-1<j) \\
\frac{(j+1)j}{2} & (\beta-1 \geq j)
\end{cases} & \text{for} \ 0 \leq j \leq r-2 \\
c_{r-1}=\frac{(r-1)(r-2)}{2}+(\beta-1). & 
\end{cases}
$$
Thus,
\begin{equation*}
\tilde{M_{k_0-r+1}(\fkc)} = 
\begin{pmatrix}
z^{r-2} & z^{r-3}y & g_{2} & \cdots & g_{j} & \cdots & g_{r-2} & y^{k_0-r+1} & 0 \\
0 & 0 & 0 & \cdots & 0 & \cdots & 0 & z & y^{c_{r-1}-k_0+r-1}
\end{pmatrix}
\end{equation*}
where 
$$g_{j}=z^{r-1-j-1}y^{c_j} \ \ \text{for} \ 2 \leq j \leq r-2. $$ 
Then it is easy to see that 
\begin{equation*}
b_r-r=c_{r-1} \ \text{and} \ b_i-i =c_{i-1} \ \text{for all} \ 3 \leq i \leq r-1. 
\end{equation*}
Thus, $\tilde{M_{k_0-r+1}(\fkc)} \sim \tilde{M_{k_0}S_Q}$ by (\ref{****}) and hence
$M_{k_0}S_Q \cong M_{k_0-r+1}(\fkc)$. Since $k_0-r+1 \in \{1, 2\}$, it follows that 
$M_{k_0-r+1}(\fkc)$ is integrally closed by Theorem \ref{3.6}. 
Thus, $M_{k_0}S_Q$ is integrally closed, and, hence, $M_{k_0}$ is integrally closed. 

Finally, we show the last assertion. 
Note that $b_r-k_0 \geq k_0$ and $b_{r-1} \geq k_0$ because $r \geq 4$. 
Hence, $I_1(M_{k_0})=(x,y^{k_0})$. Note that $xy^{k_{0}} \notin I$ except for the case 
where $k_{0}=r+1, r=4$ and 
$\beta=2$. When $\beta \neq r$. Then $xy^{r} \notin I$. 
If $M_r$ is decomposable, then $(x,y^r) \mid I$ by Observation \ref{4.1}. 
This is a contradiction. When $\beta=r$. Then $xy^{r+1} \notin I$. 
If $M_{r+1}$ is decomposable, then 
$(x,y^{r+1}) \mid I$ by Observation \ref{4.1}. This is a contradiction. 
Therefore, we have the last assertion. \end{proof}

As a consequence, we get the following result in Case II. 

\begin{Theorem}\label{4.12}
Suppose that $(x, y^{\ell}) \mid I$ for any $1 \leq \ell \leq r-1$. Then there exists an integer $1 \leq k \leq r+1$ such that $M_k$ is indecomposable integrally closed with $I(M_k)=I$. 
\end{Theorem}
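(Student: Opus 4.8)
The plan is to reduce Theorem \ref{4.12} to the case analysis already set up in Case II, namely to the two subcases II--1 and II--2, and to apply the results established for those subcases together with the small-$r$ examples. First I would recall that in Case II the hypothesis $(x,y^{\ell}) \mid I$ for every $1 \leq \ell \leq r-1$ forces, by Zariski's unique factorization together with the description of simple monomial factors given before Observation \ref{4.1}, that the Zariski decomposition of $I$ has the form $I=(x,y)(x,y^2)\cdots(x,y^{r-1})\fkb$ where $\fkb$ is a simple complete monomial ideal with $\ord(\fkb)=1$; hence $\fkb=(x^{\alpha},y)$ with $\alpha \geq 1$ (Case II--1) or $\fkb=(x,y^{\beta})$ with $\beta \geq 2$ (Case II--2). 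Here I should remark that $\fkb=(x,y)$ is already covered by $(x,y^{\ell}) \mid I$ with $\ell=1$ being part of the displayed product, so the extra factor is genuinely simple of one of these two shapes; and that the running assumption (\ref{*}), in particular $a_0 \leq b_r$ and $r=\ord(I) \geq 3$, is in force throughout.

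Next I would dispose of Case II--1. If $r=3$, then $I=(x,y)(x,y^2)(x^{\alpha},y)=(x,y)^{r-2}(x^{\alpha},y)(x,y^{\beta})$ with $\beta=1$, which is exactly the shape covered by Proposition \ref{4.5}, so $M_{r-2}=M_1$ is indecomposable integrally closed with $I(M_1)=I$; thus $k=1$ works. If $r=4$, Example \ref{4.8} gives that $M_3$ is indecomposable integrally closed with $I(M_3)=I$, so $k=3$ works. If $r \geq 5$, Theorem \ref{4.9} gives that $M_r$ is indecomposable integrally closed with $I(M_r)=I$, so $k=r$ works; note $k=r \leq r+1$, within the claimed range.

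Then I would handle Case II--2. If $r=3$, Example \ref{4.10} gives that $M_2$ is indecomposable integrally closed with $I(M_2)=I$, so $k=2$ works. If $r=4$, the paper states (just before Theorem \ref{4.11}) that $M_3$ is indecomposable in the same manner as Example \ref{4.10}; alternatively one invokes Theorem \ref{4.11} with $r=4$: if $\beta \neq 4$ then $M_4$ works, and if $\beta=4$ then $M_5$ works, and in either case $k \in \{4,5\} \leq r+1$. For $r \geq 5$ (indeed for all $r \geq 4$), Theorem \ref{4.11} says $M_r$ and $M_{r+1}$ are integrally closed with $I(M_r)=I(M_{r+1})=I$, and $M_r$ is indecomposable when $\beta \neq r$ while $M_{r+1}$ is indecomposable when $\beta=r$; so taking $k=r$ if $\beta \neq r$ and $k=r+1$ if $\beta=r$ produces the required module, with $k \leq r+1$. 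Collecting the cases, in every instance of Case II there is an integer $1 \leq k \leq r+1$ with $M_k$ indecomposable integrally closed and $I(M_k)=I$, which is the assertion.

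I do not expect a genuine obstacle here, since Theorem \ref{4.12} is essentially an assembly of Propositions \ref{4.5}, Examples \ref{4.8} and \ref{4.10}, and Theorems \ref{4.9} and \ref{4.11}; the only point that needs a line of care is the very first reduction step — verifying that the hypothesis $(x,y^{\ell}) \mid I$ for all $1 \leq \ell \leq r-1$ really does pin down the Zariski decomposition to the stated product times a single simple factor of order $1$. This uses that $I$ is a complete monomial ideal so its simple factors are among the special ideals $\bar{(x^p,y^q)}$ with $\gcd(p,q)=1$ (recalled before Observation \ref{4.1}), that each divisibility $(x,y^{\ell}) \mid I$ peels off one such factor by Zariski unique factorization, and a counting of orders ($\ord(I)=r$ and the peeled factors $(x,y),\dots,(x,y^{r-1})$ already account for order $r-1$, leaving a single simple factor of order $1$). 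Once that is in hand, the rest is pure bookkeeping over the ranges of $r$.
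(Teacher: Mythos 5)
Your proof is correct and follows essentially the same route as the paper, which presents Theorem \ref{4.12} as a direct consequence of the Case II analysis (Proposition \ref{4.5}, Examples \ref{4.8} and \ref{4.10}, Theorems \ref{4.9} and \ref{4.11}), with the same initial reduction via Zariski unique factorization and additivity of the order. One small slip: in Case II--1 with $r=3$ the matching with Proposition \ref{4.5} requires $\beta=2$, not $\beta=1$, since $(x,y)^{r-2}(x^{\alpha},y)(x,y^{\beta})=(x,y)(x^{\alpha},y)(x,y^{2})$ must equal $(x,y)(x,y^{2})(x^{\alpha},y)$; this does not affect the conclusion that $M_{r-2}=M_{1}$ works.
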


%%%%%%%%%%%%%%%%%%%%%%%%%%%%%%%%%%%%%%%%%%%%%%%%%%%
\section{Proof of Theorem \ref{main} and examples}
%%%%%%%%%%%%%%%%%%%%%%%%%%%%%%%%%%%%%%%%%%%%%%%%%%%%%

We are now ready to complete a proof of Theorem \ref{main}. 

\begin{proof}[Proof of Theorem \ref{main}]
Let $(R, \fkm)$ be a two-dimensional regular local ring with a regular system of parameters $x, y$. 
Let $I$ be an $\fkm$-primary integrally closed monomial ideal with respect to $x, y$ and let 
$r:=\ord(I) \geq 2$. Then one can write the ideal $I$ as the following form: 
$$I=(x^{a_0}, x^{a_1}y^{b_1}, \dots , x^{a_{r-1}}y^{b_{r-1}}, y^{b_r})$$
where $a_{0} > a_{1} > \dots > a_{r-1} > a_{r}=0$ and $b_{0}=0 < b_{1} < \dots < b_{r}$. 
Without loss of generality, we may assume that $a_{0} \leq b_{r}$. 
Then, since $I$ is integrally closed, one can easily see that $a_{r-1}=1$. 

The case where $r \geq 3$ is done in section 3. Indeed, if the ideal $I$ has no simple factor of 
order $1$ in the Zariski decomposition, then for any $1 \leq k \leq r-1$, 
the associated module $M_{k}$ is indecomposable integrally closed with $I(M_{k})=I$ by Theorem \ref{4.2}. 
Suppose that the ideal $I$ 
has a simple factor of order $1$. If the ideal $I$ does not have a simple factor of the form $(x, y^{\ell})$ for 
some $1 \leq \ell \leq r-1$, then there exists $1 \leq k \leq r-1$ such that the module $M_{k}$ is 
indecomposable integrally closed with $I(M_{k})=I$ by Theorem \ref{4.7}. 
If the ideal $I$ has all the simple factors of 
the form $(x, y^{\ell})$ for all $1 \leq \ell \leq r-1$, 
then there exists $1 \leq k \leq r+1$ such that the module $M_{k}$ 
is indecomposable integrally closed with $I(M_k)=I$ by Theorem \ref{4.12}. 

When $r=2$ and $xy \notin I$. Then 
$$I=(x^{2}, xy^{b'}, y^{b})$$
where $b>b' \geq 2$. If $b<2b'$, then $I=\bar{(x^{2}, y^{b})}$ is simple. 
If $b \geq 2b'$, then $I=(x, y^{b'})(x,y^{b-b'})$. 
Therefore, in each case, $I$ has no simple factor $(x, y)$. 
Consider 
$$M_{1}=\left\langle
\begin{pmatrix}
x & y^{b'} & y & 0 \\
0 & 0 & x & y^{b-1}
\end{pmatrix}
\right\rangle. 
$$
Then $I_1(M_1)=(x,y)$. If $M_{1}$ is decomposable, $(x, y) \mid I$ by Observation \ref{4.1}. 
This is a contradiction. Thus, $M_{1}$ is indecomposable. This completes the proof. 
\end{proof} 

When a given monomial ideal $I$ is integrally closed of $\ord(I) = 2$ and $xy \in I$, that is, 
$$I=(x^{a}, xy, y^{b})=(x^{a-1}, y)(x, y^{b-1}) $$
where $a, b \geq 2$, we do not know whether or not there exists an indecomposable integrally closed $R$-module
$M$ with $I(M)=I$. It would be nice to know this remaining case. 

Moreover, it would be interesting to know whether or not 
Theorem \ref{main} holds true for any $\fkm$-primary complete ideal. 
One can ask the following. 

\begin{Question}
{\rm 
For any $\fkm$-primary complete (not necessarily monomial) ideal $I$ of $\ord(I) \geq 3$ 
in a two-dimensional regular local ring $(R, \fkm)$, can we find an indecomposable integrally closed $R$-module $M$ 
of rank $2$ with $I(M)=I$?
}
\end{Question}

It would be also interesting to study the associated module of rank bigger than $2$ in the sense that for any given complete ideal $I$ in $R$ of $\ord(I)=r \geq 3$ and any integer $2 \leq e \leq r-1$, 
can we construct indecomposable integrally closed modules of rank $e$ whose first Fitting ideal is $I$? 

We close the article with some examples to illustrate our results. 

\begin{Example}\label{5.2}
{\rm 
Let $I=(x^{5}, x^{4}y^{2}, x^{3}y^{3}, x^{2}y^{4}, xy^{6}, y^{7})$. Then the Newton polyhedron ${\rm NP}(I)$ 
is given in Figure \ref{fig5.2}. The set of its vertices is 
$$\{(5,0), (2,4), (0,7)\}$$ 
which is denoted by dots in 
Figure \ref{fig5.2}. 
Thus, the Zariski decomposition of $I$ is 
$$I=\bar{(x^{2}, y^{3})} \cdot \bar{(x^{3}, y^{4})}, $$ 
and, hence, $I$ has no simple factor of order $1$. 
Therefore, the associated modules $M_{1}, M_{2}, M_{3}$ and  $M_{4}$ are (non-isomorphic) 
indecomposable integrally closed modules with 
the first Fitting ideal $I$ by Theorem \ref{4.2}. 
}
\end{Example}

\begin{figure}[h]

\begin{multicols}{2}
\includegraphics[width=6.5cm, clip]{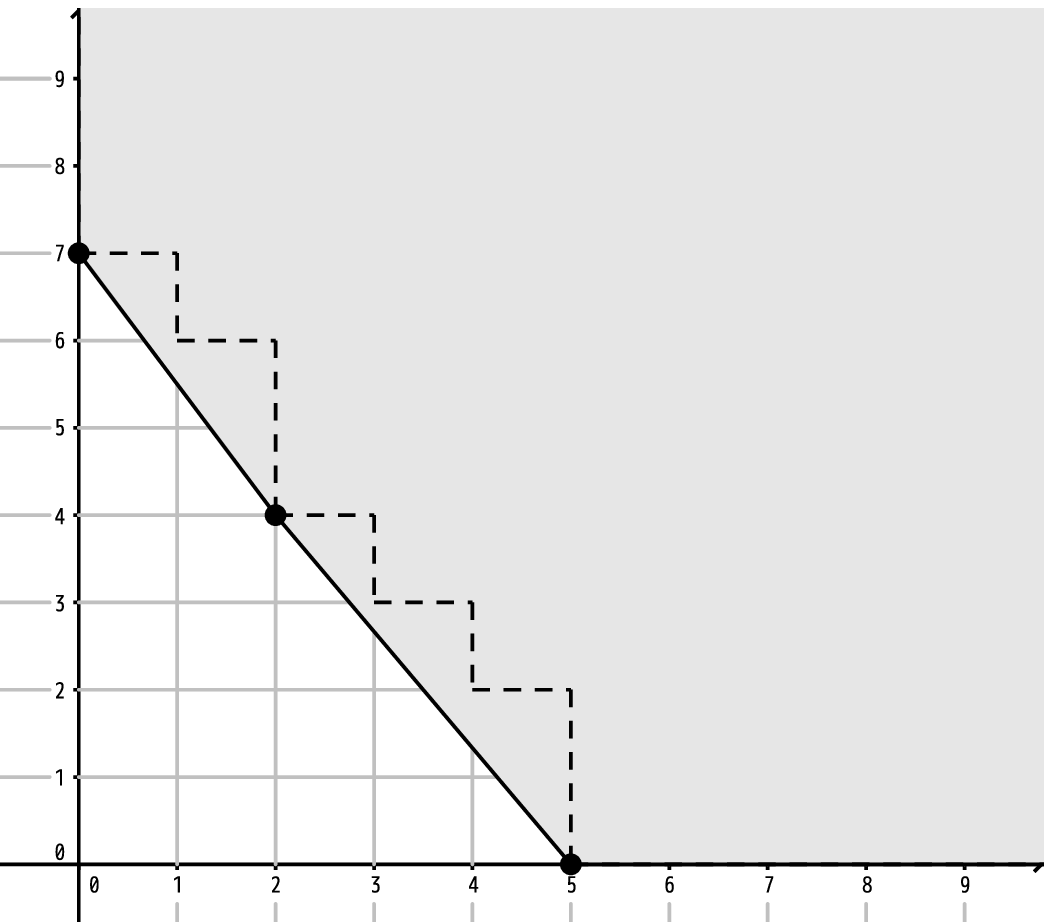}
\caption{Example \ref{5.2} \label{fig5.2}}

\includegraphics[width=6.5cm, clip]{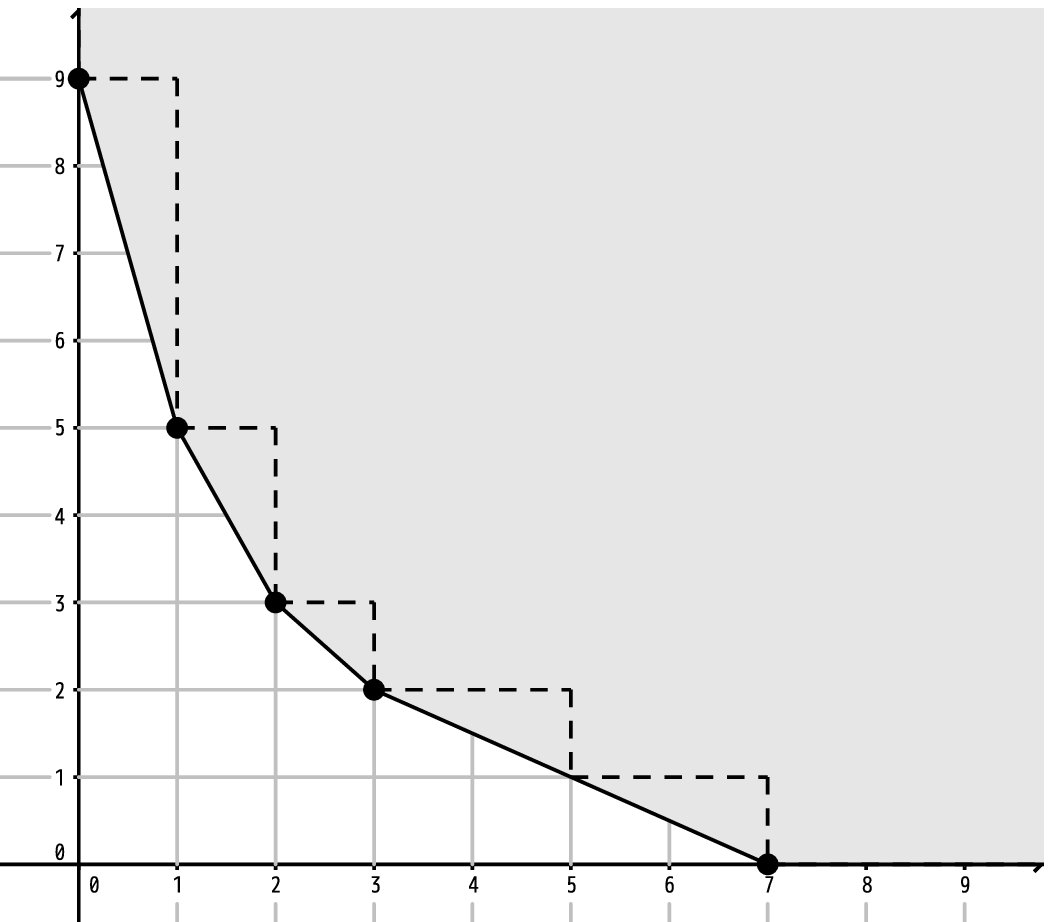}
\caption{Example \ref{5.3} \label{fig5.3}}

\end{multicols}
\end{figure}

\begin{Example}\label{5.3}
{\rm 
Let $I=(x^{7},x^{5}y,x^{3}y^{2}, x^{2}y^{3}, xy^{5},y^{9})$. 
Then the Newton polyhedron ${\rm NP}(I)$ is given in Figure \ref{fig5.3}. The set of its vertices is
$$\{ (7,0),(3,2),(2,3),(1,5),(0,9) \}$$ 
which is denoted by dots in Figure \ref{fig5.3}. 
Thus, the Zariski decomposition of $I$ is 
$$I=(x,y)(x,y^{2})(x,y^{4})(x^{2},y)^{2}. $$ 
Hence, $(x,y)(x,y^{2}) \mid I$ and $(x,y^{3}) \nmid I$. 
Therefore, the associated module $M_{3}$ is indecomposable integrally closed with $I(M_{3})=I$ 
by Observation \ref{4.3}. 
}
\end{Example}

\begin{Example}
{\rm 
Let $\fka=(x,y)(x,y^{2})(x,y^{3})(x,y^{4})$. Consider 
$$
\fkb_{1}=(x^{2}, y), \ 
\fkb_{2}=(x,y^{3}), \ \text{and} \ 
\fkb_{3}=(x,y^{5}). 
$$ 
\begin{enumerate}
\item Let $I=\fka \fkb_{1}$. Then $\ord(I)=5$, and one can apply Theorem \ref{4.9} as $r=5$ and $\alpha=2$. 
Thus, the associated module $M_{5}$ is indecomposable integrally closed with $I(M_{5})=I$. 
\item Let $I=\fka \fkb_{2}$. Then $\ord(I)=5$, and one can apply Theorem \ref{4.11} as $r=5$ and $\beta=3$. 
Thus, the associated module $M_{5}$ is indecomposable integrally closed with $I(M_{5})=I$. 
\item Let $I=\fka \fkb_{3}$. Then $\ord(I)=5$, and one can apply Theorem \ref{4.11} as $r=\beta=5$. 
Thus, the associated module 
$M_{6}$ is indecomposable integrally closed with $I(M_{6})=I$. 
\end{enumerate}
}
\end{Example}

\begin{Example}
{\rm 
Let $I=\fkm^{r}$ where $r \geq 3$. Then $\ord(I)=r$, $(x,y) \mid I$ and $(x,y^{2}) \nmid I$. 
Thus, the associated module 
$M_{2}$ is indecomposable integrally closed with $I(M_{2})=I$ by Observation \ref{4.3}. 
Also, the associated module $M_{r-2}$ is indecomposable integrally closed 
with $I(M_{r-2})=I$ by Proposition \ref{4.5}. 
In fact, by Observation \ref{4.1}, one can see that for any $2 \leq k \leq r-2$, the associated module 
$M_{k}$ is indecomposable integrally closed with $I(M_{k})=I$. 
Moreover, one can show that the module $M_{1}$ is also indecomposable 
integrally closed with $I(M_{1})=I$. 
}
\end{Example}

\begin{proof}
We show the indecomposability of $M_{1}$ by using Buchsbaum-Rim multiplicities. 
Consider a submodule of $M_{1}$: 
$$N=\left\langle \begin{pmatrix}
x^{r-1} & y & 0 \\
0 & x & y^{r-1}
\end{pmatrix}
\right\rangle. $$
Since $I(N) \subset I=I(M)$ is a reduction and $\mu_{R}(N)=3$, 
$N$ is a minimal reduction of $M_{1}$. Thus, we have the equality 
$$e(F/M_{1})=e(F/N)$$ 
for Buchsbaum-Rim 
multiplicities (see \cite[Proposition 3.8]{Ko} for instance). 
Moreover, since $\tilde{N}$ is a parameter matrix in the sense of \cite{BR}, we have the
following equalities.  
$$e(F/M_{1})=e(F/N)=\ell_R(F/N)=\ell_{R}(R/I(N))=\ell_{R}(R/(x^{r}, x^{r-1}y^{r-1}, y^{r}))=r^{2}-1. $$
The second equality follows from \cite[Corollary 4.5]{BR}, and the third one from \cite[2.10]{BV}. 
See also \cite[Theorem 1.3 (2)]{HH}. 
  
Suppose that $M_{1}$ is decomposable. Then $M_{1} \cong \fkm \oplus \fkm^{r-1}$ by Observation \ref{4.1}. 
Consider a submodule of $\fkm \oplus \fkm^{r-1}$: 
$$N'=\left\langle \begin{pmatrix}
x & y & 0 \\
0 & x^{r-1} & y^{r-1}
\end{pmatrix}
\right\rangle. $$
Then $N'$ is a minimal reduction of $\fkm \oplus \fkm^{r-1}$, and we get the following equalities.  
$$e(R/\fkm \oplus R/\fkm^{r-1})=e(F/N')=\ell_{R}(R/I(N'))=\ell_{R}(R/(x^{r}, xy^{r-1}, y^{r}))
=r^{2}-r+1. $$
This contradicts to the assumption $r \geq 3$. 
Thus, we get the indecomposability of $M_{1}$. \end{proof}

%%%%%%%%%%%%%%%%%%%%%%%%%%%%%%%%%%%%%%%%
\section*{Acknowledgments}
%%%%%%%%%%%%%%%%%%%%%%%%%%%%%%%%%%%%%%%%
The author would like to thank Professors Shiro Goto, Yukio Nakamura and all the members of the seminar at 
Meiji University for their encouragement and many helpful suggestions at the early stage of this research. 
He would also like to thank Professor Ngo Viet Trung for informing him some papers on the decomposition of 
complete monomial ideals.

%%%%%%%%%%%%%%%%%%%%%%%%%%%%%%%%
%%%%%%%%%%%  References  %%%%%%%%%%%%%
%%%%%%%%%%%%%%%%%%%%%%%%%%%%%%%%

%%%%%%%%%%%%%%%%%%%%%%%%%%%%%%%%%%%%%%%%%
%%%%%%%%%%%%%%%%%%%%%%%%%%%%%%%%%%%%%%%%%
%%%%%%%%%%%%%%%%%%%%%%%%%%%%%%%%%%%%%%%%%

\end{document}